\documentclass[12pt]{article}

\usepackage{fullpage}
\usepackage{amsmath}
\usepackage{amssymb}
\usepackage{amsthm}
\usepackage{graphicx}
\usepackage{placeins}
\usepackage{extpfeil}
\usepackage{enumerate}
\usepackage{caption}
\usepackage{subcaption}
\usepackage{hyperref}
\usepackage{xcolor}
\usepackage{pbox}
\usepackage[numbers,sort&compress]{natbib}

\usepackage{multirow}

\newcommand{\be}{\begin{equation}} 
\newcommand{\ee}{\end{equation}}
\newcommand{\bee}{\begin{equation*}}
\newcommand{\eee}{\end{equation*}}

\newtheorem{thm}{Theorem}[section]
\newtheorem{theorem}{Theorem} 

\newtheorem{prop}{Proposition}[section]

\newtheorem{rmk}{Remark}[section]
\newtheorem{lem}{Lemma}[section]

\newtheorem{cor}{Corollary}[section]
\newtheorem{example}{Example}[section]
\newtheorem{que}{Question}[section]


\newcommand{\ur}{u_{r}}
\newcommand{\uthe}{u_{\theta}}

\newcommand{\uphi}{u_{\phi}}

\newcommand{\dive}{{\rm div \hspace{0.05cm} }}



\newcommand{\ctthe}{\cot\theta}
\newcommand{\sthe}{\sin\theta}
\newcommand{\cthe}{\cos\theta}

\newcommand{\R}{\mathbb{R}}
\newcommand{\bS}{\mathbb{S}}
\newcommand{\bC}{\mathbb{C}}

\newcommand{\bmx}{\begin{bmatrix}}
\newcommand{\emx}{\end{bmatrix}}

\begin{document}

\title{Recent research on $(-1)$-homogeneous solutions of stationary Navier-Stokes equations}

\author{
	Li Li\footnote{School of Mathematics and Statistics, Ningbo University, 818 Fenghua Road, Ningbo, Zhejiang 315211, China. Email: lili2@nbu.edu.cn. Partially supported by NSFC grant 12271276 and ZJNSF grant LR24A010001.}, 
	Xukai Yan\footnote{Department of Mathematics, Oklahoma State University, 401 Mathematical Sciences Building, Stillwater, OK 74078, USA. Email: xuyan@okstate.edu. Partially supported by NSF Career Award DMS-2441137 and Simons Foundation Travel Support for Mathematicians 962527.}
	}
\date{}
\maketitle

\abstract{
  We make an exposition of recent research on $(-1)$-homogeneous solutions of the three-dimensional incompressible stationary Navier-Stokes equations with singular rays. We also discuss properties of such solutions that are axisymmetric with no swirl, and present graphs illustrating examples that exhibit various typical types of singular behavior.
}

\section{Introduction}\label{sec:intro}

We study the three-dimensional incompressible stationary Navier-Stokes equations, 
\begin{equation}\label{NS}
	\left\{
	\begin{aligned}
		& -\nu\Delta u + ( u \cdot \nabla ) u + \nabla p = 0, \\ 
		& \dive u=0,
	\end{aligned}
	\right.
\end{equation}
where $u: \mathbb{R}^3\to\mathbb{R}^3$ is the velocity vector, $p:\mathbb{R}^3\to\mathbb{R}$ is the pressure and $\nu>0$ is the viscosity constant. These equations are invariant under the scaling $u(x)\to \lambda u(\lambda x)$ and $p(x)\to \lambda^2 p(\lambda x)$ for any $\lambda>0$. It is natural to study solutions which are invariant under this scaling. 
 For such solutions, $u$ is $(-1)$-homogeneous and $p$ is $(-2)$-homogeneous, and we call them $(-1)$-homogeneous solutions according to the homogeneity of $u$. In general, a function $f$ is said to be $(-\alpha)$-homogeneous if $f(x)= \lambda^\alpha f(\lambda x)$ for any $\lambda>0$. 

Let $x=(x_1,x_2,x_3)$ be the standard Euclidean coordinates, and $(r,\theta, \phi)$ be the spherical coordinates, where $r$ is the radial distance from the origin, $\theta\in [0, \pi]$ is the polar angle measured from the positive $x_3$-axis to the vector, and $\phi\in [0, 2\pi)$ is the meridian angle about the $x_3$-axis. A vector field $u$ can be written as
$
	u = u_r e_r + u_\theta e_{\theta} + u_\phi e_{\phi},
$
where
\[
	e_r = \left(
	\begin{matrix}
		\sin\theta\cos\phi \\
		\sin\theta\sin\phi \\
		\cos\theta
	\end{matrix} \right), \hspace{0.5cm}
	e_{\theta} = \left(
	\begin{matrix}
		\cos\theta\cos\phi \\
		\cos\theta\sin\phi \\
		-\sin\theta	
	\end{matrix} \right), \hspace{0.5cm}
	e_{\phi} = \left(
	\begin{matrix}
		-\sin\phi \\ \cos\phi \\ 0
	\end{matrix} \right).
\]
A vector field $u$ is said to be \emph{axisymmetric} if the components $u_r$, $u_{\theta}$ and $u_{\phi}$ are independent of $\phi$, and it is called \emph{no-swirl} if $u_{\phi}\equiv 0$. For $(-1)$-homogeneous solutions $(u, p)$ in $\mathbb{R}^3\setminus\{0\}$, (\ref{NS}) can be reduced to a system of partial differential equations for $(u, p)$ on $\mathbb{S}^2$. For any set $\Omega\subset\bS^2$, a $(-1)$-homogeneous solution $(u, p)$ on $\Omega$ is understood to have been extended to the set $\{x\in \mathbb{R}^3\mid x/|x|\in \Omega\}$ so that $u$ is $(-1)$-homogeneous and $p$ is $(-2)$-homogeneous. We use this convention throughout the paper. 

In 1944, Landau \cite{Landau1944} discovered a 3-parameter family of explicit $(-1)$-homogeneous solutions to the stationary Navier-Stokes equations in $C^\infty(\mathbb{R}^3\setminus\{0\})$, see also \cite{Slezkin1934} and \cite{Squire1951}. These solutions, now known as \emph{Landau solutions}, are axisymmetric with no swirl and have exactly one point singularity at the origin. The expressions of Landau solutions $(u^b, p^b)$ are given by 
\be
  u^b=\frac{2}{r}(\frac{A^2-1}{(A-\cos\theta)^2}-1)e_r-\frac{2\sin\theta}{r(A-\cos\theta)}e_{\theta}, \quad p^b=-\frac{4(A\cos\theta-1)}{r^2(A-\cos\theta)^2}, 
\ee
for some constant $A>1$. The net force associated with Landau solutions is a Dirac mass 
$f=b\delta_0$, where $b=(0, 0, \beta)$ and $\beta=16\pi (A+\frac{1}{2}A^2\ln \frac{A-1}{A+1}+\frac{4A}{3(A^2-1)})$. 
Landau interpreted his solutions as a model of some jet discharged from the origin.

It is natural to ask whether there exist other $(-1)$-homogeneous solutions of (\ref{NS}) except Landau solutions. Tian and Xin proved in \cite{TianXin1998} that all nonzero $(-1)$-homogeneous, axisymmetric solutions of (\ref{NS}) in $C^\infty(\mathbb{R}^3\setminus\{0\})$ must be Landau solutions. \v{S}ver\'{a}k established the following result in 2006:
\begin{theorem}[\cite{Sverak2011}]\label{thm:Sverak}
	All $(-1)$-homogeneous nonzero solutions of (\ref{NS}) in $C^2(\mathbb{R}^3\setminus\{0\})$ are Landau solutions.
\end{theorem}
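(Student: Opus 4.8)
The plan is to show that any such solution solves the Navier--Stokes system with a \emph{point force} concentrated at the origin, and then to classify all $(-1)$-homogeneous solutions carrying a prescribed point force. \emph{Regularity and the point force.} Away from the origin the system (\ref{NS}) is a nonlinear elliptic system with smooth coefficients, so elliptic bootstrapping upgrades a $C^2(\mathbb{R}^3\setminus\{0\})$ solution to $C^\infty(\mathbb{R}^3\setminus\{0\})$. Since $u$ is $(-1)$-homogeneous and $p$ is $(-2)$-homogeneous, $|u(x)|\le C|x|^{-1}$ and $|p(x)|\le C|x|^{-2}$, hence $u$, $u\otimes u$ and $p$ all lie in $L^1_{\mathrm{loc}}(\mathbb{R}^3)$ and $(u,p)$ extends to a distributional pair on all of $\mathbb{R}^3$. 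The distribution
\[
  f:=-\nu\Delta u+(u\cdot\nabla)u+\nabla p
\]
vanishes on $\mathbb{R}^3\setminus\{0\}$, so it is supported at the origin; as a sum of distributional derivatives of homogeneous distributions, each of its three terms is homogeneous of degree $-3$, whence so is $f$. A distribution supported at a point is a finite combination $\sum_\alpha c_\alpha\partial^\alpha\delta_0$, and $\partial^\alpha\delta_0$ is homogeneous of degree $-3-|\alpha|$; matching degree $-3$ forces $|\alpha|=0$. Hence $f=b\,\delta_0$ for some $b\in\mathbb{R}^3$, which is precisely the net force transmitted through any sphere centered at the origin.

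\emph{Reduction by symmetry and existence.} By the rotational invariance of (\ref{NS}) I may rotate so that $b=\beta e_3$ with $\beta\ge 0$, the reflection $x_3\mapsto-x_3$ accounting for the sign. A direct computation with the Landau formulas shows that $A\mapsto\beta(A)$ maps $(1,\infty)$ continuously onto $(0,\infty)$, with $u^b\to 0$ as $A\to\infty$; together with the reflection and a rotation this exhibits a Landau solution for \emph{every} $b\in\mathbb{R}^3$, the value $b=0$ corresponding to $u\equiv 0$. Writing $u=\tfrac1r U(\omega)$ and $p=\tfrac1{r^2}P(\omega)$ with $\omega\in\bS^2$ reduces (\ref{NS}) to a second-order nonlinear elliptic system for the profile $(U,P)$ on the compact manifold $\bS^2$, and the theorem becomes the statement that this system has, for each prescribed force, a \emph{unique} solution, necessarily the Landau profile.

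\emph{Uniqueness by continuation.} I would organize the uniqueness as a continuation argument in $\beta$ along the Landau branch. For small $|b|$ the Stokeslet lies in $L^{3,\infty}(\mathbb{R}^3)$ (where $1/|x|$ lives) and the quadratic term is a contraction there, so the Landau solution is the unique \emph{small} solution. Let $S\subset[0,\infty)$ be the set of $\beta$ for which the only $(-1)$-homogeneous solution with force $\beta e_3$ is the Landau solution $U^\beta$. Openness of $S$ would follow from the implicit function theorem applied to the profile system on $\bS^2$ at every $\beta$ where the linearization about $U^\beta$ is invertible; closedness would follow from a priori bounds on $\|U\|_{L^\infty(\bS^2)}$ in terms of $|b|$ together with elliptic compactness on $\bS^2$. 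Proving $S=[0,\infty)$ then yields the theorem. One might instead hope to show directly that $u$ is axisymmetric about the axis of $b$ and invoke the Tian--Xin classification of axisymmetric $(-1)$-homogeneous solutions, but I do not see a symmetry argument that applies to the full nonlinear system, so I would rely on the continuation.

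\emph{Main obstacle.} The crux is the \emph{non-perturbative} regime: one must exclude any solution -- in particular a non-axisymmetric one -- bifurcating off the Landau branch as $\beta$ grows, i.e. show that the linearized operator about each Landau solution has \emph{trivial kernel} on $\bS^2$ for all $\beta\ge 0$, and simultaneously establish the a priori $L^\infty$ bound that keeps the continuation compact (this same bound is what rules out \emph{large} solutions for small $|b|$, which the contraction argument alone does not control). I would attack the kernel-triviality by expanding the linearized system in Fourier modes $e^{ik\phi}$ in the meridian angle, reducing it to a family of ordinary differential systems in $\theta$, and showing -- using the explicit Landau profile -- that none of these admits a nontrivial force-free homogeneous solution. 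This uniform spectral statement, not the comparatively soft regularity and point-force reductions above, is where essentially all of the difficulty lies.
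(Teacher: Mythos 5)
Your reduction to a point force is correct and essentially standard: homogeneity places $u$, $u\otimes u$ and $p$ in $L^1_{\mathrm{loc}}(\mathbb{R}^3)$, the distribution $f$ is supported at the origin and homogeneous of degree $-3$ (provided you interpret $(u\cdot\nabla)u$ distributionally as $\dive(u\otimes u)$, which the divergence-free condition allows), so $f=b\,\delta_0$; and the Landau branch indeed realizes every $b\in\mathbb{R}^3$. From that point on, however, what you have written is a program rather than a proof, and you acknowledge this yourself. The continuation scheme needs two inputs: (i) triviality of the kernel of the linearized operator about the Landau profile $U^\beta$ on $\bS^2$ for \emph{every} $\beta\ge 0$, and (ii) an a priori bound on $\|U\|_{L^\infty(\bS^2)}$ in terms of $|b|$ yielding compactness of the solution set (you also need (ii) already at small $\beta$, since the contraction argument only gives uniqueness among \emph{small} solutions). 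Neither is established in your proposal, and these are not routine technical steps: together they constitute the entire non-perturbative content of Theorem \ref{thm:Sverak}. No a priori $L^\infty$ bound for $(-1)$-homogeneous solutions with prescribed point force follows from soft elliptic estimates -- excluding large solutions is precisely the hard part -- and the mode-by-mode ODE analysis you propose for the kernel is not known to close. So the proposal has a genuine gap exactly where the theorem's difficulty lies.

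It is worth contrasting this with the route the paper attributes to \cite{Sverak2011} (recalled in Section \ref{sec2_1_3}), which is entirely different and bypasses both obstacles at once. A $(-1)$-homogeneous solution smooth on $\bS^2$ gives rise to a function $\varphi\in C^{\infty}(\bS^2)$ solving the Liouville equation (\ref{eq:liouville:S2}), with $u=\nabla_{\mathbb{S}^2}\varphi-\Delta_{\mathbb{S}^2}\varphi\, e_r$. By the classical Liouville formula (\ref{eq:liouville:xi}), every solution of (\ref{eq:liouville:S2}) is generated by a locally univalent meromorphic function $f$, and smoothness of $u$ on the whole sphere forces $f(z)=az$, which produces exactly the Landau family. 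This complex-analytic classification replaces, in one stroke, the uniform spectral non-degeneracy and the a priori compactness that your continuation argument would require; conversely, if you could actually prove your items (i) and (ii), your argument would give an independent, real-variable proof, but as it stands they are assumptions, not results.
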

In the same paper, \v{S}ver\'{a}k also proved that there is no nonzero $(-1)$-homogeneous solution of the incompressible stationary Navier-Stokes equations in $C^{2}(\mathbb{R}^n\setminus\{0\})$ for $n\ge 4$. In dimension two, he characterized all such solutions satisfying a zero flux condition. Homogeneous solutions of (\ref{NS}) have also been studied in various other works, see, e.g. \cite{CKPW2022, Goldshtik1960, Gusarov2020, JLL2021, KT2021, LZZ2023, PP1985a, PP1985b, PP1985c, Serrin1972, Slezkin1934, Squire1951, Wang1991, Yatseyev1950, ZZ2021}. There have also been works on homogeneous solutions of Euler's equations, see \cite{Abe2024, LS2015, Shvydkoy2018} and the references therein. 

A natural follow-up problem is to study $(-1)$-homogeneous solutions of (\ref{NS}) with possible finitely many singular rays. To this end, we consider solutions of (\ref{NS}) in $C^{2}(\mathbb{S}^2\setminus\{P_1,..., P_m\})$ where $P_1,..., P_m$ are finitely many points on $\mathbb{S}^2$.
Such solutions are subject to less restrictive symmetry and regularity conditions, making related questions—such as their classification and stability—more challenging. Below are several central problems in the study of such solutions:

\medskip

\noindent\textbf{Questions:}
\begin{enumerate}
  \item \emph{Existence and classification}: Let $m\ge 1$ and $P_1,..., P_m$ be arbitrary points on $\mathbb{S}^2$. Does there exist any $(-1)$-homogeneous solution of (\ref{NS}) in $C^{2}(\mathbb{S}^2\setminus\{P_1,..., P_m\})$? If yes, how can we classify such solutions?
  \item \emph{Singularity behavior}: What is the asymptotic behavior of $(-1)$-homogeneous solutions of (\ref{NS}) near a singular point $P$ on $\bS^2$? 
  \item \emph{Vanishing viscosity limit}: What is the limiting behavior of $(-1)$-homogeneous solutions of (\ref{NS}) as the viscosity $\nu\to 0$? How do they converge or not converge to solutions of Euler equations? 
  \item \emph{Asymptotic stability}: Are $(-1)$-homogeneous solutions stable under perturbations? In particular, under what conditions such solutions are asymptotically stable, and in what norms or perturbation settings? 
\end{enumerate}

The classification of $(-1)$-homogeneous solutions of (\ref{NS}) in $C^{\infty}(\bS^2)$ (Theorem \ref{thm:Sverak}) in general does not extend to solutions with singularities on $\bS^2$. To study the above problems, a natural first step is to investigate solutions that are axisymmetric. 

The existence and classification of $(-1)$-homogeneous axisymmetric solutions in $C^2(\mathbb{S}^2\setminus\{S, N\})$, where $S$ is the south pole and $N$ is the north pole, were studied in \cite{LLY2018a}-\cite{LLY2019a}. 
In \cite{LLY2018a}, all such no-swirl solutions in $C^2(\mathbb{S}^2\setminus\{S\})$ were classified, and 
 existence and nonexistence results of such solutions with nonzero swirl in $C^2(\mathbb{S}^2\setminus\{S\})$ were established. In \cite{LLY2018b},  a complete classification of all $(-1)$-homogeneous axisymmetric no-swirl solutions in $C^2(\mathbb{S}^2\setminus\{S, N\})$ was provided. In \cite{LLY2019a}, existence and non-existence results for $(-1)$-homogeneous axisymmetric solutions in $C^2(\mathbb{S}^2\setminus\{S, N\})$ with nonzero swirl were established.
%
Additionally, \cite{LLY2018a} derived the asymptotic expansions of all local $(-1)$-homogeneous axisymmetric solutions of (\ref{NS}) near a singular ray. In \cite{LLY2024}, an optimal removable singularity result for $(-1)$-homogeneous solutions near a singular ray was established without assuming axisymmetry. 
The vanishing viscosity limit of $(-1)$-homogeneous axisymmetric no-swirl solutions of (\ref{NS}) in $C^2(\mathbb{S}^2\setminus\{S, N\})$ was analyzed in \cite{LLY2019b}. 
Furthermore, the asymptotic stability of the least singular axisymmetric no-swirl solutions under $L^2$-perturbations was proved in \cite{LY2021}, while the asymptotic stability of Landau solutions under $L^2$-perturbations was established in \cite{Karch2011}. 

In this paper, we provide an exposition to recent developments in the study of $(-1)$-homogeneous solutions of (\ref{NS}) with singular rays, along with make some remarks and discussions. 
In Section \ref{sec_2}, we  discuss the above mentioned results in \cite{LLY2018a}-\cite{LY2021}. 
In Section \ref{sec_3}, we further investigate the behavior of $(-1)$-homogeneous axisymmetric no-swirl solutions of (\ref{NS}), and provide graphical illustrations to offer some intuitive description of them. 

\section{Recent study on homogeneous solutions of (\ref{NS})}\label{sec_2}

\subsection{Existence and classification}\label{sec2_1}

The existence and classification  of $(-1)$-homogeneous axisymmetric solutions of (\ref{NS}) in $C^2(\mathbb{S}^2\setminus\{S, N\})$ were studied in  \cite{LLY2018a, LLY2018b, LLY2019a}. 
For these solutions, the system (\ref{NS}) on $\mathbb{S}^2$ takes the form
\begin{equation}\label{eq_homo}
	\left\{
	\begin{aligned}
		& \nu \frac{d^2 \ur}{d\theta^2} + (\nu \ctthe - \uthe) \frac{d \ur}{d\theta} + \ur^2 +\uthe^2 +\uphi^2 +2p = 0; \\
		& \frac{d}{d\theta} (\frac{1}{2}\uthe^2 - \nu \ur + p) = \cot\theta \uphi^2; \\
		& \nu \frac{d}{d \theta}(\frac{d \uphi}{d \theta} + \cot\theta \uphi) - \uthe (\frac{d \uphi}{d \theta} + \cot\theta \uphi) =0; \\
		& \ur + \frac{d \uthe}{d \theta} + \ctthe \uthe = 0. \quad \quad \mbox{(divergence free condition)}\\   
	\end{aligned}
	\right.
\end{equation}
If we further assume that $u$ has no swirl, i.e. $u_{\phi}\equiv 0$, then (\ref{NS}) is reduced to 
\begin{equation}\label{eq_homo_1}
  \nu (\frac{du_{\theta}}{d\theta}-\cot\theta u_{\theta})-\frac{1}{2}u_{\theta}^2=\frac{a\cos^2\theta+b\cos\theta+c}{\sin^2\theta}
\end{equation}
for some constants $a, b, c\in\R$, 
and $u_{r}$ and $p$ can be determined by 
\begin{equation}\label{eq_up}
  u_r = - \frac{d u_{\theta}}{d \theta} - u_{\theta} \cot \theta, \quad p=\nu u_r-\frac{1}{2}u_{\theta}^2 + \textrm{const}, 
  \quad \textrm{ on }\mathbb{S}^2. 
\end{equation}
By rescaling $\tilde{u}=u/\nu$, (\ref{NS}) is equivalent to the same equation of $\tilde{u}$ with $\nu=1$. For convenience,  \cite{LLY2018a, LLY2018b, LLY2019a} focused on the case $\nu=1$ when studying the existence and classification of solutions. In this paper, we present all the results for general $\nu$ to more clearly illustrate the behavior of $u$ in terms of $\nu$. 
In our study, the following change of variables and functions played an important role.
\begin{equation}\label{Va}
  y:=\cthe, \quad U_r := u_r \sthe, \quad U_\theta := u_\theta \sthe, \quad U_\phi:= u_\phi \sthe. 
\end{equation}
Then (\ref{NS}) for $(-1)$-homogeneous axisymmetric solutions is converted to 
\begin{equation}\label{eq_NSx2}
  \left\{
  \begin{aligned}
	  & \nu (1-y^2) U_\theta' + 2\nu y U_\theta +\frac{1}{2} U_\theta^2 + \int \int \int \frac{2 U_\phi(s) U_\phi'(s)}{1-s^2} ds dt dl = b_1 y^2 + b_2 y + b_3, \\
	  & \nu (1-y^2) U_\phi'' + U_\theta U_\phi' = 0,
  \end{aligned}
  \right. 
\end{equation}
where $b_1,b_2, b_3$ are some constants, $y\in (-1,1)$, 
and the derivatives are in $y$. The divergence free condition is equivalent to 
$
  u_r=U'_{\theta}(y). 
$
This change of variables had been used by Serrin \cite{Serrin1972}. 

\subsubsection{Classification of axisymmetric no-swirl solutions}\label{sec2_1_1}

\noindent \textbf{a. Classification of axisymmetric no-swirl solutions in $\bS^2\setminus\{S\}$}
\medskip

In \cite{LLY2018a}, we classified all $(-1)$-homogeneous axisymmetric no-swirl solutions of (\ref{NS}) in $C^{\infty}(\bS^2\setminus\{S\})$ as a two-parameter family with explicit expressions. 
Let $\tau, \sigma\in \R$, and
\be
  \mathcal{I}_{\nu} := \{(\tau,\sigma)\in\mathbb{R}^2\mid \tau\le 2\nu, \sigma < \nu-\tau/4\}\cup \{(\tau,\sigma)\mid \tau\ge 2\nu, \sigma =\tau/4\},
\ee
\begin{thm}[\cite{LLY2018a}]\label{thmLLY1_1}
  For every $ (\tau, \sigma)\in \mathcal{I}_{\nu}$, there exists a unique $(-1)$-homogeneous axisymmetric no-swirl solution $(u^{\tau, \sigma}, p^{\tau, \sigma})\in C^{\infty}(\mathbb{S}^2\setminus\{S\})$ to (\ref{NS}) on $\bS^2\setminus\{S\}$, up to a constant translation of $p$, satisfying $\lim_{\theta\to \pi^-} u^{\tau, \sigma}_{\theta}\sin\theta=\tau$ and $\lim_{\theta\to 0^+}u^{\tau, \sigma}_{\theta}/\sin\theta=\sigma$.
  Moreover, these are all the axisymmetric no-swirl solutions in $C^2(\mathbb{S}^2\setminus\{S\})$.	The explicit expressions of these solutions are as follows. 
	\begin{equation}\label{eq_temp11}
		u^{\tau, \sigma}_\theta = 
		\left\{
		\begin{array}{ll}
			\frac{\displaystyle \nu (1-\cos\theta)}{\displaystyle \sin\theta}\left(1- b - \frac{\displaystyle 2 b(1-2\sigma/\nu-b)}{\displaystyle(1-2\sigma/\nu+b) (\frac{1+\cos\theta}{2})^{-b} + 2\sigma/\nu-1 + b} \right), & \tau<2\nu, \\
			\frac{\displaystyle \nu(1-\cos\theta)}{\displaystyle \sin\theta} \left( 1+ \frac{\displaystyle 2 (1-2\sigma/\nu) }{ \displaystyle (1-2\sigma/\nu) \ln \frac{1+\cos\theta}{2} - 2 } \right), & \tau=2\nu, \\
			\frac{\displaystyle \nu(1+b)(1-\cos\theta)}{\displaystyle \sin\theta}, & \tau>2\nu, 
		\end{array}
		\right. 
	\end{equation}
	where $(\tau, \sigma)\in \mathcal{I}_{\nu}$ and $b := |1-\tau/(2\nu)|$. 
\end{thm}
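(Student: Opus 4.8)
The plan is to reduce the no-swirl system to a single scalar first-order equation, linearize that equation exactly, and then extract existence, the closed-form expressions, and the admissible parameter set from the indicial data at the two poles. Since equations (\ref{eq_homo_1}), (\ref{eq_up}) and (\ref{eq_NSx2}) are already available, I would first recall why the problem is scalar: under $\uphi\equiv 0$ the second equation of (\ref{eq_homo}) integrates to $\tfrac12\uthe^2-\nu\ur+p=\text{const}$ (the pressure formula in (\ref{eq_up})), and combining this with the first equation and the divergence-free relation $\ur=-d\uthe/d\theta-\ctthe\,\uthe$, then integrating, yields the first integral (\ref{eq_homo_1}) with three constants $a,b,c$. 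In the variables (\ref{Va}) this is exactly the no-swirl specialization of the first line of (\ref{eq_NSx2}): a Riccati equation for $U_\theta(y)$ on $(-1,1)$. Because the full solution is recovered from $U_\theta$ via $\ur=U_\theta'(y)$ and (\ref{eq_up}), it suffices to classify solutions of this scalar ODE.

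The key step is an exact linearization. Writing the Riccati equation as $U_\theta'=q_2U_\theta^2+q_1U_\theta+q_0$ with $q_2=-1/(2\nu(1-y^2))$, I would substitute $U_\theta=2\nu(1-y^2)\,w'/w$. This kills the quadratic term, and because $q_1+q_2'/q_2\equiv 0$ for this particular $q_1,q_2$, the linear equation collapses to the strikingly simple
\begin{equation*}
  w''-\frac{b_1y^2+b_2y+b_3}{2\nu^2(1-y^2)^2}\,w=0,
\end{equation*}
whose only singularities are regular singular points at $y=\pm1$. Its solution space is spanned by powers of $(1\pm y)$, with a logarithm in the resonant case, and this is precisely the origin of the $\bigl(\tfrac{1+\cthe}{2}\bigr)^{-b}$ and $\ln\tfrac{1+\cthe}{2}$ terms appearing in (\ref{eq_temp11}).

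Next I would impose regularity. The requirement $(u,p)\in C^2(\bS^2\setminus\{S\})$ forces smoothness at the north pole $y=1$: evaluating the Riccati equation there gives $U_\theta(1)=0$, hence $b_1+b_2+b_3=0$, which reduces the quadratic right-hand side to a single free parameter, while the finite limit $\lim_{\theta\to0^+}\uthe/\sthe=\sigma$ selects the admissible local branch at $y=1$ and identifies $\sigma$ with the leading coefficient of $U_\theta/(1-y^2)$ there. At the south pole $y=-1$ the indicial equation has exponents $s$ and $1-s$ with $U_\theta\to 4\nu s$, so that $\tau:=\lim_{\theta\to\pi^-}\uthe\sthe=4\nu s$ and the exponent gap is $|2s-1|=|1-\tau/(2\nu)|=b$; the three regimes $\tau<2\nu$, $\tau=2\nu$, $\tau>2\nu$ correspond exactly to distinct exponents, a repeated exponent (producing the logarithm), and the one-sided branch selection recorded in (\ref{eq_temp11}).

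Finally I would pin down $\mathcal{I}_\nu$ by demanding that the closed-form $U_\theta$ remain finite and smooth for all $\theta\in(0,\pi)$, equivalently that $w$ have no zero in the open interval so the Riccati solution does not blow up; tracking the sign of the denominators in (\ref{eq_temp11}) should produce the constraint $\sigma<\nu-\tau/4$ for $\tau\le 2\nu$ and the rigidity $\sigma=\tau/4$ for $\tau\ge 2\nu$, and establish that $(\tau,\sigma)\mapsto(u^{\tau,\sigma},p^{\tau,\sigma})$ is a bijection onto $\mathcal{I}_\nu$. I expect the main obstacle to be exactly this last step: proving simultaneously that every $(\tau,\sigma)\in\mathcal{I}_\nu$ gives a globally regular solution and that none exists outside $\mathcal{I}_\nu$ demands a careful nonvanishing analysis of the explicit denominators, together with the delicate point that mere $C^2$ regularity (not $C^\infty$) must suffice both to justify the integration leading to (\ref{eq_homo_1}) and to control the borderline behavior at the north pole, where the two indicial exponents differ by an integer.
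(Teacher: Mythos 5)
Your strategy---reduce to the scalar Riccati equation, linearize via $U_{\theta}=2\nu(1-y^2)w'/w$, and read everything off from the indicial data at $y=\pm1$---is viable, and it is in fact exactly the machinery this survey develops in Section \ref{sec_3} (the substitution (\ref{eqH_0_1}) and the linear equation (\ref{eqH_0_2}) of Lemma \ref{lem3_1}); note the survey itself does not reprove Theorem \ref{thmLLY1_1} but cites \cite{LLY2018a}. There is, however, a genuine gap in your north-pole analysis, and it sits precisely under the step your explicit formulas depend on. Smoothness at $N$ imposes \emph{two} constraints on the quadratic right-hand side $P$, not one: $U_\theta(1)=0$ gives $P(1)=0$, which is a single linear condition and therefore leaves a \emph{two}-parameter family of $P$'s (your claim that it leaves ``a single free parameter'' is miscounted), and a second condition $P'(1)=0$ is forced by the requirement that $\lim_{\theta\to 0^+}u_\theta/\sin\theta$ be finite. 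Indeed, when $P(1)=0$ but $P'(1)\neq 0$, the point $y=1$ is a regular singular point of (\ref{eqH_0_2}) with exponents $0$ and $1$, and the Frobenius obstruction for the exponent-$0$ branch is a nonzero multiple of $P'(1)$: every solution with $w(1)\neq 0$ carries a logarithm, so $u_\theta/\sin\theta=2\nu w'/w$ diverges like $\ln(1-y)$, while the analytic branch with $w(1)=0$ gives $U_\theta(1)=-4\nu\neq 0$, i.e.\ $u_\theta$ itself blows up. Hence for such $P$ there is \emph{no} admissible branch to select; one must instead conclude $P'(1)=0$.

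This missing constraint is not cosmetic: it is exactly what validates your key structural claim. Only when $P\propto(1-y)^2$ does (\ref{eqH_0_2}) collapse to the Euler equation $2\nu^2(1+y)^2w''=-c_3\,w$, whose solution space is \emph{globally} spanned by the pure powers $(1+y)^{(1\pm b)/2}$ (with a logarithm in the resonant case $b=0$); for the two-parameter class you actually reduced to, the fundamental solutions are genuinely hypergeometric (this is the content of Lemma \ref{lem3_2}), and the closed forms (\ref{eq_temp11}) would not emerge from your argument. Once $P'(1)=0$ is added, the rest of your plan does go through essentially as written: with $w=A z^{s_1}+Bz^{s_2}$, $z=\frac{1+y}{2}$, $s_{1,2}=\frac{1\pm b}{2}$, one has $\tau=4\nu s$ at $y=-1$, attaining the larger exponent $s_1$ forces $B=0$ and hence the unique linear solution $U_\theta=\frac{\tau}{2}(1-y)$ with $\sigma=\tau/4$ (the rigidity for $\tau\ge 2\nu$), while for $\tau<2\nu$ the nonvanishing of $w$ on $(0,1]$ together with $\sigma=\nu\,\frac{As_1+Bs_2}{A+B}$ yields exactly the range $\sigma<\nu s_1=\nu-\tau/4$ and the claimed bijection with $\mathcal{I}_\nu$.
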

For each $(\tau, \sigma)\in \mathcal{I}_{\nu}$, $u^{\tau, \sigma}_{\theta}$ is given by (\ref{eq_temp11}), $u^{\tau, \sigma}_{\phi}=0$, $u^{\tau, \sigma}_{r}$ and $p^{\tau, \sigma}$ can be determined by (\ref{eq_up}). 
In particular, Landau solutions are given by $\{(u^{\tau, \sigma}, p^{\tau, \sigma})| \tau=0, \sigma\in (-\infty, 0)\cup (0,\nu)\}$, where $u^{0, \sigma}_{\theta}=\frac{2\nu \sin\theta}{\cos\theta+2\nu/\sigma-1}$. See Figure \ref{fig: tausigma} for the parameter set $\mathcal{I}_{\nu}\subset \R^2$. 
\begin{figure}[!htb]
	\centering
	\includegraphics[height=5.5cm]{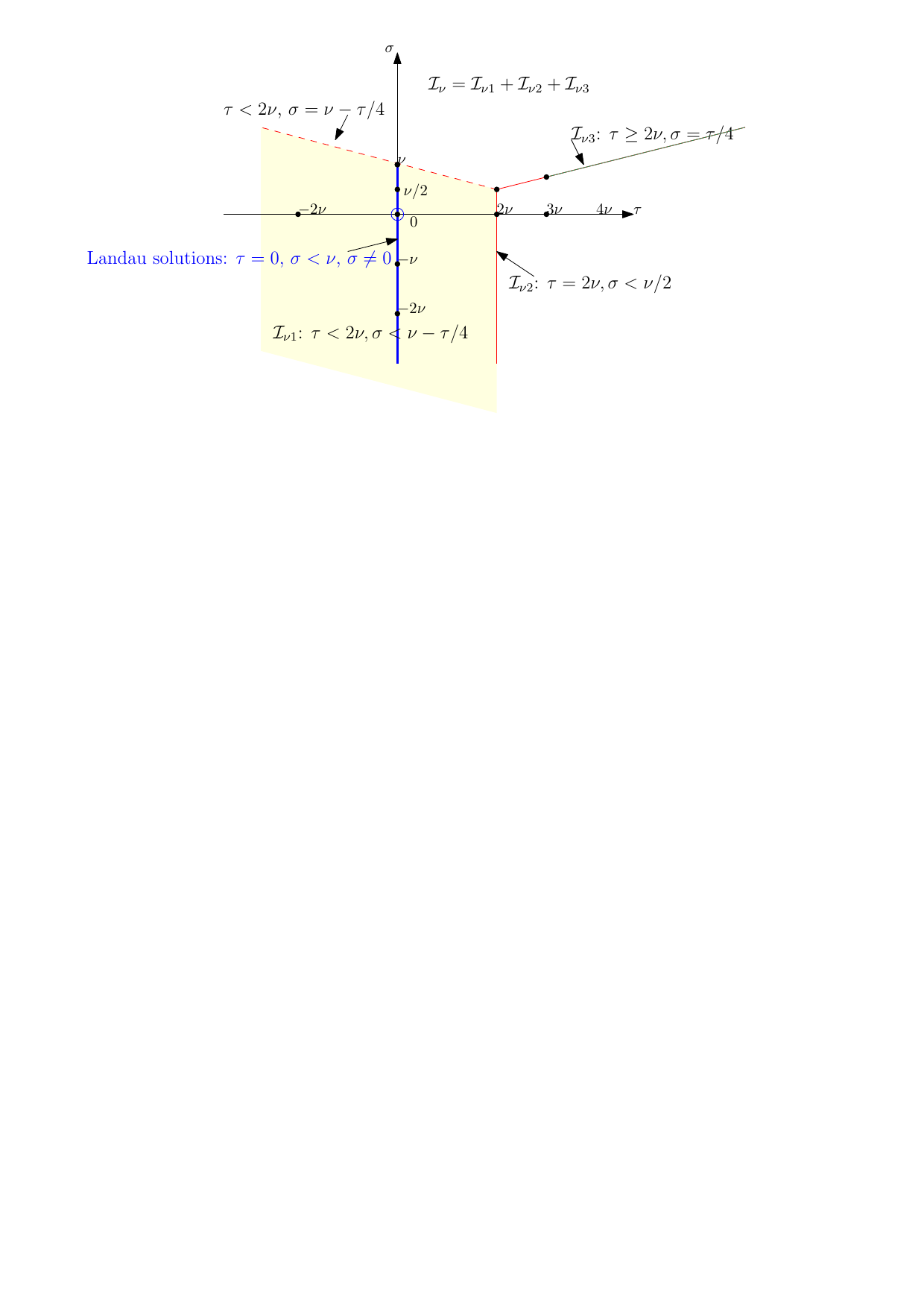}
	\vspace{-0.5cm}
	\caption{\small{Parameter set $\mathcal{I}_{\nu}\subset \R^2$ of $(\tau, \sigma)$.}}
	\label{fig: tausigma}
\end{figure}

\noindent \textbf{b. Classification of axisymmetric no-swirl solutions in $\bS^2\setminus\{S, N\}$}
 
\medskip
 
The classification of $(-1)$-homogeneous axisymmetric no-swirl solutions of (\ref{NS}) in $C^2(\bS^2\setminus\{S, N\})$ was established in \cite{LLY2018b}. 
Using the change of variables (\ref{Va}), for $\nu>0$, equation (\ref{eq_homo_1}) for no-swirl solutions is converted to 
\be \label{eq:NSE}
	\nu (1-y^2)U_\theta' + 2\nu y U_\theta + \frac{1}{2}U_\theta^2 = P_c(y):=c_1(1-y)+c_2(1+y)+c_3(1-y^2), 
\ee
for $-1<y<1$. 
For solutions with two singularities $S$ and $N$ on $\bS^2$, (\ref{eq:NSE}) cannot be solved explicitly as in Theorem \ref{thmLLY1_1}. Equation (\ref{eq:NSE}) is a Ricatti-type ODE, which can be converted to a second order linear ODE,  whose solutions can be expressed in terms of hypergeometric functions. 
We will discuss such representations of solutions of (\ref{eq:NSE}) in Section \ref{sec_3}. 
In \cite{LLY2018b} with Y.Y. Li, the authors analyzed the nonlinear equation (\ref{eq:NSE}) and classified all $(-1)$-homogeneous axisymmetric no-swirl solutions of (\ref{NS}) on $\bS^2\setminus\{S, N\}$ as a four-parameter family. 
For $\nu\ge 0$, define
$
  \bar{c}_3(c_1,c_2; \nu)=-\frac{1}{2}(\sqrt{\nu^2+c_1}+\sqrt{\nu^2+c_2})(\sqrt{\nu^2+c_1}+\sqrt{\nu^2+c_2}+2\nu),
$
and let
\begin{equation}\label{eq_J}
  J_\nu:=\{c\in \mathbb{R}^3\mid c=(c_1,c_2,c_3), c_1\ge -\nu^2, c_2\ge -\nu^2, c_3\ge \bar{c}_3(c_1,c_2;\nu)\}.
\end{equation}
In \cite{LLY2018b}, it was proved that there exist solutions of (\ref{eq:NSE}) in $C^{1}(-1, 1)$ if and only if $c\in J_{\nu}$. 
For each $c$ in $J_{\nu}$, there exist an upper solution $U^+_{\theta}(c)$ and a lower solution $U^-_{\theta}(c)$ in $C^1(-1, 1)$ of (\ref{eq:NSE}), and all other solutions  in $C^1(-1, 1)$  lie between them, foliating the region bounded by $U^{\pm}_{\theta}(c)$. 
Base on Theorem 1.1 and Theorem 1.3 in \cite{LLY2018b}, we summarize the following results. 

\begin{thm}[\cite{LLY2018b}]\label{thmLLY2_0}
	For each $\nu>0$, there exist $\gamma^-(c),\gamma^+(c)\in C^0(J_{\nu}, \mathbb{R})$, satisfying $\gamma^-(c)<\gamma^+(c)$ if $c_3>\bar{c}_3(c_1,c_2)$, and $\gamma^-(c)=\gamma^+(c)$ if $c_3=\bar{c}_3(c_1,c_2)$, such that for every $c$ in $J_{\nu}$ and $\gamma^-(c)\le \gamma \le \gamma^+(c)$, there exists a unique $(-1)$-homogeneous axisymmetric no-swirl solution $(u^{c, \gamma}, p^{c, \gamma})\in C^{\infty}(\mathbb{S}^2\setminus\{S,N\})$ of (\ref{NS}), 
	up to a constant translation of $p$, such that $U_{\theta}^{c, \gamma}=u_{\theta}^{c,\gamma}\sin\theta$ satisfies (\ref{eq:NSE}) on $(-1, 1)$, and $u_{\theta}^{c,\gamma}(\frac{\pi}{2})=\gamma$. 
  Moreover, these are all $(-1)$-homogeneous axisymmetric no-swirl solutions of (\ref{NS}) on $\mathbb{S}^2\setminus\{S,N\}$. 
  In particular, if $c_3=\bar{c}_3(c_1,c_2; \nu)$, then there exists a unique solution of (\ref{eq:NSE})  in $C^1(-1, 1)$, where the corresponding 
  \begin{equation}\label{eqLLY2_0_1}
    u_{\theta}^*=\frac{(\nu+\sqrt{\nu^2+c_1})(1-\cos\theta)+(-\nu-\sqrt{\nu^2+c_2})(1+\cos\theta)}{\sin\theta}. 
  \end{equation}
  If $c\notin J_{\nu}$, then there  exists no solution of (\ref{eq:NSE}) in $C^{1}(-1, 1)$. 
\end{thm}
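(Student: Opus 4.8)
The plan is to reduce everything to a careful study of the scalar Riccati ODE (\ref{eq:NSE}) on the open interval $(-1,1)$, since by (\ref{eq_up}) the components $u_r$ and $p$, and hence the full axisymmetric no-swirl solution, are determined by $U_\theta$ up to the constant translation of $p$. The first structural fact I would record is that two solutions of (\ref{eq:NSE}) cannot cross: if $U_\theta^{(1)}$ and $U_\theta^{(2)}$ both solve (\ref{eq:NSE}), their difference $w=U_\theta^{(1)}-U_\theta^{(2)}$ satisfies the linear homogeneous equation $\nu(1-y^2)w' + (2\nu y + \frac{1}{2}(U_\theta^{(1)}+U_\theta^{(2)}))w = 0$, so $w$ vanishes at an interior point only if $w\equiv 0$. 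Thus on any common subinterval the solutions are strictly ordered, the family foliates a region of the $(y,U_\theta)$-plane, and a solution with prescribed $U_\theta(0)=u_\theta(\frac{\pi}{2})=\gamma$ is unique. The problem then becomes: for which $c$ and which $\gamma$ does the orbit through $(0,\gamma)$ extend as a $C^1$ function to all of $(-1,1)$.

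Next I would analyze the two regular singular endpoints $y=\pm 1$, where the leading coefficient $\nu(1-y^2)$ degenerates. Writing (\ref{eq:NSE}) as $U_\theta' = [P_c(y)-2\nu y U_\theta-\frac{1}{2}U_\theta^2]/[\nu(1-y^2)]$, an orbit can approach $y=1$ with a finite limit $\tau_N$ only if the numerator vanishes there, i.e. $\frac{1}{2}\tau_N^2+2\nu\tau_N-2c_2=0$, giving $\tau_N=-2\nu\pm 2\sqrt{\nu^2+c_2}$; similarly at $y=-1$ the admissible limits are $\tau_S=2\nu\pm 2\sqrt{\nu^2+c_1}$. Real admissible limits therefore exist if and only if $c_2\ge -\nu^2$ and $c_1\ge -\nu^2$, which is precisely the first half of the definition of $J_\nu$. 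I would then linearize the flow about each endpoint to classify the two roots as attracting or repelling as $y$ runs into the interval; equivalently, one linearizes the Riccati equation into a second order linear ODE with regular singular points at $y=\pm 1$, whose indicial exponents match this dictionary and whose hypergeometric solutions give quantitative control of the endpoint behavior.

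With this endpoint dictionary in hand I would run a shooting argument in the parameter $\gamma=U_\theta(0)$. By non-crossing, increasing $\gamma$ moves the orbit monotonically upward, so there is a maximal value $\gamma^+(c)$ and a minimal value $\gamma^-(c)$ for which the orbit still connects admissible endpoint limits without blowing up in the interior; for $\gamma\in[\gamma^-(c),\gamma^+(c)]$ the solution extends to $C^1(-1,1)$, while for $\gamma$ outside this range it blows up before reaching an endpoint. The extremal solutions $U_\theta^\pm(c)$ attain the appropriate roots $\tau_S,\tau_N$ at both ends, and continuity of $\gamma^\pm$ in $c$ follows from continuous dependence of the flow on $c$ together with the monotone structure. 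For the degenerate threshold I would insert the linear ansatz $U_\theta=\alpha+\beta y$ into (\ref{eq:NSE}); matching powers of $y$ forces $\beta=-(\sqrt{\nu^2+c_1}+\sqrt{\nu^2+c_2}+2\nu)$ and $c_3=-\frac{1}{2}\beta(\beta+2\nu)=\bar{c}_3(c_1,c_2;\nu)$, reproducing (\ref{eqLLY2_0_1}); this is exactly the value of $c_3$ at which the branches merge, so that $\gamma^-(c)=\gamma^+(c)$ and the solution is unique. Finally, the cases $c_3<\bar{c}_3$ (with $c_1,c_2\ge -\nu^2$) and $c_1<-\nu^2$ or $c_2<-\nu^2$ give non-existence, since then no orbit can meet both endpoint constraints, establishing $c\in J_\nu$ as necessary and sufficient.

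The main obstacle is the global connection problem between the two singular endpoints: the local analysis only determines the admissible boundary limits at each pole separately, and the difficulty is to prove that the set of $\gamma$ whose orbit simultaneously realizes admissible limits at both $y=-1$ and $y=1$ is exactly a nonempty closed interval $[\gamma^-(c),\gamma^+(c)]$, degenerating to a point precisely when $c_3=\bar{c}_3$. Controlling where and whether an orbit blows up, and ruling out escape of the shooting parameter, is the delicate part; this is where the reduction to the second order linear (hypergeometric) equation, the monotonicity in $\gamma$, and careful comparison against the explicit extremal solution $u_\theta^*$ do the real work.
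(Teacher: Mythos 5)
Your framework is sound and in fact mirrors the machinery this survey develops in Section \ref{sec_3} (which, note, only \emph{quotes} Theorem \ref{thmLLY2_0} from \cite{LLY2018b}; the survey's Lemmas \ref{lem3_1}--\ref{lem3_3} and Proposition \ref{prop3_1} presuppose the theorem rather than prove it). The non-crossing/foliation argument, the endpoint dictionary $\tau_1,\tau_2,\tau_1',\tau_2'$ obtained from vanishing of the numerator, the conversion to a second-order linear (hypergeometric) equation, and the verification that the linear ansatz produces $u_\theta^*$ exactly when $c_3=-\tfrac12 B(B+2\nu)=\bar c_3$ are all correct and are the right ingredients.

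However, there is a genuine gap: the three assertions that constitute the actual content of the theorem are asserted, not proved. (a) \emph{Existence} of at least one global solution for every $c\in J_\nu$: your shooting argument defines $\gamma^\pm(c)$ as extremal values of a "good" set of shooting parameters, but you never show this set is nonempty — and without a first global solution (or a sub/supersolution pair, e.g. built by comparison against $U_\theta^*$ or via the power-series construction at the singular endpoints as in Proposition \ref{prop2_1}(iii)), the non-crossing structure has nothing to foliate. (b) \emph{Nonexistence} for $c_1,c_2\ge-\nu^2$ but $c_3<\bar c_3$: "no orbit can meet both endpoint constraints" is precisely the global connection problem you yourself flag as the main obstacle; naming the obstacle does not discharge it. (c) \emph{Uniqueness} when $c_3=\bar c_3$: exhibiting $u_\theta^*$ shows existence, but nothing in your argument excludes other global solutions at that value, i.e. shows $\gamma^-=\gamma^+$. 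Note also that the linear ansatz alone cannot identify $\bar c_3$ as the merging threshold: other root combinations (e.g. $\alpha=\nu-\sqrt{\nu^2+c_1}$, $\beta=-\nu+\sqrt{\nu^2+c_2}$) give linear global solutions at values of $c_3$ \emph{strictly above} $\bar c_3$, so "the value of $c_3$ admitting a linear solution" is not a well-defined threshold and the degeneracy claim needs an independent comparison or disconjugacy argument (equivalently, in the linear formulation \eqref{eqH_0_2}, one must show every solution $w$ vanishes somewhere in $(-1,1)$ iff $c\notin J_\nu$). Finally, your nonexistence argument for $c_1<-\nu^2$ tacitly assumes that a $C^1(-1,1)$ solution has finite limits at $y=\pm1$ rather than oscillating or diverging there; this is itself a nontrivial lemma, which the survey attributes to Theorem 1.3 of \cite{LLY2018a} in the proof of Lemma \ref{lem3_3}, and it must either be proved or explicitly invoked.
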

	
Denote $u^{\pm}(c)=u^{c, \gamma^{\pm}(c)}$, and the corresponding $U^{\pm}_{\theta}=U_{\theta}^{c, \gamma^{\pm}}$. We refer to $u^{+}(c)$ (or $U^+_{\theta}(c)$) as the upper solution and $u^{-}(c)$ (or $U^-_{\theta}(c)$) as the lower solution. 
Any $(-1)$-homogeneous axisymmetric no-swirl solution $u^{c, \gamma}\in C^2(\bS^2\setminus\{S, N\})$ of (\ref{NS}) satisfies $u^-_{\theta}(c)\le u^{c, \gamma}_{\theta}\le u^+_{\theta}(c)$. 
Set
\begin{equation}\label{eq_I}
  I_{\nu}:= \{(c, \gamma)\in \mathbb{R}^4 \mid c\in J_{\nu}, \gamma^-(c)\leq \gamma \leq \gamma^+(c) \}.
\end{equation}
Then all $(-1)$-homogeneous axisymmetric no-swirl solutions of (\ref{NS}) in $C^2(\mathbb{S}^2\setminus \{S, N\})$ are classified as the four-parameter family $\{(u^{c, \gamma}, p^{c, \gamma})\mid (c, \gamma)\in I_{\nu}\}$.
There is a 1-1 correspondence between $u^{c,\gamma}$ and points in the four dimensional surface $I_{\nu}$.
The Landau solutions correspond to $u^{c,\gamma}$ with $c=0$ and $\gamma\in(-2\nu, 2\nu)\setminus\{0\}$, where $u^{0, \gamma}_{\theta}=\frac{2\nu \sin\theta}{\cos\theta+2\nu/\gamma}$. 
The solutions $u^{\tau, \sigma}$ in $C^{\infty}(\mathbb{S}^2\setminus\{S\})$ correspond to those $u^{c,\gamma}$ with $c_2=0$, $c_1=-2c_3$ and $\gamma^-(c)<\gamma\le \gamma^+(c)$. Likewise, the solutions in $C^{\infty}(\mathbb{S}^2\setminus\{N\})$ correspond to $u^{c,\gamma}$ with $c_1=0$, $c_2=-2c_3$ and $\gamma^-(c)\le \gamma< \gamma^+(c)$.
	
The detailed behavior of $\{(u^{c, \gamma}, p^{c, \gamma})\mid (c, \gamma)\in I_{\nu}\}$ was studied in \cite{LLY2018b}. Some of the results are summarized below in terms of the corresponding $U^{c, \gamma}_{\theta}(y)$.  
For $\nu>0$ and $c\in J_{\nu}$, define
\begin{equation}\label{eq_2}
\begin{array}{lll}
	&\tau_{1}:=2\nu-2\sqrt{\nu^2+c_1}, \quad &\tau_{2}:=2\nu+2\sqrt{\nu^2+c_1}, \\
	& \tau_{1}':=-2\nu-2\sqrt{\nu^2+c_2}, \quad &\tau_{2}':=-2\nu+2\sqrt{\nu^2+c_2}.
\end{array}
\end{equation}
\begin{prop}[\cite{LLY2018b}]\label{prop2_1}
  Let $\nu>0$ and $(c,\gamma)\in I_{\nu}$. \\
  (i) If $c_3>\bar{c}_3 (c_1,c_2; \nu)$, then $\gamma^-(c)<\gamma^+(c)$, and $U_{\theta}^{c,\gamma} < U_{\theta}^{c,\gamma'}$ in $(-1,1)$ for any $\gamma^-(c) \leq \gamma < \gamma' \leq \gamma^+(c)$. Moreover,	$\{(y,z)\mid -1<y<1, U_{\theta}^{-} (c)\leq z \leq U_{\theta}^{+} (c) \} = \bigcup_{\gamma\in [\gamma^-(c),\gamma^+(c)]} \{(y,U_{\theta}^{c,\gamma}) \mid -1<y<1\}$. 

  \medskip

	\noindent (ii) For any $(c,\gamma)\in I_{\nu}$, $U_{\theta}^{c, \gamma}\in C^{\infty}(-1, 1)\cap C[-1, 1]$. Moreover, 
	\begin{equation}\label{eqLLY2_1}
		U_{\theta}^{c,\gamma}(-1) = \left\{
		\begin{array}{ll}
			\tau_2 & \mbox{if } \gamma=\gamma^+(c), \\
			\tau_1, & \mbox{otherwise},
		\end{array}
		\right. \quad
		U_{\theta}^{c,\gamma}(1) = \left\{
		\begin{array}{ll}
			\tau_1', & \mbox{if } \gamma=\gamma^-(c), \\
			\tau_2', & \mbox{otherwise}.
		\end{array}
		\right.
	\end{equation}
  In particular, $U_{\theta}^+(c)$ is the unique solution of (\ref{eq:NSE}) on $(-1, -1+\delta)$ for any $\delta>0$ satisfying $U_{\theta}(-1)=\tau_2$, and $U_{\theta}^-(c)$ is the unique solution of (\ref{eq:NSE}) on $(1-\delta, 1)$ for any $\delta>0$ satisfying $U_{\theta}(1)=\tau_1'$.
	
	\medskip
	
	\noindent (iii) There exist some $\delta>0$, and sequences of constants $\{a_n\}_{n=1}^{\infty}$ and $\{b_n\}_{n=1}^{\infty}$, depending only on $c$, such that $U_{\theta}^+(c)=\tau_2+\sum_{n=1}^{\infty}a_n(1+y)^n$ is real analytic in $(-1, -1+\delta)$, and $U_{\theta}^-(c)=\tau_1'+\sum_{n=1}^{\infty}b_n(1-y)^n$ is real analytic in $(1-\delta, 1)$. 
\end{prop}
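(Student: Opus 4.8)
The plan is to exploit the fact that the Riccati-type equation (\ref{eq:NSE}) linearizes. Substituting $U_\theta = 2\nu(1-y^2)\,w'/w$ turns (\ref{eq:NSE}) into the second order linear ODE $2\nu^2(1-y^2)^2 w'' = P_c(y)\,w$, whose endpoints $y=\pm1$ are regular singular points. Computing the indicial equation at $y=-1$ (writing $s=1+y$, so that $P_c\to 2c_1$ and $(1-y^2)^2\sim 4s^2$) gives exponents $\rho_\pm = \tfrac12 \pm \tfrac{1}{2\nu}\sqrt{\nu^2+c_1}$; since a Frobenius solution $w\sim s^\rho$ yields $U_\theta\to 4\nu\rho$ as $s\to 0^+$, the only admissible boundary limits at $-1$ are $4\nu\rho_+ = \tau_2$ and $4\nu\rho_- = \tau_1$. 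The analogous computation at $y=1$ (with $t=1-y$, $P_c\to 2c_2$) produces $\tau_1'$ and $\tau_2'$. This recovers the quadratic-root dichotomy at each endpoint and is the backbone of parts (ii) and (iii).

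For part (i) I would argue directly on the nonlinear equation. Given two solutions $U_\theta^{c,\gamma}, U_\theta^{c,\gamma'}$ on $(-1,1)$, their difference $W := U_\theta^{c,\gamma'} - U_\theta^{c,\gamma}$ satisfies the homogeneous linear first order ODE $\nu(1-y^2)W' + (2\nu y + \tfrac12(U_\theta^{c,\gamma}+U_\theta^{c,\gamma'}))W = 0$, obtained by subtracting the two copies of (\ref{eq:NSE}) and using $\tfrac12((U_\theta^{c,\gamma'})^2 - (U_\theta^{c,\gamma})^2) = \tfrac12(U_\theta^{c,\gamma}+U_\theta^{c,\gamma'})W$. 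Since $W$ is then an exponential multiple of $W(0)$, it never vanishes in $(-1,1)$ unless it vanishes identically; hence solutions are totally ordered and strictly increasing in $\gamma = U_\theta(0) = u_\theta(\pi/2)$. The same sign-preservation shows any solution whose value at $y=0$ lies between those of $U_\theta^-(c)$ and $U_\theta^+(c)$ is trapped between $U_\theta^\pm(c)$, hence global on $(-1,1)$; combined with continuous dependence on $\gamma$ and the intermediate value theorem, this yields the foliation identity.

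For part (ii), interior analyticity is immediate from solving (\ref{eq:NSE}) for $U_\theta' = [P_c - 2\nu y U_\theta - \tfrac12 U_\theta^2]/[\nu(1-y^2)]$, whose right-hand side is real-analytic on $(-1,1)\times\R$, together with the trapping that gives global existence. The boundary values (\ref{eqLLY2_1}) then follow from the Frobenius analysis: a generic linear solution is dominated near $y=-1$ by the smaller-exponent branch $s^{\rho_-}$, forcing $U_\theta\to\tau_1$, while the unique subdominant solution $w\sim s^{\rho_+}$ gives $U_\theta\to\tau_2$; matching subdominant-at-$-1$ behavior to the ordering identifies it with the maximal solution $U_\theta^+(c)$ ($\gamma=\gamma^+$), and symmetrically the subdominant-at-$+1$ behavior ($U_\theta\to\tau_1'$) with $U_\theta^-(c)$ ($\gamma=\gamma^-$). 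For part (iii), the larger-exponent Frobenius solution at each endpoint carries a convergent series $w = s^{\rho_+}h(s)$ with $h$ analytic and $h(0)\ne 0$ (no logarithmic term ever arises for the larger root), so $U_\theta^+(c) = 2\nu(2-s)\rho_+ + 2\nu(2-s)s\,h'/h$ is analytic in $s=1+y$ with value $\tau_2$ at $s=0$, giving $\tau_2 + \sum_n a_n(1+y)^n$; the same argument at $y=1$ gives $U_\theta^-(c) = \tau_1' + \sum_n b_n(1-y)^n$.

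The main obstacle I anticipate is not the local endpoint analysis, which is standard regular-singular-point theory, but the global matching. One must first prove that each trapped solution actually attains a limit at $y=\pm1$ (boundedness and monotonicity up to the boundary, giving $C[-1,1]$), and then connect the local behavior at $y=-1$ with that at $y=+1$ to confirm that the single extremal solution $U_\theta^+$ is simultaneously subdominant at $-1$ and (for $\gamma^+>\gamma^-$) dominant at $+1$ — equivalently, that the linear solution $w$ realizing each extremal profile develops no interior zero in $(-1,1)$. This connection problem, rather than the series construction, is where the real work lies.
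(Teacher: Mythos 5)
Your framework is sound, and it coincides with the machinery this paper itself deploys in Section \ref{sec_3}: the substitution $U_\theta = 2\nu(1-y^2)w'/w$ and the linear equation $2\nu^2(1-y^2)^2w''=P_c\,w$ are exactly (\ref{eqH_0_1})--(\ref{eqH_0_2}). (Note the paper does not prove Proposition \ref{prop2_1} at all—it is quoted from \cite{LLY2018b}—so the only in-paper comparison available is with Lemmas \ref{lem3_1}--\ref{lem3_3} and Proposition \ref{prop3_1}, which use your approach.) Your computations check out: the indicial exponents at $y=-1$ are $\rho_\pm=\tfrac12\pm\tfrac{1}{2\nu}\sqrt{\nu^2+c_1}$, giving admissible limits $4\nu\rho_+=\tau_2$ and $4\nu\rho_-=\tau_1$, with the sign-reversed analogue at $y=1$ producing $\tau_1',\tau_2'$; your part (i) argument (the difference of two solutions solves a homogeneous linear first-order ODE, hence never changes sign) is a complete proof of the strict ordering and, combined with trapping and interior regularity, of the foliation; and the larger-root Frobenius solution indeed carries no logarithm, which gives the local analyticity claimed in (iii).

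The genuine gap is the one you flag in your last paragraph and then leave open: nothing in the proposal shows that the subdominant behavior at $y=-1$ is actually realized by a \emph{global} solution, i.e., that $U_\theta^{c,\gamma^+}(-1)=\tau_2$ rather than every global solution having the generic limit $\tau_1$ (the issue is nontrivial exactly when $c_1>-\nu^2$, so that $\tau_1<\tau_2$). Your Frobenius analysis yields only: each solution near $-1$ has limit $\tau_1$ or $\tau_2$; at most one solution (the ray of the subdominant $w$) has limit $\tau_2$; and \emph{if} some global solution attains $\tau_2$, the ordering forces it to be the maximal one. Without the existence half, (\ref{eqLLY2_1}), the ``in particular'' uniqueness statement of (ii), and the attribution of the series in (iii) to $U^\pm_\theta(c)$ all remain unproven. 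The gap is closable with the tools already in play, essentially as in the paper's proof of Proposition \ref{prop3_1}: let $\tilde U_\theta$ be the unique local solution with $\tilde U_\theta(-1)=\tau_2$ and extend it to its maximal interval $(-1,y_1)$. If $y_1<1$, then $\tilde U_\theta\to-\infty$ as $y\to y_1^-$ (blow-up at a right endpoint must be downward, since the Riccati field gives $U_\theta'<0$ when $U_\theta$ is large and positive; this is Lemma \ref{lem3_3}(b)). Meanwhile $U_\theta^{c,\gamma^+}$ is bounded near $y_1$, and if its limit at $-1$ were $\tau_1$ it would lie strictly below $\tilde U_\theta$ near $-1$; the intermediate value theorem then yields an interior point where the two solutions agree, and uniqueness for the scalar ODE forces $\tilde U_\theta\equiv U_\theta^{c,\gamma^+}$, contradicting the distinct limits at $-1$. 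Hence $y_1=1$, $\tilde U_\theta$ is global, and by your part (i) together with the at-most-one statement it must equal $U_\theta^{c,\gamma^+}$; the mirror argument at $y=1$ identifies $U_\theta^{c,\gamma^-}$ with the subdominant branch there. You should fold this no-crossing/blow-up step into the writeup; as it stands the proposal proves (i), the dichotomy and local uniqueness in (ii), and local analyticity, but not the identification with $\gamma^\pm$ that the statement asserts.
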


In \cite{LLY2018b}, the authors also analyzed  the smoothness properties of $\gamma^{\pm}(c)$ with respect to $c$ for $c$ in different regions of $J_{\nu}$, as well as  the smoothness of $U^{c,\gamma}_{\theta}$ with respect to $(c,\gamma)$ for $(c,\gamma)$ in different regions of $I_{\nu}$. 
The details and proofs of these results can be found in \cite{LLY2018b}.

\subsubsection{Existence and nonexistence of axisymmetric solutions with nonzero swirl}

In \cite{LLY2018a} and \cite{LLY2019a},  the existence and nonexistence of $(-1)$-homogeneous axisymmetric solutions of (\ref{NS}) with nonzero swirl were studied, considering $u\in C^{2}(\mathbb{S}^2\setminus\{S\})$ in \cite{LLY2018a} and $u\in C^{2}(\mathbb{S}^2\setminus\{S, N\})$ in \cite{LLY2019a}. 
By Theorem \ref{thmLLY1_1}, all $(-1)$-homogeneous axisymmetric no-swirl solutions of (\ref{NS}) on $\bS^2\setminus\{S\}$ are classified as the two parameter family $\{(u^{\tau, \sigma}, p^{\tau, \sigma})\mid (\tau, \sigma)\in \mathcal{I}_{\nu}\}$. 
In \cite{LLY2018a}, it was proved that for every $(\tau, \sigma)\in \mathcal{I}_{\nu}\cap \{\tau<3\nu\}$, there exists a one-parameter family of axisymmetric solutions with nonzero swirl in $C^{2}(\mathbb{S}^2\setminus\{S\})$, emanating from $u^{\tau,\sigma}$ in the $e_{\phi}$ direction. 
In contrast, for $(\tau, \sigma)\in \mathcal{I}_{\nu} \cap\{\tau>3\nu\}$, such solutions do not exist.  
Nearby $\{(u^{\tau, \sigma}, p^{\tau, \sigma})\mid (\tau, \sigma)\in \mathcal{I}_{\nu}, \tau>3\nu\}$, there is no $(-1)$-homogeneous axisymmetric solution with nonzero swirl. 

The existence result was established using perturbation methods. Define $b_{\tau,\sigma}(\theta)=-\frac{1}{\nu} \int_{\frac{\pi}{2}}^{\theta}(u_{\theta})_{\tau,\sigma}dt$, and set 
\[
   v_{\tau, \sigma}=\frac{\nu}{ \sin\theta}\left(\int_{0}^{\theta}e^{-b_{\tau,\sigma}(t)}\sin t dt\right) e_{\phi}. 
\]
Then $v_{\tau, \sigma}$ is a solution of the linearized equation of (\ref{NS}) at $(u^{\tau, \sigma}, p^{\tau, \sigma})$. 
We decompose $\mathcal{I}_{\nu}=\mathcal{I}_{\nu 1}\cup \mathcal{I}_{\nu 2}\cup \mathcal{I}_{\nu 3}$, where 
\[
  \mathcal{I}_{\nu 1}=\mathcal{I}_{\nu}\cap \{\tau<2\nu, \sigma <\nu-\frac{\tau}{4}\}, \mbox{ }
  \mathcal{I}_{\nu 2}=\mathcal{I}_{\nu}\cap \{\tau=2\nu, \sigma <\frac{\nu}{2}\}, \mbox{ }
  \mathcal{I}_{\nu 3}= \mathcal{I}_{\nu}\cap \{\tau\ge 2\nu, \sigma =\frac{\tau}{4}\}. 
\]

\begin{thm}[\cite{LLY2018a}]\label{thmLLY1_2}
  Let $K$ be a compact subset of one of the four sets $\mathcal{I}_{\nu 1}$, $\mathcal{I}_{\nu 2}$, $\mathcal{I}_{\nu 3}\cap\{2\nu< \tau<3\nu \}$ and $\mathcal{I}_{\nu 3}\cap \{\tau=2\nu \}$, then there exist $\delta=\delta(K)>0$, and $(u,p)\in C^{\infty}(K\times (-\delta,\delta)\times (\mathbb{S}^2\setminus\{S\}))$ such that for every $(\tau,\sigma, \beta)\in K\times(-\delta,\delta)$, $(u,p)(\tau,\sigma,\beta; \cdot)\in C^{\infty}(\mathbb{S}^2\setminus\{S\})$ satisfies (\ref{NS}) in $\mathbb{R}^3\setminus\{(0,0,x_3)|x_3\le 0\}$, with nonzero swirl if $\beta\ne 0$, and $||\left(\sin\frac{\theta+\pi}{2}\right)(u(\tau, \sigma,\beta)-u_{\tau, \sigma})||_{L^{\infty}(\mathbb{S}^2\setminus\{S\})}\to 0$ as $\beta\to 0$. Moreover, $ \frac{\partial }{\partial \beta}u(\tau, \sigma,\beta)|_{\beta=0}=v_{\tau,\sigma}$.
    
  On the other hand, for $(\tau,\sigma)\in \mathcal{I}_3\cap\{\tau>3\nu \}$, there does not exist any sequence of solutions $\{u^i\}$ of (\ref{NS}) in $C^{\infty}(\mathbb{S}^2\setminus\{S\})$, with nonzero swirl, such that $||\left(\sin\frac{\theta+\pi}{2}\right)(u^i-u_{\tau, \sigma})||_{L^{\infty}(\mathbb{S}^2\setminus\{S\})}\to 0$ as $i\to \infty$. 
\end{thm}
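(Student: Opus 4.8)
The plan is to treat the swirl amplitude $\beta$ as a bifurcation parameter and to construct the family by a perturbation argument off the no-swirl branch $u^{\tau,\sigma}$ classified in Theorem \ref{thmLLY1_1}, while the nonexistence for $\tau>3\nu$ will come from an obstruction in the pole asymptotics at $S$. The structural fact I would exploit is that in (\ref{eq_homo}) the swirl $u_\phi$ enters the equations for $u_r,u_\theta,p$ only through the quadratic term $u_\phi^2$ (equivalently, the triple-integral term in (\ref{eq_NSx2})), whereas the third equation of (\ref{eq_homo}) is linear in $u_\phi$ with coefficient $u_\theta$. Consequently, at $\beta=0$ the full linearization at $(u^{\tau,\sigma},0)$ is block-triangular: the linearized swirl operator decouples and has a one-dimensional kernel (among profiles regular at $N$) spanned precisely by $v_{\tau,\sigma}$, so the problem is a genuine bifurcation to be handled by a Lyapunov--Schmidt/implicit-function reduction with $\beta$ parametrizing that kernel. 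This is also what makes $\left.\frac{\partial}{\partial\beta}u(\tau,\sigma,\beta)\right|_{\beta=0}=v_{\tau,\sigma}$ the expected leading term, with the meridional correction of size $O(\beta^2)$.

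Concretely, I would first reformulate the system as a fixed-point problem for the pair $(u_\theta,u_\phi)$, recovering $u_r,p$ from (\ref{eq_up}), in weighted spaces adapted to the geometry: full $C^2$-control at the regular pole $N$ ($\theta=0$) and a controlled singularity at $S$ ($\theta=\pi$), the weight being exactly $\cos(\theta/2)=\sin\frac{\theta+\pi}{2}$ from the statement. Integrating the third equation of (\ref{eq_homo}) once gives $\nu\left(\frac{du_\phi}{d\theta}+\cot\theta\, u_\phi\right)=C\,e^{-b_{u_\theta}(\theta)}$ with $b_{u_\theta}=-\frac1\nu\int_{\pi/2}^\theta u_\theta$, and imposing regularity at $N$ yields the explicit map $\sin\theta\, u_\phi=C\int_0^\theta \sin t\, e^{-b_{u_\theta}(t)}\,dt$; at $u_\theta=u^{\tau,\sigma}_\theta$ and $C=\beta\nu$ this is $\beta v_{\tau,\sigma}$. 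This renders $u_\theta\mapsto u_\phi$ explicit, leaving only the scalar meridional equation (\ref{eq:NSE}) for $U_\theta$ with a source quadratic in the swirl.

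Next I would run the iteration: given a swirl profile compute the quadratic feedback, solve the resulting inhomogeneous version of (\ref{eq:NSE}) using the solvability and linearization structure behind Theorems \ref{thmLLY1_1} and \ref{thmLLY2_0}, then update the swirl by the formula above, and close a contraction for $\beta$ small; smooth dependence on $(\tau,\sigma)$ together with compactness of $K$ would give a uniform $\delta=\delta(K)$ and the $C^\infty$ joint regularity. The decisive point, and the source of the threshold, is the behavior at $S$: the $N$-regular swirl behaves like $(1+\cos\theta)^{1-\tau/(2\nu)}$ as $y=\cos\theta\to-1$, so the feedback term in (\ref{eq_NSx2}), being a third antiderivative of $\sim(1+y)^{2(1-\tau/(2\nu))-2}$, scales like $(1+y)^{3-\tau/\nu}$; this stays admissible (bounded and vanishing) at $S$ exactly when $\tau<3\nu$, which keeps the iteration inside the weighted space. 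The split of $\mathcal{I}_\nu$ into the four pieces merely reflects the different pole forms of $u^{\tau,\sigma}$ in (\ref{eq_temp11}) at $\tau<2\nu$, $\tau=2\nu$, and $\tau>2\nu$. For the nonexistence half I would argue by contradiction: if solutions $u^i$ with nonzero swirl converged to $u^{\tau,\sigma}$ in the stated weighted norm with $\tau>3\nu$, then $u^i_\theta\sin\theta\to\tau$ at $S$, the same swirl formula pins $U^i_\phi\sim(1+y)^{1-\tau/(2\nu)}$, and the feedback $\sim(1+y)^{3-\tau/\nu}$ blows up at $y=-1$ while every other term of (\ref{eq_NSx2}) stays bounded there; the only balance is for the swirl to vanish in the limit, contradicting nonzero swirl.

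The hard part will be the functional-analytic core of the existence argument: choosing weighted Hölder spaces in which the full linearized operator—the no-swirl linearization of (\ref{eq:NSE}) coupled to the linear swirl operator—is invertible on a complement of the one-dimensional kernel spanned by $v_{\tau,\sigma}$, uniformly over the compact set $K$, and tracking the pole asymptotics at $S$ sharply enough to see that $\tau=3\nu$ is precisely critical. I expect the transitional cases $\tau=2\nu$ (logarithmic behavior in (\ref{eq_temp11})) and the endpoint $\tau\to3\nu$ to require the most careful bookkeeping.
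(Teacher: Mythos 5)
Your proposal follows essentially the same route as the paper's (i.e., the proof in \cite{LLY2018a} that the paper summarizes): existence by perturbation off the no-swirl family $u^{\tau,\sigma}$ via implicit-function-theorem arguments in weighted spaces built on the change of variables (\ref{Va})--(\ref{eq_NSx2}), with $v_{\tau,\sigma}$ spanning the linearized kernel, region-dependent spaces (logarithmic in the borderline cases), and nonexistence for $\tau>3\nu$ by asymptotic analysis at $S$ showing the swirl feedback of order $(1+y)^{3-\tau/\nu}$ is incompatible with the equation — precisely the content of Theorem \ref{thmLLY1_3}(i) that the paper invokes. Your scaling computations identifying $\tau=3\nu$ as the critical threshold are correct and consistent with the paper's asymptotics.
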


This result in particular provided the first examples of $(-1)$-homogeneous solutions with nonzero swirl of (\ref{NS}) and a singular ray exhibiting finite ordered singularity behavior, i.e. $\limsup_{|x|=1, x\to S} |u(x)| |\textrm{dist} (x,S)|^k<\infty$ for some $k>0$. 
A more detailed and stronger version of Theorem \ref{thmLLY1_2}, including a uniqueness result, can be found 
 in \cite{LLY2018a} (see Theorem 4.1, Theorem 4.2 and  Theorem 4.3 there).

The existence part of Theorem \ref{thmLLY1_2} was established using perturbation methods via implicit function theorems. The key step is to construct appropriate Banach spaces and operators, where the change of variables (\ref{Va}) and (\ref{eq_NSx2}) was used. Three different sets of spaces are needed for $(\tau, \sigma)$ in $\mathcal{I}_{\nu 1}$, $\mathcal{I}_{\nu 2}$ and $\mathcal{I}_{\nu 3}$, 
where the singular behaviors of $u_{\tau, \sigma}$ are different. 
In particular, for some $(\tau, \sigma)$ on the boundary of $\mathcal{I}_{\nu}$, the spaces involve logarithmic functions. The nonexistence part follows from analyzing the asymptotic behavior of the solutions near the singularity $S$ on $\bS^2$,  see Theorem \ref{thmLLY1_3} for details. 

Using a similar approach,  \cite{LLY2019a} established the existence and nonexistence results for $(-1)$-homogeneous axisymmetric solutions of \eqref{NS} with nonzero swirl  in $C^{2}(\mathbb{S}^2\setminus\{N,S\})$ nearby the no-swirl solutions $(u^{c,\gamma}, p^{c,\gamma})$. Detailed statements can be found in \cite{LLY2019a}. 

\subsubsection{Remarks on the existence of non-axisymmetric solutions}
\label{sec2_1_3}

In this subsection, we make a brief discussion about the existence and nonexistence of non-axisymmetric $(-1)$-homogeneous solutions of (\ref{NS}) with singular rays. 
Theorem \ref{thm:Sverak} has ruled out the 
existence of non-axisymmetric $(-1)$-homogeneous solutions of (\ref{NS}) in $C^{\infty}(\bS^2)$. 
But what about solutions with singularities on $\bS^2$? 

Some special examples of non-axisymmetric $(-1)$-homogeneous solutions can be inferred from \cite{Sverak2011}, which 
connects $(-1)$-homogeneous solutions of (\ref{NS}) to the conformal geometry on $\bS^2$. 
In \cite{Sverak2011}, it was proved that if $u\in C^{\infty}(\bS^2)$ is a $(-1)$-homogeneous solution of (\ref{NS}), then there exists some function $\varphi\in C^{\infty}(\bS^2)$ satisfying 
\begin{equation}\label{eq:liouville:S2}
	-\triangle_{\mathbb{S}^2} \varphi + 2 = 2 e^{\varphi} \quad \mbox{on } \mathbb{S}^2, 
\end{equation}
such that $u=\nabla_{\mathbb{S}^2} \varphi-\Delta_{\mathbb{S}^2} \varphi e_r$ on $\mathbb{S}^2$. 
Conversely, if $\varphi$ is a solution to (\ref{eq:liouville:S2}), then $u=\nabla_{\mathbb{S}^2} \varphi-\Delta_{\mathbb{S}^2} \varphi e_r$ is a solution to (\ref{NS}) after being extended to a $(-1)$-homogeneous vector field in $\mathbb{R}^3$. 
By the classical Liouville formula (see \cite{Liouville1853, CW1994}), the solutions of (\ref{eq:liouville:S2}) takes the form 
\begin{equation}\label{eq:liouville:xi}
  \displaystyle \varphi= \ln \frac{|f'(z)|^2(1+|z|^2)^2}{(1+|f(z)|^2)^2},
\end{equation}
where $f$ is a locally univalent meromorphic function and $z\in \bC$ is the stereographic projection of $x\in\bS^2$. 
This enables us to construct $(-1)$-homogeneous solutions of (\ref{NS}), including non-axisymmetric ones, by selecting appropriate functions $f$. 

Let $u$ be a $(-1)$-homogeneous solution of (\ref{NS}) on $\mathbb{S}^2\setminus\{S, N\}$ constructed via the Liouville formulas above. If $u\in C^{\infty}(\bS^2)$, then the corresponding $f$ must be of the form $f=az$ for some constant $a$, and $u$ is a Landau solution (see \cite{Sverak2011}). If $u$ is axisymmetric but may have singularities on $\bS^2$, it was proved in \cite{LLY2024} that $f=a z^{\alpha}$ for some $a\in \mathbb{C}\setminus\{0\}$ and $\alpha\in\mathbb{R}\setminus\{0\}$. 
Non-axisymmetric solutions can be constructed by choosing suitable forms of $f$, such as $f=ae^{bz}$, or $f=\int_0^z(t-z_1)^{l_1}\cdots (t-z_m)^{l_m-1}dt$, where the latter one generates solutions of (\ref{NS}) with multiple singularities on $\bS^2$. 

Further investigation into the existence and nonexistence of other non-axisymmetric solutions is of interest. For instance, a natural question is whether nonaxisymmetric solutions exist nearby known axisymmetric solutions. 
  
\subsection{Asymptotic behavior of solutions near singular rays}

To study $(-1)$-homogeneous solutions of (\ref{NS}) with finitely many singularities on $\bS^2$, it is helpful to first analyze the asymptotic behavior of solutions near an isolated singularity $P$ on $\bS^2$. In particular, we focus on the asymptotic behavior and the classifications of solutions satisfying $u=O(1/ {\rm dist}(x, P)^k)$ for some positive integer $k$.  The asymptotic behavior of local $(-1)$-homogeneous axisymmetric solutions of (\ref{NS}) in $B_{\delta}(S)\cap \mathbb{S}^2\setminus\{S\}$, for any $\delta>0$, was analyzed in \cite{LLY2018a} and \cite{LLY2018b}. In \cite{LLY2024},  an optimal removable singularity result was established for $(-1)$-homogeneous solution $u\in C^2(B_{\delta}(S)\cap \mathbb{S}^2\setminus\{S\})$. Below we give an exposition of  these results and make some further discussion. 

\subsubsection{Asymptotic behavior of homogeneous axisymmetric solutions} 

In \cite{LLY2018a} and \cite{LLY2018b},  the asymptotic expansions of all local $(-1)$-homogeneous axisymmetric solutions of (\ref{NS}) in $B_{\delta}(S)\cap \mathbb{S}^2\setminus\{S\}$, for any $\delta>0$, were established. A summary of these results was provided in \cite{LLY2024}, which is recalled below. 
\begin{thm}[\cite{LLY2024}] \label{thmLLY1_3}
	Let $\delta>0$, and $u\in C^2(\mathbb{S}^2 \cap B_\delta(S)\setminus\{S\})$ be a $(-1)$-homogeneous axisymmetric solution of (1). Denote $x'=(x_1,x_2)$. Then $\tau:= \lim_{x\in\mathbb{S}^2,x\to S} |x'| u_\theta$ exists and is finite, and $u = O(1/\big| |x'| \ln |x'| \big|^2)$. Moreover, 
		
	{\rm (i)} If $\tau \ge 3\nu$, then $|x'| u_\phi$ must be a constant, and $|x'| u_\theta$ and $u_r$ must be real analytic functions in $1+\cos\theta$ near $S$ on $\mathbb{S}^2$. 
	
	{\rm (ii)} If $2\nu < \tau < 3\nu$, then either $|x'|u_\phi \equiv $constant, or $\lim_{x\in\mathbb{S}^2,x\to S} |x'|^{\tau/\nu-1} u_\phi$ exists and is finite and not zero. Moreover, $\lim_{x\in\mathbb{S}^2,x\to S} |x'|^{2\tau/\nu-4} u_r$ exists and is finite. 
	
	{\rm (iii)} If $\tau=2\nu$, then $\eta:= \lim_{x\in \mathbb{S}^2, x\to S} (|x'|u_\theta-2\nu)\ln |x'|$ exists and is $0$ or $2\nu$. 
	
	- When $\eta=0$, then $\lim_{x\in\mathbb{S}^2,x\to S} |x'|^{\epsilon} u_r = 0$ for any $\epsilon>0$. Either $|x'| u_\phi$ is a constant, or $\lim_{x\in\mathbb{S}^2,x\to S} |x'| u_\phi$ exists and is finite and not zero. 
	
	- When $\eta=2\nu$, then $\lim_{x\in\mathbb{S}^2,x\to S} |x'|^2 \big| \ln |x'| \big|^2 u_r = -2\nu$, and $\lim_{x\in\mathbb{S}^2,x\to S}|x'|u_\phi$ exists and is finite. 
	
	{\rm (iv)} If $\tau < 2\nu$ and $\tau \not= 0$, then $\lim_{x\in\mathbb{S}^2,x\to S} |x'|u_\phi$ and $\lim_{x\in\mathbb{S}^2,x\to S} |x'|^{\max\{\tau/\nu,0\}} u_r$ both exist and are finite. 
	
	{\rm (v)} If $\tau=0$, then $\sigma:=\lim_{x\in\mathbb{S}^2,x\to S} |x'|u_\phi$ and 
	$\lim_{x\in\mathbb{S}^2,x\to S}u_r/\ln |x'|$ both exist and are finite. 
	 Moreover, $\tilde{u}:=u-\sigma/|x'|e_{\phi}$ is also a solution of (\ref{NS}), and 
	 $\lim_{x\in\mathbb{S}^2,x\to S} |\tilde{u}|/\big| \ln |x'| \big|$ exists and is finite. 
\end{thm}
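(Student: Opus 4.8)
The plan is to reduce the PDE to the ODE system governing axisymmetric solutions and to perform a \emph{local} asymptotic analysis at the singular endpoint corresponding to $S$. Under the change of variables \eqref{Va} the south pole $S$ corresponds to $y = \cos\theta \to -1$, and on $\mathbb{S}^2$ one has $|x'| = \sin\theta = \sqrt{1-y^2}$, so that $|x'| u_\theta = U_\theta(y)$ and hence $\tau = \lim_{y\to -1} U_\theta(y)$. The three governing relations are the Riccati-type equation for $U_\theta$ from \eqref{eq_NSx2} (including the swirl integral term), the linear swirl equation $\nu(1-y^2)U_\phi'' + U_\theta U_\phi' = 0$, and the divergence-free identity $u_r = U_\theta'(y)$. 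All of the claimed limits will be read off from the leading asymptotics of $U_\theta$, $U_\phi$ and $U_\theta'$ as $y \to -1$.

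First I would establish that $\tau := \lim_{y\to -1} U_\theta$ exists and is finite. Since $1 - y^2 = (1-y)(1+y) \sim 2(1+y)$, the endpoint $y=-1$ is a (regular) singular point of the equation; using the sign of the quadratic term $\tfrac12 U_\theta^2$ together with integrating-factor and monotonicity bounds one controls $U_\theta$ near $y=-1$ and extracts the limit $\tau$. Linearizing, $U_\theta = \tau + v$, the leading balance $2\nu(1+y)v' + (\tau-2\nu)v \approx 0$ gives the indicial behavior $v \sim (1+y)^{1-\tau/(2\nu)}$. This identifies $\tau = 2\nu$ as the resonant value at which two indicial roots collide and logarithmic terms are forced, and it organizes the split into the regimes $\tau < 2\nu$, $\tau = 2\nu$, $\tau > 2\nu$, including the dichotomy $\eta \in \{0,2\nu\}$ in (iii).

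Next I would treat the swirl. The swirl equation is linear and first order in $W := U_\phi'$, so it integrates explicitly to $U_\phi'(y) = C \exp\big(-\tfrac1\nu \int^y \tfrac{U_\theta(s)}{1-s^2}\,ds\big)$. Inserting $U_\theta \to \tau$ yields $U_\phi' \sim (1+y)^{-\tau/(2\nu)}$, hence $U_\phi = |x'|u_\phi \sim (1+y)^{1-\tau/(2\nu)}$; the translation $(1+y)\sim |x'|^2/2$ then converts these into the $|x'|$-exponents $\tau/\nu - 1$, etc., in (ii)--(v). Feeding $U_\phi^2$ back through the swirl integral term, the integrand behaves like $(1+y)^{-\tau/\nu}$ and the triple integration raises the exponent by three, producing a correction $\sim (1+y)^{3-\tau/\nu}$ in $U_\theta$; differentiating through $u_r = U_\theta'$ gives $u_r \sim (1+y)^{2-\tau/\nu} \sim |x'|^{4-2\tau/\nu}$, exactly the radial law in (ii), while in the no-swirl-dominated regime $u_r = U_\theta' \sim (1+y)^{-\tau/(2\nu)} \sim |x'|^{-\tau/\nu}$, matching (iv). The second threshold $\tau = 3\nu$ emerges here as the value at which this swirl feedback ceases to be integrable: for $\tau \ge 3\nu$ the constant $C$ must vanish, forcing $|x'|u_\phi$ to be constant as in (i).

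The case analysis (i)--(v) then follows by assembling these leading behaviors, and the universal bound $u = O\big(1/\big||x'|\ln|x'|\big|^2\big)$ is the worst case, saturated at $\tau = 2\nu$, $\eta = 2\nu$, where $u_r \sim -2\nu/\big(|x'|\ln|x'|\big)^2$. I expect the main obstacle to lie precisely in the resonant regimes $\tau = 2\nu$ and $\tau = 3\nu$: there the naive indicial analysis degenerates, logarithmic corrections appear, and one must carry out a careful matched-asymptotic expansion both to prove convergence of the formal series (and thereby the real analyticity asserted in (i) and in the $\eta=0$ branch of (iii)) and to pin down the admissible values of the secondary constants, namely $\eta \in \{0,2\nu\}$ and the precise constant $-2\nu$ in the $\eta = 2\nu$ branch. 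Establishing these discrete quantization statements, rather than merely the order of growth, is the delicate step where the detailed estimates of \cite{LLY2018a, LLY2018b} are needed.
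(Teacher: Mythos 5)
Your overall strategy --- pass to $y=\cos\theta$, $U_\theta=u_\theta\sin\theta$, $U_\phi=u_\phi\sin\theta$, integrate the linear swirl equation with an integrating factor, and read off exponents from an indicial analysis of the Riccati equation at the singular endpoint $y=-1$ --- is exactly the mechanism behind this theorem (note the paper itself contains no proof: it recalls the statement from \cite{LLY2024}, which summarizes the expansions of \cite{LLY2018a,LLY2018b}), and your generic-case bookkeeping is correct: the exponents $|x'|^{\tau/\nu-1}u_\phi$ and $|x'|^{2\tau/\nu-4}u_r$ in (ii), $|x'|^{\tau/\nu}u_r$ in (iv), the threshold $\tau=3\nu$ from the triple-integrated swirl feedback, and the saturation $u_r\sim -2\nu/(|x'|\ln|x'|)^2$ all check out. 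However, there are genuine gaps. First, everything downstream is conditional on the foundational claims that $\tau=\lim U_\theta$ exists, that $(1-y^2)U_\theta'\to 0$, and that $u=O(1/(|x'|\ln|x'|)^2)$; you dispose of these with ``sign of the quadratic term plus integrating-factor and monotonicity bounds,'' but with nonzero swirl the $U_\theta$ equation carries the nonlocal term $\iiint 2U_\phi U_\phi'/(1-s^2)\,ds\,dt\,dl$, whose size and sign are not known a priori, so scalar Riccati comparison arguments do not close by themselves; one must bootstrap the coupled system, and this is where the bulk of the work in \cite{LLY2018a} actually lies.

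Second, your swirl asymptotics fail precisely in the resonant case (iii): plugging $U_\theta\approx 2\nu$ into the integrating factor gives $U_\phi'\sim C(1+y)^{-1}$, hence $|x'|u_\phi=U_\phi\sim C\ln(1+y)\to\infty$, contradicting the theorem's assertion that $\lim_{x\to S}|x'|u_\phi$ is \emph{finite} when $\tau=2\nu$. The resolution requires feeding the correction $v=U_\theta-2\nu$ back into the exponential: in the $\eta=2\nu$ branch, $v\approx 4\nu/\ln(1+y)$ upgrades $U_\phi'$ to $\sim C(1+y)^{-1}|\ln(1+y)|^{-2}$, which is integrable. Moreover that value $\eta=2\nu$ comes from the \emph{nonlinear} Riccati balance $2\nu(1+y)v'+\tfrac12v^2\approx 0$, not from a collision of linear indicial roots as you suggest (at $\tau=2\nu$ the linear-in-$v$ coefficient is $2\nu(1+y)$, which degenerates, so the quadratic term dictates the dichotomy $\eta\in\{0,2\nu\}$); your blanket appeal to matched asymptotics in resonant regimes neither flags nor repairs this. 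Finally, part (v)'s claim that $\tilde u=u-\sigma/|x'|\,e_\phi$ is again a solution of (\ref{NS}) is a structural statement needing its own argument (e.g.\ exploiting that $\sigma/|x'|\,e_\phi=\sigma\nabla\phi$ is irrotational and divergence free, and checking when the cross terms form a gradient --- a condition tied to the precise structure of $U_\phi$ near $y=-1$); your proposal never addresses it, and the quantization of $\eta$ and the analyticity in (i) are deferred to the very references being summarized rather than proved.
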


\medskip
Note (i) implies that if $\tau\ge 3$ and $u$ is a $(-1)$-homogeneous axisymmetric solution in $C^2(\bS^2\setminus\{S\})$, then $u_{\phi}\equiv 0$. This in particular provided the proof of the nonexistence part of Theorem \ref{thmLLY1_2}. 

Similar results to those in Theorem \ref{thmLLY1_3} hold for solutions $u\in C^2(\mathbb{S}^2 \cap B_\delta(N)\setminus\{N\})$. In an ongoing project \cite{LLY2024prep} with Y.Y. Li, we have established the full asymptotic expansion for all $(-1)$-homogeneous axisymmetric no-swirl solutions of (\ref{NS}) in $\mathbb{S}^2\cap B_{\delta}(S)\setminus\{S\}$. 
These expansions take the form of three types of infinite series for $\tau$ in three different sets. 
We have also proved the convergence of these series solutions and shown that all local $(-1)$-homogeneous axisymmetric no-swirl solutions of (\ref{NS}) in $\mathbb{S}^2\cap B_{\delta}(S)\setminus\{S\}$ are completely classified by them. 
We expect that similar results hold for axisymmetric solutions with nonzero swirl, while the expansions are more intricate. We plan to study this problem in the near future.

\subsubsection{Removable singularity for $(-1)$-homogeneous solutions on $\bS^2$}\label{sec2_2_2}

The next step is to study the asymptotic behavior of general $(-1)$-homogeneous solutions of (\ref{NS}) near possible singular rays, without assuming axisymmetry. 
In \cite{LLY2024},  the following removable singularity problem was considered: \emph{For local $(-1)$-homogeneous solutions (not necessarily axisymmetric) of (\ref{NS}) near a potential singularity on $\bS^2$, under what condition is the singularity removable?} 

There has been much study on the behavior of solutions of (\ref{NS}) near isolated singularities in $\R^3$, see e.g. \cite{DE1970, Shapiro1974, Shapiro1976a, Shapiro1976b, CK2000, KK2006, KS2011, MT2012, NP2000, Sverak2011}. 

The asymptotic behavior of axisymmetric solutions, as described in Theorem \ref{thmLLY1_3}, suggests that the least singular behavior of a solution near a singularity $S$ on $\bS^2$ is of the order of $\ln {\rm dist}(x,S)$. 
Therefore, a natural removable singularity condition is $u=o(\ln {\rm dist}(x,S))$. Denote $B_{\delta}(S) := \{ x \in \mathbb{R}^3 \mid {\rm dist}(x,S)<\delta\}$ for $\delta>0$. 
The following result has been established in \cite{LLY2024}. 
\begin{thm}[\cite{LLY2024}]\label{thmLLY5_1}
	Let $\delta>0$, 
	$(u, p)\in C^{2}(\mathbb{S}^2\cap B_{\delta}(S)\setminus\{S\})$ be a $(-1)$-homogeneous solution of (\ref{NS}). If 
  $\lim_{x\in \mathbb{S}^2, x\to S}\frac{|u(x)|}{\ln {\rm dist}(x,S)}=0$, then $(u, p)$ can be extended as a $C^2$ function in $\mathbb{S}^2\cap B_{\delta}(S)$. 
\end{thm}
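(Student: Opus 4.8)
The plan is to exploit the $(-1)$-homogeneity to reduce the statement to a removable-singularity question for a second-order nonlinear elliptic system in the plane, and then to isolate and annihilate a single logarithmic mode. First I would fix a point on the singular ray through $S$ and work in geodesic normal coordinates $y\in D_\delta^*:=\{0<|y|<\delta\}\subset\R^2$ centered at $S$ on $\bS^2$. Writing $U=|x|\,u$ and $P=|x|^2 p$, the reduced form of (\ref{NS}) on $\bS^2$ becomes a perturbed Stokes system
\begin{equation*}
  -\nu\Delta U+\nabla P=F,\qquad \dive U=g,\qquad \text{in }D_\delta^*,
\end{equation*}
whose leading operator is the flat Laplacian (the curvature corrections coming from $\Delta_{\mathbb{S}^2}$ and the moving frame are lower order), and where $F,g$ collect the convective and curvature terms, so that $|F|\lesssim |U|\,|\nabla U|+|U|$ and $|g|\lesssim|U|$. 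The hypothesis translates to the sharp growth bound $U=o(\ln|y|)$ as $|y|\to0$.

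Next I would upgrade the growth bound to its derivatives. On dyadic annuli $A_\epsilon=\{\epsilon<|y|<2\epsilon\}$ I would rescale $\tilde U(z)=U(\epsilon z)$ onto the fixed annulus $\{1<|z|<2\}$ and apply interior $L^p$/Schauder estimates for the Stokes system, treating the nonlinearity as a controlled right-hand side; this yields $\sup_{|y|=\epsilon}|\nabla U|=o(|\ln\epsilon|)/\epsilon$ and $\sup_{|y|=\epsilon}|\nabla^2U|=o(|\ln\epsilon|)/\epsilon^2$, together with a bound for $P$ (normalized by its annular average) of the same homogeneity. In particular $F\in L^p_{loc}$ for some $p>1$ across the origin, so the Newtonian and Stokes potentials of $F$ and $g$ are continuous up to $0$. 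The heart of the argument is then an expansion in the angular variable: decomposing $U$ and $P$ into Fourier modes $e^{in\phi}$ about $S$ turns the Stokes part into decoupled Euler-type ODEs in $|y|$ whose indicial exponents are integers, with homogeneous behaviors $|y|^{\pm n}$ for $n\ge1$ and the borderline pair $\{1,\ln|y|\}$ in the radial mode. Subtracting the (continuous) potentials of $F$ and $g$ reduces $U$ to such a Stokes-harmonic field, and $U=o(\ln|y|)$ rules out every $|y|^{-n}$ ($n\ge1$) mode outright, leaving only a possible logarithmic term $d\ln|y|$ in the radial average. I would extract its coefficient through $\bar U(\epsilon)=\tfrac{1}{2\pi}\int U(\epsilon,\cdot)$, for which the equation gives $\bar U(\epsilon)=d\ln\epsilon+O(1)$; since $|\bar U(\epsilon)|\le\sup_{|y|=\epsilon}|U|=o(|\ln\epsilon|)$, the strict little-$o$ forces $d=0$. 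Hence $U$, and likewise $P$, are bounded near $S$.

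The hard part is precisely this borderline step. A crude energy or capacity estimate does not close at the threshold $o(\ln)$: the singular ray does have zero $W^{1,2}$-capacity in $\R^3$ (logarithmic cutoffs $\chi_\epsilon$ satisfy $\|\nabla\chi_\epsilon\|_{L^2}^2\sim 1/\ln(1/\epsilon)\to0$), but these very cutoffs are not quite strong enough to absorb the $(\ln)^2$-growth of $|U|^2$ in the Caccioppoli identity, so one genuinely must isolate the logarithmic mode and use that the hypothesis is little-$o$ rather than big-$O$. Moreover the extraction of $d$ presupposes the derivative bounds, whose right-hand sides involve $F\sim|U|\,|\nabla U|$, so the whole of the previous paragraph has to be run as a bootstrap starting from the a priori growth of the nonlinear term. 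The pressure, determined only up to an additive constant and carrying its own potential logarithmic mode, must be tracked in parallel throughout.

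Once $U$ and $P$ are known to be bounded near $S$, the conclusion is routine: a set of zero $W^{1,2}$-capacity is removable for bounded weak solutions of the elliptic Stokes system, so $U$ extends as a genuine $H^1$ weak solution across $S$, and De Giorgi--Nash--Moser together with elliptic bootstrapping for the now-subcritical nonlinearity promote $(U,P)$ to $C^\infty$, in particular $C^2$, in $\bS^2\cap B_\delta(S)$. As an alternative to the expansion step I would note the Liouville route from \cite{Sverak2011}: after establishing the correspondence near the singularity, one has locally $u=\nabla_{\mathbb{S}^2}\varphi-\Delta_{\mathbb{S}^2}\varphi\,e_r$ with $-\Delta_{\mathbb{S}^2}\varphi+2=2e^{\varphi}$, so near an isolated singularity $\varphi=-2\alpha\ln|y|+O(1)$ and $\nabla\varphi\sim|y|^{-1}$ unless $\alpha=0$; the bound $u=o(\ln)$ then forces $\alpha=0$ and hence removability.
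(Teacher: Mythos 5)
This survey does not actually reproduce a proof of Theorem \ref{thmLLY5_1} --- it only cites \cite{LLY2024} --- so your proposal has to be judged on its own terms and against the structure suggested by the surrounding results (Theorem \ref{thmLLY1_3} and the optimality examples). Your main route (reduction to a perturbed Stokes-type system in a punctured planar disk, scaled annular derivative estimates, subtraction of potentials, mode analysis, and killing the logarithmic coefficient with the strict little-$o$ hypothesis) is the right kind of argument for an optimal $o(\ln)$ threshold, and your observation that capacity/Caccioppoli arguments cannot close at this threshold is correct and important. But two steps have genuine gaps. First, the reduced system is not the 2D Stokes system: the velocity has three components ($u_r$ enters as a coupled scalar), the radial momentum equation contains the pressure undifferentiated, and the divergence constraint contains $u_r$ as a zeroth-order term. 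These couplings do not change the principal symbol, but in a Fuchsian/Mellin analysis near the puncture they generate additional exponents and possible logarithmic resonances in modes $|n|\ge 1$, whereas your argument only extracts the coefficient of $\ln|y|$ from the angular average; the indicial analysis must be carried out for the actual coupled linearization rather than quoted from Stokes. Second, your final step ``bounded $+$ zero capacity $\Rightarrow$ $H^1$ extension $\Rightarrow$ bootstrap'' does not close as stated: boundedness together with your scaled estimates gives only $|\nabla U|=O(1/|y|)$, which in two dimensions just fails to be in $L^2$ near the puncture (and also makes the error term $\int_{A_\epsilon}|\nabla U|\,|\nabla\chi_\epsilon|$ in the distributional cutoff argument $O(1)$ rather than $o(1)$). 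You need the potential/mode decomposition to deliver H\"older continuity and the improved decay $|\nabla U|=O(|y|^{\alpha-1})$ \emph{before} the extension and regularity bootstrap can run; your sketch conflates ``bounded'' with ``continuous'' at exactly this point.

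Separately, your ``alternative Liouville route'' is not an alternative at all: it is wrong. \v{S}ver\'{a}k's correspondence $u=\nabla_{\mathbb{S}^2}\varphi-\Delta_{\mathbb{S}^2}\varphi\, e_r$ with $-\Delta_{\mathbb{S}^2}\varphi+2=2e^{\varphi}$ is established for solutions in $C^\infty(\bS^2)$, and in the converse direction Liouville solutions produce Navier--Stokes solutions; it is not a local equivalence near a singularity. In particular, the tangential part of any Liouville-type solution is a gradient field on $\bS^2$, so no solution with nonzero swirl admits such a representation --- and solutions with nonzero swirl in $C^\infty(\bS^2\setminus\{S\})$ exist and exhibit precisely the borderline growth at issue (Serrin's solutions, and the swirl solutions of Theorem \ref{thmLLY1_2}). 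Hence ``establishing the correspondence near the singularity'' is impossible in general, and removability cannot be reduced to the asymptotics of solutions of the Liouville equation.
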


  The above removable singularity result is optimal in the following sense: For any $\alpha>0$, there exists a $(-1)$-homogeneous axisymmetric no-swirl solution $(u, p)\in C^{\infty}(\mathbb{S}^2\setminus\{S, N\})$ of (\ref{NS}), such that $\lim_{x\in \mathbb{S}^2, x\to S}|u(x)|/\ln {\rm dist}(x, S)=$\\$\lim_{x\in \mathbb{S}^2, x\to N}|u(x)|/\ln {\rm dist}(x, N) = - \alpha$. Examples of such solutions can be found in \cite{LLY2018b} and \cite{LLY2024}, see example \ref{ex1} for a special case. 

\medskip

In view of Theorem \ref{thmLLY1_3}, 
all $(-1)$-homogeneous axisymmetric solutions of (\ref{NS}) in $C^{\infty}(\mathbb{S}^2\setminus\{S, N\})$ are of the following three mutually exclusive types:
\begin{enumerate}
	\item[]Type 1. Landau solutions, satisfying $\sup_{|x|=1}|u(x)|<\infty$; 
	\item[]Type 2. Solutions satisfying $0<\limsup_{|x|=1,x'\to 0}|u(x)|/| \ln |x'||<\infty$; 
	\item[]Type 3. Solutions satisfying $\limsup_{|x|=1,x'\to 0}|x'||u(x)|>0$. 
\end{enumerate}
Here we denote $x'=(x_1, x_2)$. 
Several examples of solutions with isolated singularities on $\bS^2$, along with discussions of their asymptotic behaviors, were provided in \cite{LLY2024} and summarized below. 

\medskip

\noindent\textbf{a. $(-1)$-homogeneous axisymmetric solutions of (\ref{NS}) on $\bS^2\setminus\{S, N\}$} The existence and classification of these solutions are discussed in Section \ref{sec2_1}. 
Among them, the only Type 2 solutions are a specific family of axisymmetric no-swirl solutions in $C^{\infty}(\bS^2\setminus\{S, N\})$, given by $\{(u^{c, \gamma}, p^{c, \gamma})\mid (c, \gamma)\in I_{\nu}, c_1=c_2=0, c_3>-4\nu^2, c_3\ne 0, \gamma^-<\gamma<\gamma^+\}$ (see Section \ref{sec2_1_1} for the definition of $u^{c, \gamma}$, and Example \ref{ex1} for some special solutions). 
These solutions satisfy $\lim_{x\in \mathbb{S}^2, x\to P}|u^{c,\gamma}|/\ln \textrm{dist}(x, P)=-2|c_3|$, where $P=S$ or $N$. Landau solutions correspond to $\{(u^{c, \gamma}, p^{c, \gamma})\mid c=0, \gamma^-<\gamma<\gamma^+\}$. 
In \cite{LLY2024}, it was shown that all other $(-1)$-homogeneous axisymmetric solutions $u\in C^2(\mathbb{S}^2\setminus\{S, N\})$ are of Type 3. 

\medskip

\noindent\textbf{b. Serrin's solutions} In a pioneering work \cite{Serrin1972} concerning a representative model for tornado-like flows, Serrin studied $(-1)$-homogeneous axisymmetric solutions of (\ref{NS}) in the upper half space $\mathbb{R}^3_+:=\mathbb{R}^3\cap\{x_3>0\}$, with a singular ray along the positive $x_3$-axis and some boundary conditions on $\partial \mathbb{R}^3_+$. Notably, the solutions formulated in \cite{Serrin1972} exhibit behavior distinct from Landau solutions. Near the north pole $N$ on $\bS^2$, these solutions satisfy $u_{\theta}=O(|x'| \ln |x'|)$, $u_r=O(\ln |x'|)$
and $0<\lim_{x\in \mathbb{S}^2, x\to N}|x'| |u_{\phi}|<\infty$. In particular, Serrin's solutions are of Type 3. 

\medskip \noindent\textbf{c. Solutions given by Liouville formulas}
These solutions are discussed in Section \ref{sec2_1_3}. Aside from Landau solutions, all examples obtained via the Liouville formulas are of Type 3. 
There are no Type 2 solutions given by Liouville formulas. 

\medskip

\noindent\textbf{d. Solutions that are also solutions of Euler's equation} There is a family of $(-1)$-homogeneous axisymmetric no-swirl solutions of (\ref{NS}) that also satisfy the Euler's equations. These solutions take the form $u_{\theta}=\frac{a\cos\theta+b}{\sin\theta}$, $u_r=a$, $u_{\phi}=0$, $p=-\frac{a^2+b^2+2ab\cos\theta}{2\sin^2\theta}$ for some constants $a, b\in \R$. 
These solutions are also of Type 3. 

\subsubsection{Further discussion on singular behavior}
Consider general $(-1)$-homogeneous solutions of (\ref{NS}), we raise the following questions. 
\begin{que}
  Let $\delta>0$, $P\in \bS^2$ and $u\in C^{\infty}(B_{\delta}(P)\cap \bS^2\setminus\{P\})$ be a $(-1)$-homogeneous solution of (\ref{NS}). What are the possible asymptotic behaviors of $u$ near $P$?
\end{que}
\begin{que}
  How can we classify $(-1)$-homogeneous solutions of (\ref{NS}) on $\bS^2$ that satisfy $u=O\left(|\textrm{dist}(x,S)|^{-k}\right)$ for some integer $k\ge 1$?
\end{que}
By Theorem \ref{thm:Sverak} (\cite{Sverak2011}),
all solutions in $C^{\infty}(\bS^2)$ are Landau solutions, which we refer to as Type 1 solutions. By the removable singularity result (Theorem \ref{thmLLY5_1}), for any $P\in \bS^2$, a $(-1)$-homogeneous solution in $C^{\infty}(B_{\delta}(P)\cap \bS^2\setminus\{P\})$ satisfying $u=o(\ln \text{dist} (x, P))$ can be smoothly extended across $P$. The next natural class of interest is the family of $(-1)$-homogeneous solutions exhibiting Type 2 behavior, i.e. solutions satisfying $0<\limsup_{|x|=1,x'\to 0}|u(x)|/| \ln |x'||<\infty$. 
A natural question is: 
\emph{Does there exist a nonaxisymmetric $(-1)$-homogeneous solution exhibiting Type 2 behavior? Under what conditions must a Type 2 solution  in $C^{\infty}(\bS^2\setminus\{S, N\})$ be axisymmetric? }

As shown in \cite{LLY2024},  any Type 2 axisymmetric solution on $\bS^2\setminus\{S, N\}$ must be no-swirl. The only Type 2 axisymmetric solutions in $C^{\infty}(\bS^2\setminus\{S, N\})$ are the family $\{u^{c, \gamma}\mid (c, \gamma)\in I_{\nu}, c_1=c_2=0, c_3> -4\nu^2, c_3\ne 0, \gamma^-<\gamma<\gamma^+\}$.  In particular, there is no $(-1)$-homogeneous axisymmetric solution
$u\in C^2(\mathbb{S}^2\setminus\{P\})$ of (\ref{NS}) satisfying $0<\limsup_{|x|=1,x\to P}|u(x)|/|\ln {\rm dist}(x, P)|<\infty$, where $P=S$ or $N$. 

If nonaxisymmetric solutions are considered, then the classification becomes much more involved. So far, the only nonaxisymmetric solutions we know are those constructed via Liouville formulas (see Section \ref{sec2_1_3}). 
These solutions are of the form $u=\nabla_{\mathbb{S}^2} \varphi-\Delta_{\mathbb{S}^2} \varphi e_r$ on $\mathbb{S}^2\setminus\{S, N\}$ where $\varphi$ is given by (\ref{eq:liouville:xi}), generated from some meromorphic function $f$. 
Using this expression, it can be shown that such solutions do not exhibit Type 2 behavior. 

\subsection{Vanishing viscosity limit of homogeneous solutions}\label{sec2_3}%

Let $\nu=0$ in (\ref{NS}), then (\ref{NS}) reduces to the $3$D incompressible stationary Euler equations 
\begin{equation}\label{Euler}
  v\cdot \nabla v +\nabla q = 0, \quad \dive\textrm{ } v=0.
\end{equation}
A fundamental question in fluid dynamics is whether solution of Navier-Stokes equations converge to the solutions of Euler equations as $\nu\to 0$. 
Building on the classification of all $(-1)$-homogeneous axisymmetric no-swirl solutions of (\ref{NS}) on $\bS^2\setminus\{S, N\}$, \cite{LLY2019b} studied the vanishing viscosity limit of sequences of these solutions. 

The classification of all $(-1)$-homogeneous axisymmetric solutions of the Euler equations (\ref{Euler}) were obtained in \cite{TianXin1998}. 
Let $J_0$ be defined by (\ref{eq_J}) with $\nu=0$, and $\partial'J_0=(\partial J_0\cap \{c_1=0\textrm{ or }c_2=0\})\setminus\{0\}$, where $\partial J_0$ denotes the boundary of $J_0$. Let $P_c(x)$ be the second order polynomial given in (\ref{eq:NSE}).
Let $v^{\pm}_c=v^{\pm}_{c, r}\vec{e}_r + v^{\pm}_{c, \theta} \vec{e}_\theta$, where
$$
  v^{\pm}_{c, \theta} (r, \theta, \varphi)=\pm \frac { \sqrt{ 2P_c(\cos\theta) } }{r \sin\theta}, \quad 
  v^{\pm}_{c, r}(r, \theta, \varphi)=\pm \frac{P_c'(\cos\theta)}{r\sqrt{2P_c(\cos\theta)}}, 
$$
and
$
  q_c(r, \theta, \varphi)=-\frac{1}{2r^2}(P_c''(\cos\theta)+\frac{2P_c(\cos\theta)}{\sin^2\theta}).
$
Then $\{(v^{\pm}_c, q_c)\ |\ c\in \mathring J_0\cup \partial' J_0\}$ is the set of all $(-1)$-homogeneous axisymmetric no-swirl solutions of (\ref{Euler}) in $C^{\infty}(\mathbb S^2 \setminus\{S, N\})$. 

Let $c\in J_0$, $\nu_k\to 0$, and $(c_k, \gamma_k)\in I_{\nu_k}$ be a sequence such that $c_k\to c$. The question is whether the corresponding $(-1)$-homogeneous axisymmetric no-swirl solutions $(u_{\nu_k}^{c_k, \gamma_k}, p_{\nu_k}^{c_k, \gamma_k})$ (as identified in Theorem \ref{thmLLY2_0}) of (\ref{NS}) converge to one of the solutions $(v^{\pm}_c, q^{\pm}_c)$ of (\ref{Euler}) as $\nu_k\to 0$ and $c_k\to c$. 
For $c$ and $c_k$ in different parts of $J_0$ and $J_{\nu_k}$, the limiting behaviors of $(u_{\nu_k}^{c_k, \gamma_k}, p_{\nu_k}^{c_k, \gamma_k})$ 
 are different. 
In particular, the behaviors of the upper and lower solutions $\{(u_{\nu_k}^{\pm}, p_{\nu_k}^{\pm})\}$ are different from other solutions. 
In most cases, $(u_{\nu_k}^{\pm}, p_{\nu_k}^{\pm})$ will $C^m_{loc}$ converge to $(v_c^{\pm}, q_c^{\pm})$ on the entire sphere, 
while for other solutions, boundary layer or interior later behaviors occur. Specifically, or every latitude circle, there exist sequences $(u_{\nu_k}, p_{\nu_k})$ that $C^m_{loc}$ converge to different solutions of (\ref{Euler}) 
 on the spherical caps above and below the latitude circle respectively, separated by a transition layer. 
In \cite{LLY2019b}, the thickness of the transition layers and the convergence rates of the solutions as $\nu_k\to 0$ were analyzed in detail.  
Below we present some partial results for the case when $c_k=c$ in $\mathring{J}_{0}$ (the interior of $J_0$). For more details of the vanishing viscosity behavior of all $(-1)$-homogeneous axisymmetric no-swirl solutions of (\ref{NS}) on $\bS^2\setminus\{S, N\}$, we refer the readers to \cite{LLY2019b}. 
\begin{thm}\label{thm1_0}
  (i) There exist $(-1)$-homogeneous axisymmetric no-swirl solutions
  \newline $\{(u^{\pm}_{\nu}(c), p^{\pm}_{\nu}(c))\}_{0< \nu\le 1}$ of (\ref{NS}), belonging to $C^{0}(\mathring{J}_{\nu}\times (0,1], C^m(\mathbb{S}^2\setminus(B_{\epsilon}(S)\cup B_{\epsilon}(N))))$ for every integer $m\ge 0$, such that for every compact subset $K\subset \mathring{J}_{0}$, and every $\epsilon>0$, there exists some constant $C$ depending only on $\epsilon, K$ and $m$, such that
  \[
    ||(u^{\pm}_{\nu}(c), p^{\pm}_{\nu}(c))-(v^{\pm}_c, q_c)||_{C^m(\mathbb{S}^2\setminus\{B_{\epsilon}(S)\cup B_{\epsilon}(N)\})}\le C\nu, \quad c\in K.
  \]
  (ii) For every $0<\theta_0<\pi$, there exist $(-1)$-homogeneous axisymmetric no-swirl solutions $\{(u_{\nu}(c, \theta_0), p_{\nu}(c, \theta_0))\}_{0<\nu\le 1}$ of (\ref{NS}), belonging to $C^{0}(\mathring{J}_{\nu}\times(0,1]\times(0,\pi), C^m(\mathbb{S}^2\setminus(B_{\epsilon}(S)\cup B_{\epsilon}(N))))$ for every integer $m\ge 0$, such that for every compact subset $K\subset \mathring{J}_0$, and every $\epsilon>0$, there exists some constant $C$ depending on $\epsilon, K$ and $m$, such that
  \[
   \begin{split}
    & ||(u_{\nu}(c, \theta_0), p_{\nu}(c, \theta_0))-(v^{-}_c, q_c)||_{C^m(\mathbb{S}^2\cap \{\theta_0+\epsilon<\theta<\pi-\epsilon\})}\\
    &+||(u_{\nu}(c, \theta_0), p_{\nu}(c, \theta_0))-(v^{+}_c, q_c)||_{C^m(\mathbb{S}^2\cap\{\epsilon<\theta<\theta_0-\epsilon\})}\le C\nu, \quad c\in K.
  \end{split}
  \]
\end{thm}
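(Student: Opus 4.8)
\emph{Overall strategy.} The plan is to reduce everything to the scalar Riccati equation (\ref{eq:NSE}) for $U_\theta=u_\theta\sin\theta$ and to treat the limit $\nu\to 0$ as a singular perturbation. Setting $\nu=0$ in (\ref{eq:NSE}) leaves the algebraic relation $\frac12 U_\theta^2=P_c(y)$, whose two roots $\pm\sqrt{2P_c(y)}$ are precisely the Euler profiles: indeed $v^{\pm}_{c,\theta}\sin\theta=\pm\sqrt{2P_c(\cos\theta)}$ and, using $u_r=U_\theta'$ from (\ref{eq_up}), $\pm P_c'/\sqrt{2P_c}$ recovers $v^{\pm}_{c,r}$. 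For $c\in\mathring J_0$ one has $P_c>0$ on all of $[-1,1]$ (the threshold $c_3=\bar c_3$ is exactly where $P_c$ acquires a double root, so strict positivity and separation of the two branches hold throughout $\mathring J_0$), which is what allows both branches to serve as smooth outer solutions away from the poles. Throughout I will use the foliation structure of Proposition \ref{prop2_1}: the solutions $U_\theta^{c,\gamma}$, $\gamma\in[\gamma^-(c),\gamma^+(c)]$, are ordered and fill the region between the lower branch $U_\theta^-$ and the upper branch $U_\theta^+$, and (\ref{eq:NSE}) obeys a first-order comparison principle.

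\emph{Part (i).} Write $\Phi:=\sqrt{2P_c}$ and substitute $U_\theta^+=\Phi+E$ into (\ref{eq:NSE}). Since $\frac12\Phi^2=P_c$, the error solves
\[
\nu(1-y^2)E' + \bigl(\Phi+2\nu y\bigr)E + \tfrac12 E^2 = -\nu\bigl[(1-y^2)\Phi' + 2y\Phi\bigr].
\]
On any compact $y$-interval $[-1+\delta,1-\delta]$ and compact $K\subset\mathring J_0$, the right-hand side is $O(\nu)$ and the linear coefficient $\Phi+2\nu y$ is bounded below by a positive constant. I would construct the barriers $\Phi\pm A\nu$, verify that for $A=A(\delta,K)$ large they are super/sub-solutions of (\ref{eq:NSE}) on $[-1+\delta,1-\delta]$, and pin $U_\theta^+$ between them using the boundary values supplied by Proposition \ref{prop2_1} (namely $U_\theta^+(-1)=\tau_2\to 2\sqrt{c_1}=\sqrt{2P_c(-1)}$ and $U_\theta^+(1)=\tau_2'\to 2\sqrt{c_2}$) together with the comparison principle. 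This yields $\|E\|_{C^0}=O(\nu)$ uniformly on $K$ and away from the poles; differentiating (\ref{eq:NSE}) and bootstrapping then upgrades this to the $C^m$ estimate (indeed $C^{m+1}$, needed to control $u_r=U_\theta'$ and $p$ through (\ref{eq_up})) with rate $\nu$. The lower solution $U_\theta^-$ is handled identically with $\Phi$ replaced by $-\Phi$.

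\emph{Part (ii).} Here the target solution follows the \emph{upper} branch $+\Phi$ for $\theta<\theta_0$ (i.e.\ $y>y_0:=\cos\theta_0$) and the \emph{lower} branch $-\Phi$ for $\theta>\theta_0$, with an internal transition layer at $y_0$; this is consistent with the boundary data of the interior foliation members, which by Proposition \ref{prop2_1} tend to $-2\sqrt{c_1}$ at $y=-1$ and $+2\sqrt{c_2}$ at $y=1$. I would run a matched-asymptotics construction: the outer solutions are the two branches, and the inner solution is obtained in the stretched variable $\xi=(y-y_0)/\nu$, where (\ref{eq:NSE}) becomes at leading order the autonomous equation
\[
(1-y_0^2)\frac{dU}{d\xi} = P_c(y_0) - \tfrac12 U^2 .
\]
This has equilibria $\pm\sqrt{2P_c(y_0)}$ and a monotone heteroclinic orbit rising from $-\sqrt{2P_c(y_0)}$ at $\xi\to-\infty$ to $+\sqrt{2P_c(y_0)}$ at $\xi\to+\infty$ (a $\tanh$-type profile), matching the two outer branches in the correct orientation. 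To produce a genuine solution I would select the member $U_\theta^{c,\gamma_\nu}$ of the foliation whose transition sits at $y_0$: by Proposition \ref{prop2_1}(i) the family is monotone in $\gamma$, so the transition location moves continuously and monotonically as $\gamma$ ranges over $(\gamma^-,\gamma^+)$, sweeping all of $(-1,1)$, and an intermediate-value argument produces a unique $\gamma_\nu$ placing the layer at $\theta_0$. Because the layer is exponentially localized on the $\xi$-scale, its contribution is $O(\nu)$ (in fact exponentially small) once $|\theta-\theta_0|\ge\epsilon$, and the outer estimates of Part (i) then give the claimed $C^m$ convergence to $v_c^{+}$ above and $v_c^{-}$ below $\theta_0$.

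\emph{Main obstacle.} The technical heart is the rigorous control of the internal layer in Part (ii): proving it is exponentially localized, that its location is a monotone continuous function of $\gamma$ reaching exactly $\theta_0$, and---most delicately---upgrading the matching from $o(1)$ to the sharp rate $O(\nu)$ in $C^m$ uniformly for $c$ in a compact $K\subset\mathring J_0$. This requires a quantitative correction of the composite approximate solution (via a contraction/fixed-point or implicit-function argument for the error), together with care that all constants degrade only as $c\to\partial J_0$---where $P_c$ loses strict positivity and the two branches merge---which is exactly why the statement is restricted to compact subsets of $\mathring J_0$ and to regions bounded away from both poles, where $1-y^2=\sin^2\theta$ vanishes and separate boundary layers form.
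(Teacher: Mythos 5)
Your overall strategy is the right one, and it is consistent with the framework this survey sets up (the paper itself states Theorem \ref{thm1_0} without proof, citing \cite{LLY2019b}, so the comparison here is against that framework rather than a written-out argument). The structural ingredients you identify are all correct: $v^{\pm}_c$ correspond to the two roots $\pm\Phi$, $\Phi:=\sqrt{2P_c}$, of the degenerate ($\nu=0$) Riccati equation; $P_c>0$ on $[-1,1]$ precisely on $\mathring J_0$; the endpoint values $\tau_2\to 2\sqrt{c_1}=\Phi(-1)$, $\tau_2'\to 2\sqrt{c_2}=\Phi(1)$ (resp.\ $\tau_1\to-\Phi(-1)$, $\tau_2'\to+\Phi(1)$ for interior foliation members) anchor the comparison argument; and the internal-layer equation $(1-y_0^2)U_\xi=P_c(y_0)-\tfrac12U^2$ has its heteroclinic running upward from $-\sqrt{2P_c(y_0)}$ to $+\sqrt{2P_c(y_0)}$, which matches the orientation in the statement ($v^-$ below the circle $\theta=\theta_0$, $v^+$ above it). The barrier computation with $\Phi\pm A\nu$ does close at the $C^0$ level, and the monotonicity/intermediate-value selection of $\gamma_\nu$ is the natural way to pin the layer at $\theta_0$.

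There is, however, one concrete step that fails as written: in part (i) you claim that ``differentiating (\ref{eq:NSE}) and bootstrapping'' upgrades $\|E\|_{C^0}=O(\nu)$ to $\|E\|_{C^m}=O(\nu)$. It does not, because every differentiation of the equation costs a factor $1/\nu$. Writing $R(y):=(1-y^2)\Phi'+2y\Phi$, your error equation gives
\begin{equation*}
E'=\frac{-\nu R-(\Phi+2\nu y)E-\tfrac12E^2}{\nu(1-y^2)},
\end{equation*}
and with only $E=O(\nu)$ the numerator is $O(\nu)$, so this yields $E'=O(1)$, not $O(\nu)$; further derivatives degrade by an additional $\nu^{-1}$ each time. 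The sharp $C^m$ rate is true, but proving it requires the higher-order expansion $\Phi+\nu\Phi_1+\cdots+\nu^{m+1}\Phi_{m+1}$ (with $\Phi_1=-R/\Phi$, and subsequent correctors determined recursively by dividing by the nondegenerate coefficient $\Phi$), a barrier argument applied to the top-order remainder $E_m$ giving $\|E_m\|_{C^0}=O(\nu^{m+2})$, and only then does the division-by-$\nu$ recursion return $\|E_m\|_{C^k}=O(\nu^{m+2-k})$ and hence $\|U_\theta^{+}-\Phi\|_{C^m}=O(\nu)$ from the corrector terms; the estimates for $u_r=U_\theta'$ and $p$ via (\ref{eq_up}) then follow. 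You do acknowledge exactly this need (``quantitative correction of the composite approximate solution'') for part (ii), but assert the naive bootstrap for part (i); the corrector construction is unavoidable in both parts, and it is also what makes the constants uniform on compact $K\subset\mathring J_0$. Two smaller omissions: the exponential localization of the internal layer (which rests on the $O(1/\nu)$ repulsion/attraction rates along the slow branches $\mp\Phi$) is asserted rather than proved, and the continuity of the solution families in $(c,\nu)$ and $(c,\nu,\theta_0)$, which is part of the statement of Theorem \ref{thm1_0}, is not addressed at all, though it follows from the uniqueness in Theorem \ref{thmLLY2_0} and continuous dependence once the selection $\gamma_\nu(c,\theta_0)$ is shown to be unique.
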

For every $c$ in $\mathring{J}_0$, $P_c>0$ on $[-1,1]$, and $v^{+}_c \ne v^{-}_c$ on $\mathbb{S}^2\cap\{\theta=\theta_0\}$. The limit functions in Theorem \ref{thm1_0} (ii) have jump discontinuities across the circle $\{\theta=\theta_0\}$. In Section \ref{sec_3}, we will present graphs and streamlines of selected examples to illustrate the behavior of solutions as the viscosity tends to zero. 
We also plan to extend the study about vanishing viscosity limit to axisymmetric solutions with nonzero swirl obtained in \cite{LLY2018a} and \cite{LLY2019a}. 

\subsection{Asymptotic stability of homogeneous solutions}\label{sec2_4}

Consider the $3$D incompressible evolutionary 
Navier-Stokes equations 
\begin{equation}\label{NSE}
  \left\{
    \begin{array}{ll}
      u_t-\nu\Delta u+(u\cdot \nabla) u+\nabla p=f,& \textrm{ in } \mathbb{R}^3\times (0,\infty), \\
      \dive u=0, & \textrm{ in } \mathbb{R}^3\times (0,\infty), \\
      u(x, 0)=u_0(x), & 
    \end{array}
  \right.
\end{equation}
with initial velocity $u_0(x)$ 
 and external force $f=f(x, t)$. 
When $f=0$, it is a classical result that there exists a weak solution to (\ref{NSE}) satisfying $\lim_{t\to \infty}\|u(\cdot, t)\|_{L^2(\R^3)}=0$, demonstrating the asymptotic stability of the trivial solution 
(see, e.g., \cite{Cannone2004, Kajikiya1986, Kato1984, Lemarie2002, Leray1934, Masuda1984, Schonbek1985, Teman1977}). 
The asymptotic stability has also been studied for other steady states \cite{CK2004, CKPW2022, Karch2011, KPS2017, BW2025}. For Landau solutions, the external force takes the form $f=b\delta_0$ for some constant $b$ as mentioned in Section \ref{sec:intro}. 
In \cite{Karch2011}, it was proved that small Landau solutions are asymptotically stable under $L^2$-perturbations. The $L^p$-stability of Landau solutions for $p\ge 3$ has been studied by \cite{LZZ2023, ZZ2021}. 

Since the 
 solutions   obtained in \cite{LLY2018a, LLY2018b, LLY2019a} exhibit  different behavior from Landau solutions, it is natural to explore their asymptotic stability or instability. 
This question was studied in  \cite{LY2021} for the class of $(-1)$-homogeneous axisymmetric solutions of (\ref{NS}) exhibiting Type 2 behavior. These solutions are the family $\{(u^{c, \gamma}, p^{c, \gamma})\mid (c, \gamma)\in M_{\nu}\}$ among the solutions classified in Theorem \ref{thmLLY2_0}, where $M_{\nu}:=\{(c,\gamma)\mid c_1=c_2=0, c_3>-4\nu^2, \gamma^-<\gamma<\gamma^+\}$. 
Each of these solutions satisfies $u^{c,\gamma}=O(\ln |x'|)$ and $p^{c,\gamma}=O(\ln |x'|)$ as $x'\to 0$ on $\bS^2$, and they exhibit the least singular behavior among all $(-1)$-homogeneous solutions except Landau solutions. 
The corresponding external force of such a solution is given by 
$f^{c, \gamma}=(4\pi c_3 \ln |x_3| \partial_{x_3}\delta_{(0,0,x_3)}-b^{c,\gamma}\delta_{0})e_3$, where $b^{c,\gamma}=2\pi\int_{-1}^{1}\left(y|U_{\theta}'|^2-\frac{2+y^2}{1-y^2}\nu U_{\theta}-\frac{y}{1-y^2}U_{\theta}^2 \right)dy$. 

For any fixed $(c, \gamma)\in M_{\nu}$, 
let $u$ be a solution of (\ref{NSE}) with $f=f^{c, \gamma}$. Let  
$w=u-u^{c,\gamma}$, $\pi=p-p^{c, \gamma}$ and $w_0=u_0-u^{c,\gamma}$. 
Then $w$ satisfies the perturbed system 
\begin{equation}\label{eqS}
  \left\{
  \begin{array}{ll}
    w_t-\nu\Delta w+(w\cdot \nabla) w+(w\cdot \nabla) u^{c,\gamma}+(u^{c,\gamma}\cdot \nabla) w+\nabla \pi=0, & 
    \textrm{ in } \mathbb{R}^3\times (0,\infty),\\ 
    \dive w=0, & \textrm{ in } \mathbb{R}^3\times (0,\infty),\\
    w(x,0)=w_0(x). &
  \end{array}
  \right.
\end{equation}
\begin{thm}[\cite{LY2021}]\label{thmS}
  There exists some $\mu_0>0$, such that for any $c=(0,0,c_3)$, $|(c,\gamma)|<\mu_0$ and $w_0\in L^2_{\sigma}(\mathbb{R}^3)$, there exists a weak solution $w$ of (\ref{eqS}) in $\mathbf{X}:=L^{\infty}([0, \infty), L^2_{\sigma}(\mathbb{R}^3))\cap L^2([0,\infty), \dot{H}^1_{\sigma})$. Moreover, $w$ is weakly continuous from $[0,\infty)$ to $L^2_{\sigma}(\mathbb{R}^3)$, and satisfies 
  $\|w(t)\|_{L^2(\mathbb{R}^3)}^2+\int_{s}^{t}\|\nabla \otimes w(\tau)\|_{L^2(\mathbb{R}^3)}^2 d\tau\le \|w(s)\|_{L^2(\mathbb{R}^3)}^2$  
  for almost all $s\ge 0$, including $s=0$ and all $t\ge s$. 
  Furthermore, any such weak solution satisfies $\displaystyle \lim_{t\to \infty}\|w(t)\|_{L^2(\mathbb{R}^3)}=0$. If in addition $w_0\in L^p(\mathbb{R}^3)\cap L^2_{\sigma}(\mathbb{R}^3)$ for some $\frac{6}{5}<p<2$, then there exists some constant $C>0$, depending only on $(c,\gamma)$, $n, p$ and $\|w_0\|_{p}$, such that $\|w(t)\|_2\le Ct^{-\frac{3}{2}(\frac{1}{p}-\frac{1}{2})}$, for all $t>0$.
\end{thm}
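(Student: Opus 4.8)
The plan is to treat the perturbation system (\ref{eqS}) as a Navier--Stokes system driven by a \emph{scaling-critical and small} background drift $u^{c,\gamma}$, and to combine a Leray--Hopf construction with a Fourier-splitting decay analysis, in the spirit of the stability theory for Landau solutions in \cite{Karch2011}. Everything rests on a single smallness estimate in weak $L^3$. Since $u^{c,\gamma}$ is $(-1)$-homogeneous and of Type 2, its profile on $\mathbb{S}^2$ grows only logarithmically at $S,N$, so $\int_{\mathbb{S}^2}|u^{c,\gamma}|^3<\infty$; by homogeneity this gives $u^{c,\gamma}\in L^{3,\infty}(\mathbb{R}^3)$ with $\|u^{c,\gamma}\|_{L^{3,\infty}}$ comparable to the $L^3(\mathbb{S}^2)$-norm of the profile, which tends to $0$ as $(c,\gamma)\to 0$. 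Testing the equation with $w$, the nonlinear term and the transport term $(u^{c,\gamma}\cdot\nabla)w$ vanish because $\dive w=\dive u^{c,\gamma}=0$, and after integrating by parts the only surviving contribution obeys
\[
  \Big|\int_{\mathbb{R}^3}(w\cdot\nabla)u^{c,\gamma}\cdot w\,dx\Big|
  =\Big|\int_{\mathbb{R}^3}u^{c,\gamma}\cdot(w\cdot\nabla)w\,dx\Big|
  \le \|u^{c,\gamma}\|_{L^{3,\infty}}\,\|w\|_{L^{6,2}}\,\|\nabla w\|_{L^2}
  \le C\|u^{c,\gamma}\|_{L^{3,\infty}}\|\nabla w\|_{L^2}^2,
\]
using Hölder in Lorentz spaces and the Sobolev embedding $\dot H^1\hookrightarrow L^{6,2}$. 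Fixing $\mu_0$ so small that $C\|u^{c,\gamma}\|_{L^{3,\infty}}\le \nu/2$ whenever $|(c,\gamma)|<\mu_0$ absorbs the bad term and yields the coercive energy inequality $\tfrac12\tfrac{d}{dt}\|w\|_{L^2}^2+\tfrac{\nu}{2}\|\nabla w\|_{L^2}^2\le 0$.

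\textbf{Existence, weak continuity, energy inequality.} I would build $w$ by a Galerkin scheme in $L^2_\sigma(\mathbb{R}^3)$. The estimate above holds for the smooth approximants $w^N$, giving uniform bounds in $\mathbf{X}=L^\infty([0,\infty),L^2_\sigma)\cap L^2([0,\infty),\dot H^1_\sigma)$ depending only on $\|w_0\|_{L^2}$. Bounding $\partial_t w^N$ in a negative-order space and applying Aubin--Lions gives strong convergence in $L^2_{\rm loc}(\mathbb{R}^3\times(0,T))$; one then passes to the limit in the weak formulation, the only delicate term being $\int (w^N\cdot\nabla)u^{c,\gamma}\cdot\phi=-\int u^{c,\gamma}\cdot(w^N\cdot\nabla)\phi$, which converges because $u^{c,\gamma}w^N\to u^{c,\gamma}w$ in $L^1_{\rm loc}$ via $\|u^{c,\gamma}w\|_{L^1}\le\|u^{c,\gamma}\|_{L^{3,\infty}}\|w\|_{L^{3/2,1}}$. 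Weak continuity $w\in C_w([0,\infty);L^2)$ and the strong energy inequality then follow from the standard Leray--Hopf arguments using weak lower semicontinuity of the norms.

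\textbf{Decay to zero.} The coercive inequality makes $\|w(t)\|_{L^2}$ nonincreasing, so $\ell:=\lim_{t\to\infty}\|w(t)\|_{L^2}$ exists; I would show $\ell=0$ by Schonbek's Fourier-splitting method. Splitting frequencies at a radius $\rho(t)$ gives
\[
  \frac{d}{dt}\|w\|_{L^2}^2+c\nu\rho^2\|w\|_{L^2}^2\le c\nu\rho^2\int_{|\xi|\le\rho}|\widehat w(\xi,t)|^2\,d\xi,
\]
and the task reduces to controlling the low-frequency mass. Writing the Leray-projected equation in Duhamel form with $G=\mathbb{P}\,\dive(w\otimes w+w\otimes u^{c,\gamma}+u^{c,\gamma}\otimes w)$, the initial contribution $\int_{|\xi|\le\rho}|\widehat{w_0}|^2 e^{-2\nu|\xi|^2 t}\,d\xi\to 0$ as $\rho\to 0$, while the Duhamel part is estimated through $|\widehat G(\xi,s)|$. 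With $\rho(t)\to 0$ chosen appropriately, the right-hand side is forced to $0$, whence $\lim_{t\to\infty}\|w(t)\|_{L^2}=0$.

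\textbf{Rate under the $L^p$ assumption.} If in addition $w_0\in L^p$, $6/5<p<2$, then $\widehat{w_0}\in L^{p'}$ by Hausdorff--Young, so $\int_{|\xi|\le\rho}|\widehat{w_0}|^2\,d\xi\le C\rho^{3(1-2/p')}\|w_0\|_{L^p}^2$ with $3(1-2/p')=6(\tfrac1p-\tfrac12)$. Feeding this, together with the $L^2$-decay already obtained (to bound the Duhamel integral of $G$), into the splitting inequality with $\rho^2=k/(1+t)$ for $k$ large, an integrating-factor/bootstrap argument in the decay exponent yields $\|w(t)\|_{L^2}^2\le C(1+t)^{-3(\frac1p-\frac12)}$, i.e. the stated rate $\|w(t)\|_{L^2}\le Ct^{-\frac32(\frac1p-\frac12)}$. \textbf{Main obstacle.} The genuinely new difficulty, absent for the bounded Landau field, is that $u^{c,\gamma}\notin L^2$ and is singular along the entire $x_3$-axis, so $w\otimes u^{c,\gamma}\notin L^1$ in general and the naive bound $|\widehat G|\le\|G\|_{L^1}$ is unavailable. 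The crux is therefore to replace these $L^1$ bounds throughout by Lorentz-space and interpolation estimates built on $u^{c,\gamma}\in L^{3,\infty}$, and to carry just enough integrability of $w$ through the Duhamel formula to close both the decay and the rate.
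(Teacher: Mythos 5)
Your treatment of the key coercivity estimate is correct, and it takes a genuinely different route from the paper. The paper keeps the derivative on the background field: it uses the pointwise bound $|\nabla u^{c,\gamma}|=O(1/(|x||x'|))$, for which the classical Hardy inequality is insufficient, and proves the new anisotropic Hardy-type inequality (\ref{eqH_3}) to obtain the bilinear form bound $|\int_{\mathbb{R}^3}(v\cdot\nabla u^{c,\gamma})\cdot w\,dx|\le K\|\nabla v\|_{L^2}\|\nabla w\|_{L^2}$ with $K$ small, which it identifies as the key step. You instead integrate by parts onto $w$ and use only that $u^{c,\gamma}\in L^{3,\infty}(\mathbb{R}^3)$ with small norm, together with O'Neil's H\"older inequality and the refined Sobolev embedding $\dot H^1\hookrightarrow L^{6,2}$. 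This is valid: a $(-1)$-homogeneous field belongs to $L^{3,\infty}(\mathbb{R}^3)$ precisely when its profile lies in $L^3(\mathbb{S}^2)$ (with comparable norms), the Type 2 logarithmic singularities are cubically integrable on the sphere, and the same argument gives the full bilinear bound with $v\ne w$. This softer route bypasses inequality (\ref{eqH_3}) entirely; what it still owes is the verification, from the classification, that $\|u^{c,\gamma}\|_{L^3(\mathbb{S}^2)}\to0$ as $(c,\gamma)\to0$ (plausible, since the logarithmic coefficient is $2|c_3|$), but that is minor.

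The genuine gap is in the decay statements, which are the substance of the theorem. Your Fourier-splitting scheme controls the low-frequency mass ``through $|\widehat G(\xi,s)|$'', i.e.\ through a pointwise bound on the Fourier transform of $G=\mathbb{P}\,\dive(w\otimes w+w\otimes u^{c,\gamma}+u^{c,\gamma}\otimes w)$; this needs $u^{c,\gamma}\otimes w\in L^1$, which fails — as you yourself note in your closing paragraph — and you then leave the repair (``replace these $L^1$ bounds by Lorentz-space estimates'') as a declared crux rather than carrying it out. The repair is not routine. The natural substitute, $\int_{|\xi|\le\rho}|\widehat{u^{c,\gamma}\otimes w}|^2\,d\xi\lesssim\rho^2\|u^{c,\gamma}\|_{L^{3,\infty}}^2\|w\|_{L^2}^2$ (Lorentz Hausdorff--Young plus H\"older), fed into the splitting inequality with $\rho^2\sim k/(1+t)$, produces a drift contribution of size $k^2\|u^{c,\gamma}\|_{L^{3,\infty}}^2\sup_{s\le t}\|w(s)\|_{L^2}^2$: it does not decay on its own, and in the rate bootstrap it only reproduces, never improves, the decay exponent. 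Both conclusions can still be closed, but only with an additional absorption/iteration argument that invokes the smallness of $\|u^{c,\gamma}\|_{L^{3,\infty}}$ a second time; none of this is in your proposal. The paper sidesteps the issue altogether: following \cite{Karch2011}, the form bound is used to build semigroup theory for the linearized operator ($L^2$ decay and $L^p$--$L^2$ estimates of the semigroup), and both $\lim_{t\to\infty}\|w(t)\|_{L^2}=0$ and the rate $t^{-\frac32(\frac1p-\frac12)}$ come from Duhamel against that semigroup, with no need to place $u^{c,\gamma}\otimes w$ in $L^1$. As written, your proposal establishes the energy inequality and (modulo standard technicalities) existence, but not the asymptotic stability assertions.
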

Here $L^2_{\sigma}(\mathbb{R}^3))$ denote the space of all divergence free functions in $L^2(\R^3)$. The proof of Theorem \ref{thmS} relies on semigroup theory and follows the framework in \cite{Karch2011}. The key step is to establish the energy estimates 
$|\int_{\mathbb{R}^3}(v\cdot\nabla u^{c,\gamma})\cdot w dx| \le K\|\nabla w\|_{L^2}\|\nabla v\|_{L^2}$,
for some sufficiently small constant $K$, and any divergence free $v, w\in C_c^{\infty}(\mathbb{R}^3)$. 
When $u^{c,\gamma}$ is replaced by a small Landau solution $u^b$, for which $|\nabla u^b|=O(1/|x|^2)$,  the energy estimate follows from the classical Hardy's inequality (see \cite{Karch2011}). 
However, the solutions $u^{c,\gamma}$ are more singular, 
which satisfy $|\nabla u^{c, \gamma}|=O(\frac{1}{|x||x'|})$ and Hardy's inequality does not work in this context. 
In \cite{LY2021}, the energy estimate was proved 
by establishing a {\it new} weighted Hardy type inequality:
\begin{equation}\label{eqH_3}
  \int_{\mathbb{R}^n}\frac{|w(x)|^2}{|x\|x'|}dx\le C \int_{\mathbb{R}^n}|\nabla w(x)|^2\frac{|x'|}{|x|}dx, \textrm{ for all }w\in \dot{H}^1(\mathbb{R}^n), 
\end{equation}
where $x'=(x_1,...,x_{n-1})$ for dimensions $n\ge 2$, and $C>0$ is a constant depending only on $n$. 
Note that (\ref{eqH_3}) is stronger than the classical Hardy's inequality (which is (\ref{eqH_3}) with $|x'|$ replaced by $|x|$). This type of inequalities are of independent research interest. 
In \cite{LY2023}, inequality (\ref{eqH_3}) was further extended into a family of rather general anisotropic Caffarelli-Kohn-Nirenberg inequalities. 
The optimal constant and extreme functions have been studied recently by \cite{BC2025, HY2025}. 
The $L^p$-stability of Type 2 solutions $u^{c,\gamma}$ 
has been studied in \cite{ZZ2023} for $p\ge 3$. 

A natural next objective is to study the asymptotic stability of $(-1)$-homogeneous solutions exhibiting Type 3 behavior. We begin with the least singular case, where $|\bar{u}|=O(1/|x'|)$ on $\mathbb{S}^2$. These solutions are more singular than Type 2 solutions, and the stability proof in \cite{LY2021} 
 does not apply. New approaches or analytical tools will be needed. 
As a first step, we have identified the force of all the $(-1)$-homogeneous axisymmetric no-swirl solutions in $C^2(\bS^2\setminus\{S, N\})$ in an ongoing project \cite{LLY2024prep}.

\section{Some remarks and graphs of homogeneous axisymmetric no-swirl solutions}\label{sec_3}
In this section, we make some remarks and discussions on $(-1)$-homogeneous axisymmetric no-swirl solutions $(u, p)$ of (\ref{NS}) on $\bS^2\cap\{\theta_0<\theta<\theta_1\}$, for some $0\le \theta_0<\theta_1\le \pi$. Recall for such solutions, (\ref{NS}) is reduced to (\ref{eq:NSE}) of $U_{\theta}$, with $c=(c_1, c_2, c_3)\in \R^3$. We refer to $(u, p)$ as a \emph{global solution} if $\theta_0=0$ and $\theta_1=\pi$, in which case the corresponding $U_{\theta}$ satisfies (\ref{eq:NSE}) on $(-1, 1)$. Otherwise, when $\theta_0>0$ or $\theta_1<\pi$, we say $(u, p)$ is a \emph{local solution}. 
By Theorem \ref{thmLLY2_0}, all global solutions 
are classified as a four-parameter family $\{u^{c, \gamma}\mid (c,\gamma)\in I_{\nu}\}$, where $I_{\nu}$ is defined by (\ref{eq_I}). In this section, we investigate the behavior of local solutions and demonstrate graphs of some examples of both global and local solutions. To facilitate this analysis, we transform 
(\ref{eq:NSE}) into a second-order linear differential equation. In particular, if $c_1, c_2\ge -\nu^2$, 
(\ref{eq:NSE}) can be converted to a hypergeometric differential equation, enabling more explicit analysis and graphical illustration.

\subsection{Representation of 
solutions} 

As mentioned in Section \ref{sec2_1_1}, equation (\ref{eq:NSE}) is a Ricatti-type ODE and can be converted to a second-order linear ODE. Let $U_{\theta}$ be a solution of (\ref{eq:NSE}) on some interval $(y_0, y_1)\subset (-1, 1)$. Set 
$w(y):=e^{\int_{y_0}^y\frac{U_{\theta}(s)}{2\nu (1-s^2)}ds}$, then $w\ne 0$ on $(y_0, y_1)$ and $U_{\theta}$ can be expressed by 
\be\label{eqH_0_1}
  U_{\theta}=U_{\theta}[w]:=2\nu (1-y^2)\frac{w'(y)}{w(y)}.
\ee
By computation, (\ref{eq:NSE}) is equivalent to 
\be\label{eqH_0_2}
  2\nu^2(1-y^2)^2w''(y)-(c_1(1-y)+c_2(1+y)+c_3(1-y^2))w(y)=0, 
\ee
for $y_0<y<y_1$. 
By standard ODE theory, (\ref{eqH_0_2}) always have two linearly independent solution $w_1, w_2\in C^2(-1, 1)$, and its general solution is given by $w=C_1w_1+C_2w_2$ for constants $C_1, C_2$. 

Let $\bar{y}\in (-1, 1)$ and $\gamma\in \R$, consider the initial value problem 
\be\label{eqN_1}
  \nu (1-y^2)U_\theta' + 2\nu y U_\theta + \frac{1}{2}U_\theta^2 =c_1(1-y)+c_2(1+y)+c_3(1-y^2), \quad 
  U_{\theta}(\bar{y})=\gamma.
\ee 
Let $w_1, w_2$ be a pair of fundamental solutions of (\ref{eqH_0_2}). Denote 
\be\label{eqN_1_2}
  \mu:=\frac{\gamma}{2\nu (1-\bar{y}^2)},\quad \textrm{ and } D:= -\frac{w_2'(\bar{y})-\mu w_2(\bar{y})}{w_1'(\bar{y})-\mu w_1(\bar{y})} \textrm{ if }w_1'(\bar{y})\ne \mu w_1(\bar{y}).
\ee
Recall we have defined $\tau_1, \tau_2, \tau_1', \tau_2'$ in (\ref{eq_2}). 
\begin{lem}\label{lem3_1}
  Let $\nu>0$, $c=(c_1, c_2, c_3)\in \R^3$, $\bar{y}\in (-1, 1)$ and $\gamma\in \R$. Then there exist some $y_0, y_1$ satisfying $-1\le y_0<\bar{y}<y_1\le 1$, 
  and a unique solution $U_{\theta}\in C^{1}(y_0, y_1)$ of (\ref{eqN_1}), given by (\ref{eqH_0_1}), with 
  \be\label{eq3_1_1}
    w(y)=\left\{
    \begin{array}{ll}
      Dw_1(y)+w_2(y), & \textrm{ if }w_1'(\bar{y})\ne \mu w_1(\bar{y}),\\
      w_1(y),  &\textrm{ if } w_1'(\bar{y})= \mu w_1(\bar{y}), 
    \end{array}
    \right.
  \ee
  where $w\ne 0$ on $(y_0, y_1)$, $w_1, w_2$ are two linearly independent fundamental
  solutions of (\ref{eqH_0_2}), and $\mu $ and $D$ are defined by (\ref{eqN_1_2}). Moreover, these are all the solutions of (\ref{eq:NSE}) on some interval $(y_0, y_1)\subset (-1, 1)$. 
\end{lem}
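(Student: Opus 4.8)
The plan is to exploit the equivalence between the Riccati equation \eqref{eq:NSE} and the linear equation \eqref{eqH_0_2} already recorded in the text: whenever $w$ solves \eqref{eqH_0_2} and $w\ne 0$, the function $U_\theta[w]=2\nu(1-y^2)w'/w$ solves \eqref{eq:NSE}, and conversely any solution $U_\theta$ of \eqref{eq:NSE} produces, through $w=\exp\big(\int \frac{U_\theta(s)}{2\nu(1-s^2)}\,ds\big)$, a zero-free solution of \eqref{eqH_0_2} with $U_\theta=U_\theta[w]$. Since $U_\theta[\lambda w]=U_\theta[w]$ for any constant $\lambda\ne 0$, this correspondence is really between solutions of \eqref{eq:NSE} and nonvanishing solutions of \eqref{eqH_0_2} taken up to scaling. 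The whole lemma will be proved by transporting the initial-value problem \eqref{eqN_1} across this correspondence, the only structural point being that \eqref{eqH_0_2} is regular on $(-1,1)$ since its leading coefficient $2\nu^2(1-y^2)^2$ is strictly positive there.

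First I would translate the initial condition. If $w$ solves \eqref{eqH_0_2} with $w(\bar y)\ne 0$, then $U_\theta[w](\bar y)=\gamma$ is, by \eqref{eqH_0_1}, exactly the linear constraint $w'(\bar y)=\mu\,w(\bar y)$ with $\mu$ as in \eqref{eqN_1_2}. Writing $w=C_1 w_1+C_2 w_2$ in a fundamental system, this is the single equation $C_1(w_1'(\bar y)-\mu w_1(\bar y))+C_2(w_2'(\bar y)-\mu w_2(\bar y))=0$. When $w_1'(\bar y)\ne \mu w_1(\bar y)$ I normalize $C_2=1$, which forces $C_1=D$ and yields the first branch of \eqref{eq3_1_1}; otherwise $w_1$ already satisfies the constraint and I take $w=w_1$, the second branch. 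In either case I must verify $w(\bar y)\ne 0$: if $w(\bar y)=0$, the constraint gives $w'(\bar y)=\mu w(\bar y)=0$, so $w\equiv 0$ by uniqueness for the linear second-order ODE, contradicting $C_2=1$ (respectively $w_1\not\equiv 0$). Hence $w(\bar y)\ne 0$, and by continuity $w\ne 0$ on a maximal open interval $(y_0,y_1)\subset(-1,1)$ containing $\bar y$, which I take to be the interval in the statement.

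On $(y_0,y_1)$ the function $U_\theta[w]$ is then well defined, smooth, solves \eqref{eq:NSE}, and by construction satisfies $U_\theta[w](\bar y)=\gamma$, which establishes existence. For uniqueness and for the ``all solutions'' assertion I would invoke standard ODE theory: solving \eqref{eqN_1} for $U_\theta'$ gives a right-hand side that is smooth in $y$ and (being quadratic in $U_\theta$) locally Lipschitz in $U_\theta$ on $(-1,1)\times\R$, so the Cauchy problem has a unique maximal solution. Any solution $V$ of \eqref{eq:NSE} on a subinterval, evaluated at an interior point $\bar y$ with $\gamma:=V(\bar y)$, must therefore coincide with the constructed $U_\theta[w]$ on their common domain; this simultaneously shows that the constructed solution is the only one and that every solution of \eqref{eq:NSE} is of the claimed form \eqref{eqH_0_1}--\eqref{eq3_1_1}.

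The step I expect to carry the most weight is matching the maximal interval of the Riccati solution to the maximal nonvanishing interval of $w$. I must confirm that $U_\theta[w]$ cannot be continued past a zero of $w$ lying in $(-1,1)$: at such a zero $y_*$ one has $w'(y_*)\ne 0$ (again by uniqueness for \eqref{eqH_0_2}, since $w(y_*)=w'(y_*)=0$ would force $w\equiv 0$) while $1-y_*^2>0$, so $|U_\theta[w]|\to\infty$ and the Riccati solution genuinely breaks down there. The only way to leave $(y_0,y_1)$ without encountering a zero of $w$ is to reach $\pm 1$, which is precisely the alternative $y_0=-1$ or $y_1=1$ permitted in the statement. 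The remaining computation—the identity converting \eqref{eq:NSE} into \eqref{eqH_0_2}—is already supplied in the text and need only be cited; the limiting values $\tau_1,\tau_2,\tau_1',\tau_2'$ from \eqref{eq_2} enter only when one later examines the boundary behavior as $y_0\to -1$ or $y_1\to 1$, and are not needed for the local statement here.
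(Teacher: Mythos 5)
Your proof is correct and follows essentially the same route as the paper's: construct $w$ by (\ref{eq3_1_1}), verify $w(\bar{y})\ne 0$ (the paper computes $w(\bar{y})$ explicitly via the Wronskian of $w_1,w_2$, while you use the built-in constraint $w'(\bar{y})=\mu w(\bar{y})$ together with uniqueness for the linear ODE---equivalent arguments), and then obtain existence, uniqueness, and the ``all solutions'' claim from the Riccati--linear correspondence and standard ODE theory. Your additional analysis of blow-up of $U_{\theta}[w]$ at zeros of $w$, used to identify the maximal interval, is not in the paper's proof of this lemma but is precisely the content the paper defers to Lemma \ref{lem3_3}(b).
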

\begin{proof}
  Let $w$ be defined by (\ref{eq3_1_1}) and $U_{\theta}$ be defined by (\ref{eqH_0_1}). Note $w\in C^{\infty}(-1, 1)$. We show that there exist some $\epsilon>0$, such that $U_{\theta}$ is the unique solution of (\ref{eqN_1}) in $(\bar{y}-\epsilon, \bar{y}+\epsilon)$. 

  We first show that there is some $\epsilon>0$ such that $w\ne 0$ in $(\bar{y}-\epsilon, \bar{y}+\epsilon)$, for which we only need to show $w(\bar{y})\ne 0$. Indeed, if $w_1'(\bar{y})\ne \mu w_1(\bar{y})$, then by (\ref{eqN_1_2}) and (\ref{eq3_1_1}), 
  \[
    w(\bar{y})=Dw_1(\bar{y})+w_2(\bar{y})=\frac{w_1'(\bar{y})w_2(\bar{y})-w_2'(\bar{y})w_1(\bar{y})}{w_1'(\bar{y})-\mu w_1(\bar{y})}.
  \]
  As $w_1, w_2$ are linearly independent solutions of (\ref{eqH_0_2}), we have $w_1'(\bar{y})w_2(\bar{y})-w_2'(\bar{y})w_1(\bar{y})\ne 0$, 
  and thus $w(\bar{y})\ne 0$. 
  When $w_1'(\bar{y})= \mu w_1(\bar{y})$, we have $w(y)=w_1(y)$. 
  If $w_1(\bar{y})=0$, then $w_1'(\bar{y})=0$ as well. By standard ODE theory, we have $w_1\equiv 0$, which is a contradiction since $w_1$ is a fundamental solution of (\ref{eqH_0_2}). 
  So $w(\bar{y})\ne 0$ in both cases, 
  and there exists some $\epsilon>0$, such that $w\ne 0$ and $U_{\theta}$ given by (\ref{eqH_0_1}) is well-defined in $(\bar{y}-\epsilon, \bar{y}+\epsilon)$. Since $w$ satisfy (\ref{eqH_0_2}), a direct computation shows $U_{\theta}$ satisfies the ODE in (\ref{eqN_1}) in $(\bar{y}-\epsilon, \bar{y}+\epsilon)$. By the definition of $U_{\theta}$, $w$ and $D$, we also have $U_{\theta}(\bar{y})=\gamma$ by direct computation. 
  The uniqueness of the solution of (\ref{eqN_1}) also follows from standard ODE theories. 
\end{proof}
\begin{lem}\label{lem3_3}
 Let $\nu>0$, $c=(c_1, c_2, c_3)\in \R^3$, $-1\le y_0<y_1\le 1$. Assume $U_{\theta}$ is a solution of (\ref{eq:NSE}) whose domain in $(-1, 1)$ is $(y_0, y_1)$. Then\\
  (a) If $y_0=-1$, then  $U_{\theta}(-1):=\lim_{y\to -1^+} U_{\theta}$ exists and is finite, and $U_{\theta}(-1)=\tau_1$ or $\tau_2$. 
  If $y_1=1$, then $U_{\theta}(1):=\lim_{y\to 1^-} U_{\theta}$ exists and is finite, and $U_{\theta}(1)=\tau_1'$ or $\tau_2'$. \\
  (b) If $y_0>-1$, then $\lim_{y\to y_0^+} U_{\theta}=+\infty$. If $y_1<1$, then $\lim_{y\to y_1^-} U_{\theta}=-\infty$. 
\end{lem}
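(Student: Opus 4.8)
The plan is to reduce everything to the linear second-order ODE (\ref{eqH_0_2}) via the correspondence from Lemma \ref{lem3_1}. Given a solution $U_\theta$ of (\ref{eq:NSE}) with domain $(y_0,y_1)\subset(-1,1)$, I write $U_\theta=2\nu(1-y^2)w'/w$ as in (\ref{eqH_0_1}), where $w$ solves (\ref{eqH_0_2}). Since $(1-y^2)^2>0$ on $(-1,1)$, every point of $(-1,1)$ is an ordinary point of (\ref{eqH_0_2}), so $w$ extends to a real-analytic solution on all of $(-1,1)$, and the maximal interval on which $U_\theta$ is defined is precisely a connected component of $\{w\ne 0\}$. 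Thus $(y_0,y_1)$ is such a component, and if $y_0>-1$ (resp. $y_1<1$) then $w(y_0)=0$ (resp. $w(y_1)=0$). The only singular points of (\ref{eqH_0_2}) are $y=\pm 1$, so the whole proof amounts to reading off the behavior of $w'/w$ at interior zeros (part (b)) and at the singular endpoints (part (a)).

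For part (b), suppose $y_0>-1$, so $w(y_0)=0$. Because (\ref{eqH_0_2}) is a linear second-order ODE with nonvanishing leading coefficient at $y_0$, uniqueness for the initial value problem forces zeros of $w$ to be simple: if $w(y_0)=w'(y_0)=0$ then $w\equiv 0$, contradicting $w\ne 0$ on $(y_0,y_1)$. Hence $w(y)=w'(y_0)(y-y_0)+O((y-y_0)^2)$ with $w'(y_0)\ne 0$, and therefore
\[
  U_\theta(y)=2\nu(1-y^2)\frac{w'(y)}{w(y)}=\frac{2\nu(1-y_0^2)}{y-y_0}\,(1+o(1)),\qquad y\to y_0^+.
\]
Since $1-y_0^2>0$ and $y-y_0\to 0^+$, this gives $U_\theta\to+\infty$. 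The same computation at a zero $y_1<1$, now with $y-y_1\to 0^-$, yields $U_\theta\to-\infty$, disposing of (b).

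For part (a), suppose $y_0=-1$, so $w\ne 0$ on $(-1,-1+\delta)$ for some $\delta>0$; I analyze (\ref{eqH_0_2}) near its regular singular point $y=-1$. Setting $s=1+y$ and using $1-y^2=(2-s)s$ and $P_c(-1)=2c_1$, equation (\ref{eqH_0_2}) becomes, to leading order as $s\to 0^+$, the Euler equation $s^2w''-\frac{c_1}{4\nu^2}w=0$, whose indicial equation $\rho(\rho-1)=c_1/(4\nu^2)$ has roots $\rho_\pm=\tfrac12\pm\tfrac{1}{2\nu}\sqrt{\nu^2+c_1}$. If $c_1<-\nu^2$ these roots are complex, the Frobenius solutions are $s^{1/2}$ times a bounded oscillation in $\ln s$, and every nonzero real solution then has zeros of $w$ accumulating at $s=0$; this contradicts $w\ne 0$ on $(-1,-1+\delta)$, so the hypothesis $y_0=-1$ forces $c_1\ge-\nu^2$ and hence $\tau_1,\tau_2\in\R$. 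With real indicial roots, Frobenius theory gives $s\,w'(s)/w(s)\to\rho_-$ or $\rho_+$ as $s\to0^+$ (the generic leading exponent being the smaller root $\rho_-$, unless its coefficient vanishes, in which case $\rho_+$ appears). Consequently
\[
  U_\theta=2\nu(2-s)s\,\frac{w'}{w}\longrightarrow 2\nu\cdot 2\cdot\rho=4\nu\rho\in\{4\nu\rho_-,\,4\nu\rho_+\}=\{\tau_1,\tau_2\},
\]
which is finite. The endpoint $y_1=1$ is identical after the change $s=1-y$: now $P_c(1)=2c_2$, the indicial roots are $\tfrac12\pm\tfrac{1}{2\nu}\sqrt{\nu^2+c_2}$, and the extra sign from $w'=-dw/ds$ turns the limits into $-4\nu\rho=\tau_1'$ or $\tau_2'$.

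The routine parts are the interior-zero computation in (b) and the algebra identifying $4\nu\rho_\pm$ with $\tau_1,\tau_2$ (and their analogues with $\tau_1',\tau_2'$). The main obstacle is the endpoint analysis in (a): one must justify rigorously that $s\,w'/w$ has a limit and that the limit is an indicial root. This requires (i) excluding oscillatory complex-root behavior using the nonvanishing of $w$ up to the endpoint, and (ii) controlling possible logarithmic terms in the Frobenius expansion when the indicial roots coincide ($c_1=-\nu^2$) or differ by a positive integer, to confirm that the $\ln s$ contribution is negligible in the ratio $w'/w$. I would handle (ii) by writing $w=s^{\rho_-}(\text{unit}+(\log))$ explicitly from the Frobenius recursion and differentiating, which shows $s\,w'/w=\rho_-+o(1)$ whenever the $s^{\rho_-}$-part is present and $=\rho_++o(1)$ otherwise.
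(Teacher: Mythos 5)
Your proof is correct, and for part (a) it takes a genuinely different route from the paper's. For part (b) the two arguments are essentially the same: both pass to $w$ via Lemma \ref{lem3_1}, extend $w$ to $(-1,1)$ as a solution of the linear equation (\ref{eqH_0_2}), and note that an interior endpoint forces a simple zero of $w$ (a double zero would give $w\equiv 0$). The only difference is the sign determination: the paper concludes $U_\theta\to+\infty$ or $-\infty$ and then excludes $-\infty$ by a contradiction argument inside the nonlinear equation (\ref{eq:NSE}), choosing a sequence $y_k\to y_0^+$ with $U_\theta'(y_k)>0$ and comparing with $P_c(y_0)$, whereas your Taylor expansion $w(y)=w'(y_0)(y-y_0)+O((y-y_0)^2)$ pins down the sign directly from $w'/w\sim (y-y_0)^{-1}$; yours is slightly more economical. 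For part (a), however, the paper does not analyze the linear ODE at the endpoints at all: it cites Theorem 1.3 of \cite{LLY2018a}, which asserts that $U_\theta(-1)$ exists, is finite, and that $(1-y^2)U_\theta'\to 0$, and then evaluates (\ref{eq:NSE}) at $y=-1$ to get the quadratic $\tfrac12 U_\theta(-1)^2-2\nu U_\theta(-1)=2c_1$, whose roots are exactly $\tau_1,\tau_2$ (similarly at $y=1$). Your route is instead a self-contained Frobenius analysis at the regular singular points $y=\pm1$ of (\ref{eqH_0_2}), identifying $\tau_1=4\nu\rho_-$ and $\tau_2=4\nu\rho_+$ with the indicial roots $\rho_\pm=\tfrac12\pm\tfrac1{2\nu}\sqrt{\nu^2+c_1}$. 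What this buys: no reliance on the external asymptotic theorem, and, as a by-product, a proof of the oscillation/nonexistence phenomenon for $c_1<-\nu^2$ (zeros of $w$ accumulating at the endpoint) which the paper only records in the remark following the lemma. What it costs: the endpoint analysis genuinely requires the Frobenius bookkeeping you flag in your last paragraph — the coincident-root case $c_1=-\nu^2$ and the case where $\rho_+-\rho_-$ is a positive integer, where logarithmic terms appear — to confirm that $sw'/w$ still converges to an indicial root (in the double-root case one gets $sw'/w\to\tfrac12$ for every nonzero solution, consistent with $\tau_1=\tau_2=2\nu$). These verifications are routine but must be carried out for your proof to be complete; as written they are a sketched plan rather than a finished argument, though a correct one.
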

\begin{proof}
Let $U_{\theta}\in C^1(y_0, y_1)$ be a solution of (\ref{eq:NSE}) and its domain 
on $(-1, 1)$ is $(y_0, y_1)$. Let $\bar{y}\in (y_0, y_1)$ and $\gamma=U_{\theta}(\bar{y})$. 
By Lemma \ref{lem3_1}, there exists some $w\in C^{\infty}(-1, 1)$ satisfying (\ref{eqH_0_2}) on $(-1, 1)$, such that $w\ne 0$ on $(y_0, y_1)$, and $U_{\theta}$ is given by (\ref{eqH_0_1}).   
By Theorem 1.3 in \cite{LLY2018a}, 
  if $y_0=-1$, then $U_{\theta}(-1)=\lim_{y\to - 1^+} U_{\theta}$ exists and is finite, and $\lim_{y\to -1^+}(1-y^2)U'_{\theta}=0$. By this and (\ref{eq:NSE}), we have 
  $U_{\theta}(-1)=\tau_1$ or $\tau_2$, where $\tau_1, \tau_2$ are defined by (\ref{eq_2}). Similarly, if $y_1=1$, then $U_{\theta}(1)=\lim_{y\to 1^-} U_{\theta}$ exists and is finite, 
  and $U_{\theta}(1)=\tau_1'$ or $\tau_2'$, where $\tau_1', \tau_2'$ are defined by (\ref{eq_2}). So (a) holds.
   
  Now we prove (b). We only show $\lim_{y\to y_0^+} U_{\theta}=+\infty$ if $y_0>-1$. The other part can be proved similarly. 
  Note $U_{\theta}$ given by (\ref{eqH_0_1}) is well-defined and is a solution of (\ref{eq:NSE}) at $y\in (-1, 1)$ if and only if $w(y)\ne 0$. 
  If $y_0>-1$, then $U_{\theta}$ is not well-defined at $y_0$ and 
    $w(y_0)=0$. If $w'(y_0)=0$ as well, then $w\equiv 0$ by standard ODE theory.
 Since $w$ is nontrivial,  we have $w'(y_0)\ne 0$, and then $\lim_{y\to y_0^+}U_{\theta}=+\infty$ or $-\infty$. 
  We claim that we must have $\lim_{y\to y_0^+}U_{\theta}=+\infty$. 
  Indeed, if $\lim_{y\to y_0^+}U_{\theta}=-\infty$, then there exist a sequence $y_k\to y_0^+$, such that $U'_{\theta}(y_k)>0$ and $\lim_{k\to \infty}U_{\theta}(y_k)=-\infty$. Then by (\ref{eq:NSE}), we have 
  \[
  \begin{split}
      +\infty>P_c(y_0) & =\lim_{k\to \infty}P_c(y_k) = \lim_{k\to \infty}(\nu (1-y_k^2)U'_{\theta}(y_k)+2\nu y_kU_{\theta}(y_k)
      +\frac{1}{2}U_{\theta}^2(y_k))\\
      & \ge \lim_{k\to \infty}(2y_kU_{\theta}(y_k)+\frac{1}{2}U_{\theta}^2(y_k))=+\infty, 
    \end{split}
  \]
  which is a contradiction. So we have $\lim_{y\to y_0^+}U_{\theta}=+\infty$. 
  Similarly, if $y_1<1$, then  $\lim_{y\to y_1^-}U_{\theta}=-\infty$ and (b) holds. The proof is finished. 
\end{proof}

\begin{rmk}
  Lemma \ref{lem3_3} in particular implies that if $c_1<-\nu^2$, then there is no solution of (\ref{eq:NSE}) on $(-1, -1+\delta)$ for any $\delta>0$. Otherwise Lemma \ref{lem3_3} (a) would imply $U_{\theta}(-1)=-2\nu\pm 2\sqrt{c_1+\nu^2}$, which is not a real number, leading to a contradiction. 
  This implies that $U_{\theta}=U_{\theta}[w]$ for some $w$ with infinite zero points $y_k\in (-1, 1)$ satisfying $\lim_{k\to \infty}y_k=-1$. 
  Similarly, if $c_2<-\nu^2$, then there does not exist a solution of (\ref{eq:NSE}) on $(1-\delta, 1)$ for any $\delta>0$. 
\end{rmk}

By Theorem \ref{thmLLY2_0}, Proposition \ref{prop2_1}, Lemma \ref{lem3_1} and Lemma \ref{lem3_3}, if $c\in J_{\nu}$, then the local solutions of (\ref{eq:NSE}) satisfy the following properties.

\begin{prop}\label{prop3_1}
  Let $c\in J_{\nu}$, 
  and  $U_{\theta}$ be a solution of (\ref{eq:NSE}) whose domain in $(-1, 1)$ is $(y_0, y_1)$ for some $-1\le y_0<y_1\le 1$. 
  Then one of the following holds.\\
  (a) $y_0=-1$, $y_1=1$, $U_{\theta}=U^{c, \gamma}_{\theta}$ for some $\gamma^-(c)\le \gamma\le \gamma^+(c)$. In particular, $\lim_{y\to -1^+} U_{\theta}$ and $\lim_{y\to 1^-} U_{\theta}$ both exist and are finite, satisfying (\ref{eqLLY2_1}).\\
  (b) $-1=y_0<y_1<1$, $U_{\theta}<U_{\theta}^-$, $U_{\theta}(-1)=\lim_{y\to -1^+} U_{\theta}=\tau_1$ and $\lim_{y\to y_1^-} U_{\theta}=-\infty$. \\
  (c) $-1<y_0<y_1=1$, $U_{\theta}>U_{\theta}^+$, $U_{\theta}(1)=\lim_{y\to 1^-} U_{\theta}=\tau_2'$ and $\lim_{y\to y_0^+}U_{\theta}=+\infty$. 
\end{prop}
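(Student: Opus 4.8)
The plan is to combine a \emph{foliation dichotomy} with the endpoint analysis furnished by Lemma~\ref{lem3_3}. The organizing picture is that the global solutions $U_\theta^{c,\gamma}$, $\gamma^-(c)\le\gamma\le\gamma^+(c)$, completely fill the closed strip between the lower solution $U_\theta^-$ and the upper solution $U_\theta^+$ (Proposition~\ref{prop2_1}(i)), so any solution of (\ref{eq:NSE}) must either coincide with one of them or avoid this strip entirely. First I would dispose of the coincidence case: if $U_\theta(\bar y)=U_\theta^{c,\gamma}(\bar y)$ for some $\bar y\in(y_0,y_1)$ and some $\gamma\in[\gamma^-(c),\gamma^+(c)]$, then the uniqueness in Lemma~\ref{lem3_1} forces $U_\theta\equiv U_\theta^{c,\gamma}$ on their common domain, so $U_\theta$ is global with $(y_0,y_1)=(-1,1)$; with Theorem~\ref{thmLLY2_0} this is case~(a), and the boundary values come from (\ref{eqLLY2_1}).

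Otherwise the graph of $U_\theta$ meets no $U_\theta^{c,\gamma}$, so Proposition~\ref{prop2_1}(i) gives $U_\theta(y)\notin[U_\theta^-(y),U_\theta^+(y)]$ for every $y\in(y_0,y_1)$. Since $U_\theta,U_\theta^-,U_\theta^+$ are continuous, the open sets $\{U_\theta<U_\theta^-\}$ and $\{U_\theta>U_\theta^+\}$ are disjoint and cover the connected interval $(y_0,y_1)$, so exactly one of them equals $(y_0,y_1)$. This is the crux of the argument, and I expect the delicate point to be exactly this appeal to Proposition~\ref{prop2_1}(i): it is what prevents a non-global solution from slipping into the foliated strip and lets connectedness promote the pointwise alternative to a global one.

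In the branch $U_\theta<U_\theta^-$ throughout $(y_0,y_1)$ I would argue as follows. If $y_0>-1$, then Lemma~\ref{lem3_3}(b) gives $U_\theta\to+\infty$ as $y\to y_0^+$, contradicting $U_\theta<U_\theta^-$ with $U_\theta^-$ finite near the interior point $y_0$; hence $y_0=-1$. By Lemma~\ref{lem3_3}(a), $U_\theta(-1)\in\{\tau_1,\tau_2\}$, and the value $\tau_2$ is impossible, since Proposition~\ref{prop2_1}(ii) would then force $U_\theta\equiv U_\theta^+$, against $U_\theta<U_\theta^-\le U_\theta^+$; thus $U_\theta(-1)=\tau_1$. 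Finally $y_1<1$, for otherwise $U_\theta$ is global and hence some $U_\theta^{c,\gamma}\ge U_\theta^-$, a contradiction; so Lemma~\ref{lem3_3}(b) gives $\lim_{y\to y_1^-}U_\theta=-\infty$, which is case~(b). The branch $U_\theta>U_\theta^+$ is symmetric: the uniqueness of $U_\theta^-$ near $y=1$ in Proposition~\ref{prop2_1}(ii) forces $U_\theta(1)=\tau_2'$ and $y_0>-1$, and Lemma~\ref{lem3_3}(b) gives $\lim_{y\to y_0^+}U_\theta=+\infty$, which is case~(c).

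Two remarks on the bookkeeping. The configuration $-1<y_0<y_1<1$ is correctly absent from the list because each branch already pins one endpoint: the branch below $U_\theta^-$ forces $y_0=-1$, and the branch above $U_\theta^+$ forces $y_1=1$, while case~(a) has both. The only other place demanding care is separating $\tau_1$ from $\tau_2$ (and $\tau_1'$ from $\tau_2'$): Lemma~\ref{lem3_3}(a) admits either value at a finite endpoint, and it is the sharper one-sided uniqueness of $U_\theta^+$ (resp.\ $U_\theta^-$) in Proposition~\ref{prop2_1}(ii) that selects the one consistent with the side on which $U_\theta$ lies.
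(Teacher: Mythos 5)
Your proof is correct, but it is organized around a different mechanism from the paper's. The paper never uses the foliation statement in Proposition~\ref{prop2_1}(i); its engine is an intermediate-value/uniqueness crossing argument, applied twice. First it rules out $-1<y_0<y_1<1$ directly: such a solution blows up to $+\infty$ at $y_0$ and to $-\infty$ at $y_1$ by Lemma~\ref{lem3_3}(b), so by the intermediate value theorem it must cross the bounded global solution provided by Theorem~\ref{thmLLY2_0}, and ODE uniqueness then forces coincidence with it, a contradiction; this yields the endpoint alternative ($y_0=-1$ or $y_1=1$) \emph{first}. Then, with $y_0=-1$, $y_1<1$ and $U_\theta\to-\infty$ at $y_1$, the ordering $U_\theta<U_\theta^-$ is obtained \emph{last} by the same trick (if $U_\theta$ ever exceeded $U_\theta^-$, it would have to cross $U_\theta^-$ on the way down, forcing $U_\theta\equiv U_\theta^-$). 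You reverse this logic: ordering first, via the foliation of the closed strip between $U_\theta^\pm$ plus connectedness, and endpoints second; the endpoint analysis itself (Lemma~\ref{lem3_3}(a) combined with the one-sided uniqueness in Proposition~\ref{prop2_1}(ii) to select $\tau_1$ over $\tau_2$, resp. $\tau_2'$ over $\tau_1'$) is identical in both proofs. What the paper's route buys is economy of hypotheses: it needs only the existence of one bounded global solution, not the full foliation of the strip. What your route buys is a cleaner one-shot trichotomy. One small repair is needed on your side: Proposition~\ref{prop2_1}(i) is stated only for $c_3>\bar c_3(c_1,c_2;\nu)$, so in the degenerate case $c_3=\bar c_3$, where $\gamma^-(c)=\gamma^+(c)$ and the strip collapses to the single curve corresponding to (\ref{eqLLY2_0_1}), your appeal to it is formally inapplicable; the fact you need there is trivial (avoiding the unique global solution is the same as avoiding the degenerate strip), but it deserves an explicit sentence.
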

\begin{proof}
  Let $U_{\theta}$ be a solution of (\ref{eq:NSE}) in $(y_0, y_1)$, we first show that either $y_0=-1$ or $y_1=1$. Since $c\in J_{\nu}$, by Theorem \ref{thmLLY2_0}, there exists a global solution $\bar{U}_{\theta}\in C^{\infty}(-1, 1)$ of (\ref{eq:NSE}). If $-1<y_0<y_1<1$, then by Lemma \ref{lem3_3} (b), we must have 
  $\lim_{y\to y_0^+}U_{\theta}=+\infty$ and $\lim_{y\to y_1^-}U_{\theta}=-\infty$.
  Since $\bar{U}_{\theta}\in C([y_0, y_1])$ is bounded, there exists some $\tilde{y}\in (y_0, y_1)$, such that $U_{\theta}(\tilde{y})=\bar{U}_{\theta}(\tilde{y})$. By standard ODE theory, we must have $U_{\theta}\equiv \bar{U}_{\theta}$, and thus $y_0=-1$ and $y_1=1$, contradiction. 
  So we have either $y_0=-1$ or $y_1=1$. 
    
  If $y_0=-1$ and $y_1=1$, then $U_{\theta}$ is a global solution of (\ref{eq:NSE}) on $(-1, 1)$. By Theorem \ref{thmLLY2_0}, there exists some $\gamma\in [\gamma^-(c), \gamma^+(c)]$, such that $U_{\theta}=U^{c,\gamma}_{\theta}$. So it satisfies all properties in Proposition \ref{prop2_1}. In particular, $\lim_{y\to -1^+} U_{\theta}$ and $\lim_{y\to 1^-} U_{\theta}$ both exist and are finite, and are given by (\ref{eqLLY2_1}). So (a) holds in this case.
    
  If $y_0=-1$ and $y_1<1$, then by Lemma \ref{lem3_3} (a), we have $U_{\theta}(-1)=\lim_{y\to -1^+} U_{\theta}=\tau_1$ or $\tau_2$. By Proposition \ref{prop2_1} (ii), for any $\delta>0$, there is only one solution $U^+_{\theta}$ of (\ref{eq:NSE}) on $(-1, -1+\delta)$ satisfying $U^+_{\theta}(-1)=\tau_2$. Since $y_1<1$, we have $U_{\theta}\ne U^+_{\theta}$ and thus $U_{\theta}(-1)=\tau_1$. By Lemma \ref{lem3_3} (b), since $y_1<1$, we have $\lim_{y\to y_1^-}U_{\theta}=-\infty$. 
    
  We also claim that $U_{\theta}<U^-_{\theta}$ on $(-1, y_1)$. If not, there is some $-1<\tilde{y}<y_1$, such that $U_{\theta}^-(\tilde{y})<U_{\theta}(\tilde{y})$. Since $U_{\theta}^-\in L^{\infty}([-1, 1])$ and $\lim_{y\to y_1^-}U_{\theta}=-\infty$, there exists some $\tilde{y}<\bar{y}<y_1$, such that $U_{\theta}(\bar{y})=U_{\theta}^-(\bar{y})$. By standard ODE theory, we must have $U_{\theta}\equiv U^-_{\theta}$ and is a global solution on $(-1, 1)$. Contradiction. So we have $U_{\theta}<U^-_{\theta}$ on $(-1, y_1)$, and (b) holds when $y_0=-1$ and $y_1<1$. 

  If $y_0>-1$ and $y_1=1$, then by similar arguments, we have (c) holds. The proof is finished. 
\end{proof}

We may also establish a 1-1 correspondence between all local but not global solutions of (\ref{eq:NSE}) and their blow-up point. 

\begin{prop}
  Let $\nu>0$, $c=(c_1, c_2, c_3)\in \R^3$. \\
  (i) For each $y_0\in (-1, 1)$, there exist some $\delta>0$, depending only on $c, \nu$ and $y_0$, and a unique solution $U_{\theta}$ of (\ref{eq:NSE}) on $(y_0, y_0+\delta)$, such that $\lim_{y\to y_0^+}U_{\theta}=+\infty$. Moreover, these are all the solutions of (\ref{eq:NSE}) on  $(y_0, y_0+\delta)\subset (-1, 1)$ such that $\lim_{y\to y_0^+}U_{\theta}$ does not exist. \\
  (ii) For each $y_1\in (-1, 1)$, there exist some $\delta>0$, depending only on $c, \nu$ and $y_1$, and a unique $U_{\theta}$ of (\ref{eq:NSE}) on $(y_1-\delta, y_1)$, such that $\lim_{y\to y_1^-}U_{\theta}=-\infty$. Moreover, these are all the  solutions of (\ref{eq:NSE}) on $(y_1-\delta, y_1)\subset (-1, 1)$ such that $\lim_{y\to y_1^-}U_{\theta}$ does not exist.
\end{prop}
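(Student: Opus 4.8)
The plan is to reduce everything to the linear representation established in Lemma \ref{lem3_1}: every solution $U_{\theta}$ of (\ref{eq:NSE}) on a subinterval of $(-1,1)$ is of the form $U_{\theta}=U_{\theta}[w]=2\nu(1-y^2)w'/w$ for a nontrivial solution $w\in C^{\infty}(-1,1)$ of the linear equation (\ref{eqH_0_2}), with $U_{\theta}$ defined precisely on the open set where $w\neq 0$. Since $y_0\in(-1,1)$ the leading coefficient $2\nu^2(1-y^2)^2$ of (\ref{eqH_0_2}) does not vanish at $y_0$, so (\ref{eqH_0_2}) is a regular second-order linear ODE near $y_0$ and its solutions are determined by their Cauchy data at $y_0$. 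The guiding observation is that $\lim_{y\to y_0^+}U_{\theta}$ fails to be finite exactly when the associated $w$ vanishes at $y_0$: if $w(y_0)\neq 0$ then $w'/w$ is continuous at $y_0$ and $U_{\theta}$ has the finite limit $2\nu(1-y_0^2)w'(y_0)/w(y_0)$, whereas if $w(y_0)=0$ then $w'(y_0)\neq 0$ (otherwise $w\equiv 0$ by ODE uniqueness) and $w'/w$ is unbounded near $y_0$. This dichotomy drives all three assertions.

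For existence in (i), I would fix the solution $w$ of (\ref{eqH_0_2}) with $w(y_0)=0$ and $w'(y_0)=1$; it is smooth on $(-1,1)$ and, since $w'(y_0)\neq 0$, there is some $\delta>0$, depending only on $c,\nu,y_0$, with $w\neq 0$ on $(y_0,y_0+\delta)$. Setting $U_{\theta}:=U_{\theta}[w]$ on $(y_0,y_0+\delta)$ gives a solution of (\ref{eq:NSE}) by the direct computation in Lemma \ref{lem3_1}. Because $w(y_0)=0$, the maximal domain of $U_{\theta}[w]$ has left endpoint exactly $y_0>-1$, so Lemma \ref{lem3_3}(b) yields $\lim_{y\to y_0^+}U_{\theta}=+\infty$; this also pins down the sign, sparing an explicit asymptotic estimate. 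The construction for (ii) is symmetric, taking $w$ with $w(y_1)=0$ and invoking Lemma \ref{lem3_3}(b) for the $-\infty$ blow-up at the right endpoint.

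For uniqueness and the ``these are all'' clause, I would argue entirely through the representation. If $U_{\theta}$ is any solution on $(y_0,y_0+\delta)$ for which $\lim_{y\to y_0^+}U_{\theta}$ does not exist (equivalently, is not finite), write $U_{\theta}=U_{\theta}[w]$; by the dichotomy above we must have $w(y_0)=0$. Any two such solutions then correspond to solutions $w,\tilde{w}$ of (\ref{eqH_0_2}) vanishing at $y_0$, and the space of solutions of the regular linear equation (\ref{eqH_0_2}) subject to $w(y_0)=0$ is one-dimensional, so $\tilde{w}=\lambda w$ for some $\lambda\neq 0$; since $U_{\theta}[w]$ is invariant under the scaling $w\mapsto\lambda w$, we conclude $\tilde{U}_{\theta}=U_{\theta}$. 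In particular the blow-up solution of the existence step is the unique solution with non-finite limit at $y_0^+$, which is exactly the final assertion. I expect the only genuine subtlety to be bookkeeping: ensuring the $w$ furnished by Lemma \ref{lem3_1} is globally defined and smooth across $y_0$ (guaranteed by $1-y_0^2\neq 0$) and that the domain's left endpoint is truly $y_0$ so that Lemma \ref{lem3_3}(b) applies. The analytic content, namely the existence of blow-up and the determination of its sign, is already packaged in Lemmas \ref{lem3_1} and \ref{lem3_3}.
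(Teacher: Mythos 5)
Your proposal is correct and follows essentially the same route as the paper: both construct the blow-up solution from the solution $w$ of (\ref{eqH_0_2}) with $w(y_0)=0$, $w'(y_0)\neq 0$, obtain the sign of the blow-up from Lemma \ref{lem3_3}(b), and derive uniqueness from the fact that $U_{\theta}$ depends on $w$ only through $w'/w$. The only cosmetic difference is in the uniqueness step, where you invoke one-dimensionality of the subspace $\{w : w(y_0)=0\}$ together with scale invariance of $U_{\theta}[w]$, while the paper reaches the same conclusion by writing $w=C_1w_1+C_2w_2$ in a fundamental system and showing the ratio $C_1/C_2$ is determined by $y_0$.
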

\begin{proof}
  We only prove (i). The proof of (ii) is similar. Let $y_0\in (-1, 1)$ be fixed and $C_0\ne 0$ be a constant. 
  By standard ODE theory, there exists a unique solution $w$ of (\ref{eqH_0_2}) on $(-1, 1)$ satisfying $w(y_0)=0$ and $w'(y_0)=C_0$. 
  Since $w'(y_0)\ne 0$, there exists some $\delta>0$, such that $w\ne 0$ on $(y_0, y_0+\delta)$. 
  By direct computation, 
  $U_{\theta}=U_{\theta}[w]$ defined by (\ref{eqH_0_1}) is a solution of (\ref{eq:NSE}) on $(y_0, y_0+\delta)$. By Lemma \ref{lem3_3}, we have $\lim_{y\to y_0^+}U_{\theta}=+\infty$. 

  Next, we prove the uniqueness of the solution of (\ref{eq:NSE}) in $(y_0, y_0+\delta)$ such that $\lim_{y\to y_0^+}U_{\theta}$ does not exist. 
  Assume $U_{\theta}$ is such a solution. 
  By Lemma \ref{lem3_1} and Lemma \ref{lem3_3}, 
   $U_{\theta}$ is given by (\ref{eqH_0_1}) with some solution $w\in C^{\infty}(-1, 1)$ of (\ref{eqH_0_2}) satisfying $w\ne 0$ on $(y_0, y_0+\delta)$,  
   and  $\lim_{y\to y_0^+}U_{\theta}=+\infty$. 
  This implies $w(y_0)=0$ and $w'(y_0)=C_0$ for some constant $C_0\ne 0$. We show $U_{\theta}$ is uniquely determined by $y_0$ and is independent of $C_0$. To see this, note that $U_{\theta}$ depends only on $w'/w$. Let $w_1, w_2$ be the two fundamental solutions of (\ref{eqH_0_2}), then $w=C_1w_1+C_2w_2$ for some constants $C_1, C_2$ satisfying 
  \begin{equation}\label{eq3_3_1}
    w(y_0)=C_1w_1(y_0)+C_2w_2(y_0)=0, \quad w'(y_0)=C_1w'_1(y_0)+C_2w'_2(y_0)=C_0.
  \end{equation}
  First, $w_1(y_0)$ and $w_2(y_0)$ cannot both be zero, otherwise 
  $w_1(y_0)w_2'(y_0)-w_1'(y_0)w_2(y_0)=0$, which is a contradiction since $w_1, w_2$ are linearly independent fundamental solutions of (\ref{eqH_0_2}). 
  If $w_1(y_0)=0$, then we have $w_2(y_0)\ne 0$, and by the first equation in (\ref{eq3_3_1}), we have $C_2=0$. 
  Then $w=C_1w_1$ 
  and $U_{\theta}=U_{\theta}[w_1]$. 

  If $w_1(y_0)\ne 0$, then we must have $C_2\ne 0$. Otherwise the first equation in (\ref{eq3_3_1}) implies $C_1=0$ as well and then $w\equiv 0$, contradiction. 
  So we have $D:=\frac{C_1}{C_2}=-\frac{w_2(y_0)}{w_1(y_0)}$ is well-defined and 
  $U_{\theta}=U_{\theta}[w]$ is given by (\ref{eqH_0_1}) with $w=Dw_1+w_2$. Note $D$ is uniquely determined by $y_0$, we have  
   $U_{\theta}$ is uniquely determined by the prescribed blow-up point $y_0$. The proof is finished.
\end{proof}

In general, the expressions of the fundamental solutions of (\ref{eqH_0_2}) are not known. However, when $c_1, c_2\ge -\nu^2$, we are able to convert (\ref{eq:NSE}) into some hypergeometric differential equation, whose fundamental solutions are well-known. 

Let $\nu>0$ and $c=(c_1, c_2, c_3)\in\R^3$ satisfying $c_1, c_2\ge -\nu^2$. Denote 
\begin{equation}\label{eq_a}
  a(c,\nu)=-\sqrt{c_1+\nu^2}+\nu, \quad b(c, \nu)= \sqrt{c_2+\nu^2}-\nu, \quad \lambda(c, \nu)=\frac{c_1+c_2}{2}+c_3-ab, 
\end{equation}
\be\label{eq_A}
  A(c, \nu)=\frac{a-b-\nu+ \sqrt{(a-b-\nu)^2-2\lambda}}{2\nu}, B(c, \nu)=\frac{a-b-\nu- \sqrt{(a-b-\nu)^2-2\lambda}}{2\nu}, 
\ee
and $C(c, \nu)=a/\nu$.
\begin{lem}\label{lem3_2}
  Let $\nu>0$, $ c=(c_1, c_2, c_3)\in\R^3$, $c_1, c_2\ge -\nu^2$, and $-1\le y_0< y_1\le 1$. Let $a, b, \lambda$ be defined by (\ref{eq_a}) and $A, B, C$ be defined by (\ref{eq_A}). 
  Then $U_{\theta}\in C^{1}(y_0, y_1)$ is a solution of (\ref{eq:NSE}) on $(y_0, y_1)$ if and only if 
  \begin{equation}\label{eq2_1_1}
    U_{\theta}
    =a(1-y)+b(1+y)+\nu(1-y^2)\frac{\xi'(\frac{1+y}{2})}{\xi(\frac{1+y}{2})}
  \end{equation}
  for some $\xi(z)\in C^{2}(\frac{1+y_0}{2}, \frac{1+y_1}{2})$, 
  where $'=\frac{d}{dz}$, 
  such that $\xi\ne 0$ on  $(\frac{1+y_0}{2}, \frac{1+y_1}{2})$ and 
  \begin{equation}\label{eq2_1_2}
    z(1-z)\frac{d^2\xi}{dz^2}+(C-(A+B+1)z)\frac{d\xi}{dz}-AB \xi=0, \quad 
    \frac{1+y_0}{2}<z<\frac{1+y_1}{2}.
  \end{equation}
\end{lem}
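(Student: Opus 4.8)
The plan is to route the proof through the linear second-order equation (\ref{eqH_0_2}) provided by Lemma \ref{lem3_1}, rather than attacking the Riccati equation (\ref{eq:NSE}) directly. By Lemma \ref{lem3_1}, $U_\theta\in C^1(y_0,y_1)$ solves (\ref{eq:NSE}) if and only if $U_\theta=U_\theta[w]=2\nu(1-y^2)w'/w$ for some $w\in C^2(y_0,y_1)$ solving (\ref{eqH_0_2}) with $w\ne 0$. So it suffices to set up a bijection between such $w$ and functions $\xi$ solving the hypergeometric equation (\ref{eq2_1_2}), compatible with the two formulas (\ref{eqH_0_1}) and (\ref{eq2_1_1}). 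I would realize this bijection through the change of independent variable $z=\tfrac{1+y}{2}$ together with the gauge transformation
\[
  w(y)=(1+y)^{a/(2\nu)}\,(1-y)^{-b/(2\nu)}\,\xi\!\left(\tfrac{1+y}{2}\right),
\]
where $a,b$ are as in (\ref{eq_a}); the hypothesis $c_1,c_2\ge-\nu^2$ is exactly what makes the exponents $a/(2\nu)$ and $-b/(2\nu)$ real, so the prefactor is a smooth, nonvanishing function on the open interval $z\in(\tfrac{1+y_0}{2},\tfrac{1+y_1}{2})\subset(0,1)$. Consequently $w\ne 0$ there if and only if $\xi\ne 0$, and $w\in C^2$ corresponds to $\xi\in C^2$, which will supply the matching of the side conditions in the statement.

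The first and easy half is to check that this gauge sends the velocity formula (\ref{eqH_0_1}) to the claimed formula (\ref{eq2_1_1}). Writing $\tfrac{d}{dy}\log w=\tfrac{a}{2\nu(1+y)}+\tfrac{b}{2\nu(1-y)}+\tfrac12\,\xi'/\xi$ (with $\xi'=d\xi/dz$ and the factor $\tfrac12$ coming from $dz/dy$), and multiplying by $2\nu(1-y^2)$, the identities $(1-y^2)/(1+y)=1-y$ and $(1-y^2)/(1-y)=1+y$ collapse the expression to $a(1-y)+b(1+y)+\nu(1-y^2)\,\xi'/\xi$, which is precisely (\ref{eq2_1_1}). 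I would record this computation first, since it is short and fixes the meaning of $a,b$ as the prefactor exponents.

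The substantive half is to show that, under the same substitution, equation (\ref{eqH_0_2}) becomes exactly the hypergeometric equation (\ref{eq2_1_2}) with the stated $A,B,C$. First I would rewrite (\ref{eqH_0_2}) in the variable $z$, using $1-y^2=4z(1-z)$, $P_c(y)=2c_1(1-z)+2c_2z+4c_3z(1-z)$, and $\tfrac{d^2}{dy^2}=\tfrac14\tfrac{d^2}{dz^2}$, obtaining $8\nu^2z^2(1-z)^2w''-[\,2c_1(1-z)+2c_2z+4c_3z(1-z)\,]w=0$; this is a Fuchsian equation with regular singular points exactly at $z=0,1,\infty$. The indicial roots at $z=0$ are $\tfrac12\pm\tfrac{1}{2\nu}\sqrt{\nu^2+c_1}$ and at $z=1$ are $\tfrac12\pm\tfrac{1}{2\nu}\sqrt{\nu^2+c_2}$; the gauge exponents $a/(2\nu)$ and $-b/(2\nu)$ are precisely the \emph{smaller} of these roots, so after factoring them out the function $\xi$ has indicial exponent $0$ at both finite singular points. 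This forces the $\xi$-equation into hypergeometric form, and reading off the exponent difference at $z=0$ gives $1-C=\sqrt{\nu^2+c_1}/\nu$, i.e.\ $C=a/\nu$, consistent with the definition.

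The main obstacle I expect is the bookkeeping at the point $z=\infty$: I must verify that, after the gauge shift, the exponents at infinity are exactly the $A,B$ of (\ref{eq_A}). Concretely, the Fuchs relation and the remaining indicial data should yield $A+B=\tfrac{a-b-\nu}{\nu}$ (equivalently $C-A-B=\sqrt{\nu^2+c_2}/\nu$, the exponent difference at $z=1$) and $AB=\lambda/(2\nu^2)$, where $\lambda=\tfrac{c_1+c_2}{2}+c_3-ab$ as in (\ref{eq_a}); the latter is where the leading ($c_3$) behavior at infinity combines with the shifts $a/(2\nu)$ and $-b/(2\nu)$, and the precise definition of $\lambda$ is needed to make the discriminant $(a-b-\nu)^2-2\lambda$ under the square roots in (\ref{eq_A}) come out correctly. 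Once $A+B$ and $AB$ are matched to (\ref{eq_A}), the $\xi$-equation is identically (\ref{eq2_1_2}). Combining the three steps—the formula correspondence (\ref{eqH_0_1})$\leftrightarrow$(\ref{eq2_1_1}), the equation correspondence (\ref{eqH_0_2})$\leftrightarrow$(\ref{eq2_1_2}), and the nonvanishing/regularity equivalence of the gauge—together with Lemma \ref{lem3_1} yields the asserted ``if and only if.'' As a cross-check (and an alternative that avoids $w$ altogether), one could instead substitute (\ref{eq2_1_1}) directly into (\ref{eq:NSE}) and reduce using (\ref{eq2_1_2}); I would keep this in reserve to confirm the constants, but present the linear-ODE route as the primary argument.
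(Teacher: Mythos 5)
Your proposal is correct, and it reaches the lemma by a genuinely different route than the paper. The paper works at the Riccati level: it first subtracts the explicit particular solution $U_{\theta,0}=a(1-y)+b(1+y)$ of the companion equation (\ref{eq2_1_0}) (the same equation with $c_3$ replaced by $c_3'=ab-\frac{c_1+c_2}{2}$), observes that $g=U_\theta-U_{\theta,0}$ satisfies the reduced Riccati equation (\ref{eq2_1_3}), and only then applies the logarithmic-derivative substitution $\xi(z)=\exp\int g/(2\nu(1-s^2))\,ds$, verifying by direct computation that the resulting linear equation (\ref{eq2_1_4}) is equivalent to (\ref{eq2_1_2}) under the definitions (\ref{eq_a})--(\ref{eq_A}). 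You perform the two steps in the opposite order: linearize first (via Lemma \ref{lem3_1} and (\ref{eqH_0_2})), then apply the gauge transformation $w=(1+y)^{a/(2\nu)}(1-y)^{-b/(2\nu)}\xi$ to the linear equation; the two decompositions are algebraically conjugate, since your prefactor is exactly $\exp\int U_{\theta,0}/(2\nu(1-s^2))\,ds$. The more substantive difference is how the hypergeometric form is recognized: the paper checks it by brute force, while you invoke the classical uniqueness of second-order Fuchsian equations with singular points contained in $\{0,1,\infty\}$ and match the Riemann scheme, and your exponent bookkeeping is right: $1-C=\sqrt{\nu^2+c_1}/\nu$ gives $C=a/\nu$; $C-A-B=\sqrt{\nu^2+c_2}/\nu$ gives $A+B=(a-b-\nu)/\nu$; and at infinity the identity $(a-b-\nu)^2-2\lambda=\nu^2-2c_3$ gives $AB=\lambda/(2\nu^2)$, consistent with (\ref{eq_A}). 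Your route is more conceptual---it explains where $A,B,C$ come from and why the hypothesis $c_1,c_2\ge-\nu^2$ enters (real exponents)---and it reuses Lemma \ref{lem3_1} instead of redoing the linearization; the paper's computation is more elementary and treats uniformly the degenerate cases (e.g.\ $c_1=-\nu^2$, or exponents differing by an integer), where ``reading off'' distinct indicial roots needs a word of care, although the Fuchsian uniqueness theorem does cover them since it depends only on the indicial polynomials, not on distinctness of their roots. If you write this up, state that uniqueness theorem explicitly (it is the one nontrivial classical input), or fall back on the direct substitution of (\ref{eq2_1_1}) into (\ref{eq:NSE}) that you kept in reserve---which is essentially the paper's own proof.
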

\begin{proof}
  Let $\nu>0$, 
  $c_1, c_2\ge -\nu^2$, and $a, b, \lambda$ be defined by (\ref{eq_a}). Define $c_3'=ab-\frac{c_1+c_2}{2}$ and $U_{\theta,0} = a (1-y) + b (1+y)$. Then $U_{\theta, 0}$ satisfies 
  \be\label{eq2_1_0}
    \nu (1-y^2) U_{\theta, 0}' + 2 \nu y U_{\theta, 0} + \frac{1}{2} U_{\theta, 0}^2 = c_1(1-y) + c_2 (1+y) +c_3' (1-y^2), \quad -1<y<1.
  \ee
  Let $g = U_\theta - U_{\theta,0}$. 
  Note $\lambda=c_3-c_3'$. 
  A direct computation shows that $U_{\theta}$ is a solution of (\ref{eq:NSE}) in $(y_0, y_1)\subset (-1, 1)$ if and only if $g$ is a solution of 
  \begin{equation}\label{eq2_1_3}
    \nu(1-y^2)g' + \Big( (-a+b+2\nu)y + a + b \Big) g + \frac{1}{2}g^2 = \lambda(1-y^2), \quad y_0<y<y_1.
  \end{equation}
  We show $g\in C^1(y_0, y_1)$ is a solution of  the above if and only if $g=\nu(1-y^2)\frac{\xi'(\frac{1+y}{2})}{\xi(\frac{1+y}{2})}$ for some $\xi(z)\in C^{2}(\frac{1+y_0}{2}, \frac{1+y_1}{2})$ satisfying (\ref{eq2_1_2}) and $\xi\ne 0$ on  $(\frac{1+y_0}{2}, \frac{1+y_1}{2})$. 
  
 Assume that $g\in C^1(y_0, y_1)$ satisfies (\ref{eq2_1_3}). Let $\xi(z):=e^{\int_{y_0}^{2z-1}\frac{g(s)}{2\nu (1-s^2)}}ds$. Then $\xi(z)\in C^{2}(\frac{1+y_0}{2}, \frac{1+y_1}{2})$, $\xi\ne 0$ on  $(\frac{1+y_0}{2}, \frac{1+y_1}{2})$, and $g(y)=\nu(1-y^2)\frac{\xi'(\frac{1+y}{2})}{\xi(\frac{1+y}{2})}$,  where the derivative $'=\frac{d}{dz}$. 
  By (\ref{eq2_1_3}), we have 
  \be\label{eq2_1_4}
    z(1-z)\frac{d^2\xi}{d z^2} + \frac{1}{\nu}\Big(a+(b-a)z \Big) \frac{d\xi}{d z} - \frac{\lambda}{2\nu^2} \xi = 0, \quad \frac{1+y_0}{2}<z<\frac{1+y_1}{2}. 
  \ee
  Let $A, B, C$ be defined by (\ref{eq_A}). It can be verified that (\ref{eq2_1_4}) is equivalent to (\ref{eq2_1_2}). 

  On the other hand, assume $\xi(z)$ is a $C^2$ solution of (\ref{eq2_1_2}) satisfying $\xi(z)\ne 0$ on $(\frac{1+y_0}{2}, \frac{1+y_1}{2})$. Then by the definitions of $A, B, C$, we have (\ref{eq2_1_4}). 
  Let $y=2z-1$ and $g=\nu(1-y^2)\frac{\xi'(\frac{1+y}{2})}{\xi(\frac{1+y}{2})}$, 
  then  $g$ satisfies (\ref{eq2_1_3}). 
  Let $U_{\theta}=U_{\theta, 0}+g$. 
  By (\ref{eq2_1_3}) and (\ref{eq2_1_0}), we have $U_{\theta}$ satisfies (\ref{eq:NSE}). 
  The proof is finished. 
\end{proof}

\begin{rmk}
  Note in Lemma \ref{lem3_2}, the constants $a=\tau_1/2$ and $b=\tau_2'/2$. We could also choose $a=\tau_2/2$ or $b=\tau_1'/2$, where $\tau_1, \tau_2, \tau_1', \tau_2'$ are defined in (\ref{eq_2}). 
\end{rmk}

By Lemma \ref{lem3_2}, we have converted (\ref{eq:NSE}) into (\ref{eq2_1_2}). This equation is in the form of the Euler's hypergeometric differential equation, whose fundamental solutions have been well-studied (see, e.g. \cite{OLBC2010,EMOT1953} for general theories about hypergeometric equations). 
For instance, if $C$ is not an integer, then the two fundamental solutions of (\ref{eq2_1_2}) are 
\begin{equation*}
  \xi_1(z)={}_2 F_1 (A,B,C, z), \quad \xi_2(z)=z^{1-C}{}_2 F_1 (1+A-C, 1+B-C, 2-C, z).
\end{equation*}
Here ${}_2 F_1 (A,B,C, z)$ is the hypergeometric series 
${}_2 F_1 (A,B,C, z)=\sum_{n=0}^{\infty}\frac{(A)_n(B)_n}{(C)_c}\frac{z^n}{n!}$. 
where $(q)_n$ denote the rising Pochhanmmer symbol for any $q\in \bC$. We will make use of the representation of $U_{\theta}$ with the help of hypergeometric series to sketch graphs of some examples in the next subsection.

Similar as Lemma \ref{lem3_1}, we have the following expression of the solution of (\ref{eqN_1}) when $c_1, c_2\ge -\nu^2$. 
Let $a, b, A, B, C$ be defined by (\ref{eq_a})-(\ref{eq_A}), and $\xi_1, \xi_2$ be the fundamental solutions of (\ref{eq2_1_2}). 
\begin{cor}\label{cor3_1}
Let $\nu>0$, $c=(c_1, c_2, c_3)\in \R^3$, $c_1, c_2\ge -\nu^2$, $\bar{y}\in (-1, 1)$ and $\gamma\in \R$. 
Let $\tilde{\mu}=\frac{\gamma-a(1-\bar{y})-b(1+\bar{y})}{\nu (1-\bar{y}^2)}$ and $\bar{z}=\frac{1+\bar{y}}{2}$. When $\xi_1'(\bar{z})\ne \tilde{\mu} \xi_1(\bar{z})$, set $\tilde{D}:=-\frac{\xi_2'(\bar{z})-\tilde{\mu} \xi_2(\bar{z})}{\xi_1'(\bar{z})-\tilde{\mu} \xi_1(\bar{z})}$. 
Then there exist some $y_0, y_1$ satisfying $-1\le y_0<\bar{y}<y_1\le 1$, and a unique solution $U_{\theta}\in C^2(y_0, y_1)$ of (\ref{eqN_1}), given by (\ref{eq2_1_1}) with 
   \be
      \xi(z)=\left\{
        \begin{array}{ll}
        \tilde{D}\xi_1(z)+\xi_2(z), & \textrm{ if }\xi_1'(\frac{1+\bar{y}}{2})\ne \tilde{\mu} \xi_1(\frac{1+\bar{y}}{2}),\\
        \xi_1(z),  &\textrm{ if } \xi_1'(\frac{1+\bar{y}}{2})= \tilde{\mu} \xi_1(\frac{1+\bar{y}}{2}). 
        \end{array}
      \right.
   \ee
\end{cor}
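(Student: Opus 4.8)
The plan is to mirror the proof of Lemma \ref{lem3_1}, replacing the representation (\ref{eqH_0_1}) of $U_\theta$ in terms of solutions of (\ref{eqH_0_2}) by the hypergeometric representation (\ref{eq2_1_1})--(\ref{eq2_1_2}) supplied by Lemma \ref{lem3_2}. First I would record that, since the coefficients of the hypergeometric equation (\ref{eq2_1_2}) are real-analytic on the open interval $(0,1)$, the fundamental solutions $\xi_1,\xi_2$ — and hence the candidate $\xi$ defined in the statement — are $C^\infty$ on $(0,1)$, so in particular near $\bar z=\tfrac{1+\bar y}{2}\in(0,1)$. Observe also that (\ref{eqN_1}) is exactly (\ref{eq:NSE}) together with the initial value $U_\theta(\bar y)=\gamma$, so by Lemma \ref{lem3_2} it suffices to produce a nonvanishing $\xi$ solving (\ref{eq2_1_2}) near $\bar z$ for which the resulting $U_\theta$ in (\ref{eq2_1_1}) hits the prescribed value at $\bar y$.

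The central step is to show $\xi(\bar z)\neq 0$, so that (\ref{eq2_1_1}) is well defined near $\bar y$. In the case $\xi_1'(\bar z)\neq\tilde\mu\,\xi_1(\bar z)$, substituting the definition of $\tilde D$ into $\xi=\tilde D\xi_1+\xi_2$ and clearing denominators gives
\[
  \xi(\bar z)=\frac{\xi_1'(\bar z)\xi_2(\bar z)-\xi_2'(\bar z)\xi_1(\bar z)}{\xi_1'(\bar z)-\tilde\mu\,\xi_1(\bar z)},
\]
whose numerator is (minus) the Wronskian of the linearly independent pair $\xi_1,\xi_2$, hence nonzero. In the case $\xi_1'(\bar z)=\tilde\mu\,\xi_1(\bar z)$ we have $\xi=\xi_1$, and were $\xi_1(\bar z)=0$ we would also get $\xi_1'(\bar z)=\tilde\mu\cdot 0=0$, forcing $\xi_1\equiv 0$ by uniqueness for (\ref{eq2_1_2}) — contradicting that $\xi_1$ is a fundamental solution. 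Thus $\xi(\bar z)\neq 0$ in both cases, and by continuity there is some $\epsilon>0$ with $\xi\neq 0$ on $(\bar z-\tfrac{\epsilon}{2},\bar z+\tfrac{\epsilon}{2})$; setting $y_0=\bar y-\epsilon$ and $y_1=\bar y+\epsilon$ makes $U_\theta$ given by (\ref{eq2_1_1}) well defined on $(y_0,y_1)$.

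It then remains to verify that this $U_\theta$ solves (\ref{eqN_1}) and is unique. That $U_\theta$ satisfies the ODE in (\ref{eqN_1}) is immediate from Lemma \ref{lem3_2}, since $\xi$ solves (\ref{eq2_1_2}) and is nonvanishing on $(y_0,y_1)$. For the initial condition, I would read off from (\ref{eq2_1_1}) that $U_\theta(\bar y)=a(1-\bar y)+b(1+\bar y)+\nu(1-\bar y^2)\,\xi'(\bar z)/\xi(\bar z)$; by the choice of $\tilde D$ (first case) or by the defining condition of the second case, one has $\xi'(\bar z)=\tilde\mu\,\xi(\bar z)$, so $U_\theta(\bar y)=a(1-\bar y)+b(1+\bar y)+\nu(1-\bar y^2)\tilde\mu=\gamma$ by the definition of $\tilde\mu$. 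Uniqueness is the same Riccati uniqueness invoked in Lemma \ref{lem3_1}: on any interval compactly contained in $(-1,1)$ the right-hand side of (\ref{eqN_1}) is locally Lipschitz in $U_\theta$, so the solution of the initial value problem is unique, and the regularity $U_\theta\in C^2$ (indeed $C^\infty$) is inherited from that of $\xi$.

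I do not expect a genuine obstacle here: the content is the bookkeeping that translates the initial value $U_\theta(\bar y)=\gamma$ into the single scalar condition $\xi'(\bar z)/\xi(\bar z)=\tilde\mu$, together with the Wronskian identity guaranteeing $\xi(\bar z)\neq 0$. The only place demanding care is the first-case computation of $\xi(\bar z)$ displayed above, where the $\tilde\mu$-terms must be seen to cancel so that the surviving numerator is exactly the nonvanishing Wronskian; the rest is a direct transcription of the argument already carried out for Lemma \ref{lem3_1}.
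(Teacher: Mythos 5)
Your proposal is correct and is essentially the paper's own argument: the paper states Corollary \ref{cor3_1} with no separate proof, introducing it as ``Similar as Lemma \ref{lem3_1},'' i.e.\ precisely the transcription you carry out, replacing the representation (\ref{eqH_0_1})--(\ref{eqH_0_2}) by the hypergeometric one (\ref{eq2_1_1})--(\ref{eq2_1_2}) from Lemma \ref{lem3_2}. Your key computations (the cancellation of the $\tilde\mu$-terms leaving the nonvanishing Wronskian, the verification $\xi'(\bar z)=\tilde\mu\,\xi(\bar z)$ giving $U_\theta(\bar y)=\gamma$, and Riccati uniqueness) match the paper's proof of Lemma \ref{lem3_1} step for step.
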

 
\subsection{Graphs of example solutions}
 
In this subsection, we present several graphs illustrating examples of $(-1)$-homogeneous axisymmetric no-swirl solutions of (\ref{NS}).
Let $J_{\nu}$ be defined by (\ref{eq_J}) and $\gamma^{\pm}(c)$ be defined as in Theorem \ref{thmLLY2_0}. According to the classification of global solutions in Theorem \ref{thmLLY2_0}, we consider the following cases. 
\begin{equation}\label{eq_case}
  \begin{array}{ll}
    \textrm{Case 1. } c_1, c_2>-\nu^2, c_3>\bar{c}_3; & \quad \textrm{Case 2. }c_1=-\nu^2, c_2>-\nu^2, c_3>\bar{c}_3; \\
    \textrm{Case 3. }c_1>-\nu^2, c_2=-\nu^2, c_3>\bar{c}_3; & \quad \textrm{Case 4. }c_1=c_2=-\nu^2, c_3>\bar{c}_3;\\
    \textrm{Case 5. } c_1, c_2\ge -\nu^2, c_3=\bar{c}_3; & \quad  \textrm{Case 6. }c\notin J_{\nu}. 
  \end{array}
\end{equation}
By Theorem \ref{thmLLY2_0}, if $c\in J_{\nu}$ and 
$\gamma^{-}(c)\le \gamma\le \gamma^+(c)$, 
then there exist a unique solution $U_{\theta}^{c, \gamma}(y)$ of (\ref{eq:NSE}) in $(-1, 1)$ satisfying $U_{\theta}(0)=\gamma$. 
If $c\in J_{\nu}$ and $\gamma\notin [\gamma^-(c), \gamma^+(c)]$, then there exists a solution of (\ref{eq:NSE}) satisfying $U_{\theta}^{c, \gamma}(0)=\gamma$ in a neighborhood of $y=0$, which blow up before $y$ reaches $-1$ or $1$. 
On the other hand, if $c\notin J_{\nu}$, then (\ref{eq:NSE}) admits only local solutions, and no global solution exists in this case. 
Using the representation of solutions in Lemma \ref{lem3_2} and Corollary \ref{cor3_1}, we present graphical illustrations of solution profiles for each of the six cases listed above. 

\medskip

\noindent\textbf{a. Structure of global and local solutions}
 
By Theorem \ref{thmLLY2_0}, for each fixed $c\in J_{\nu}$, the global solutions of (\ref{eq:NSE}) on $(-1, 1)$ are given by a one-parameter family $U^{c, \gamma}$. In Case 1-4, where $c_3>\bar{c}_3$, there is an upper solution $U^{+}_{\theta}(y)$ and a lower solution $U^-_{\theta}(y)$ in $C([-1, 1])\cap C^2(-1, 1)$. All other global solutions $U^{c, \gamma}(y)$ with $\gamma^-<\gamma<\gamma^+$ foliate the region between $U_{\theta}^{\pm}$, satisfying $U^{-}_{\theta}(y)<U^{c, \gamma}(y)<U^{+}_{\theta}(y)$ for $(-1, 1)$. 
Moreover, the boundary values $U_{\theta}(\pm 1)$ exist and are given by (\ref{eqLLY2_1}). In Case 5, where $c_3=\bar{c}_3$, there is only one global solution $U^*_{\theta}=\sin\theta u^*_{\theta}$ given through (\ref{eqLLY2_0_1}). In Case 6, where $c\notin J_{\nu}$, there is no global solution. 

Except $\{U^{c, \gamma}\mid (c, \gamma)\in I_{\nu}\}$, all other solutions of (\ref{eq:NSE}) are local solutions existing on a proper subinterval of $(-1, 1)$. There are three types of local solutions:
\medskip

\noindent \textbf{(a1)} $U_{\theta}\in C[-1, y_1)$ for some $y_1\in (-1, 1)$, $U_{\theta}(-1)=0$, $U_{\theta}<U^-_{\theta}$, $\lim_{y\to y_1^-}U_{\theta}=-\infty$;\\
\noindent\textbf{(a2)} $U_{\theta}\in C(y_0, 1]$ for some $y_0\in (-1, 1)$, $U_{\theta}(1)=0$, $U_{\theta}>U^+_{\theta}$, $\lim_{y\to y_0^+}U_{\theta}=+\infty$; \\
\noindent\textbf{(a3)} $U_{\theta}\in C(y_0, y_1)$ for $y_0, y_1\in (-1, 1)$, 
$\lim_{y\to y_0^+}U_{\theta}=+\infty$ and $\lim_{y\to y_1^-}U_{\theta}=-\infty$.

\medskip
If $c\in J_{\nu}$, then by Proposition \ref{prop3_1}, any non-global solution must satisfy either condition (a1) or (a2). If $c\notin J_{\nu}$, then solutions satisfying (a3) may occur. 

In the following, we provide some examples of solutions for $c\in \R^3$ in Case 1-6, and sketch the graphs and streamlines of some solutions. 

\medskip
 
\noindent \textbf{Case 1.} $c_1, c_2>-\nu^2$, $c_3>\bar{c}_3$. 
In this case, we have 
\be\label{eqcase1_1}
  U^{c, \gamma}(-1)=U_{\theta}^-(-1)<U^{+}_{\theta}(-1), \quad U^{c, \gamma}(1)=U_{\theta}^+(1)>U^{-}_{\theta}(1), \quad \gamma^-<\gamma<\gamma^+. 
\ee
All local solutions in this case satisfy either (a1) or (a2). 
\begin{example}\label{ex1}
  $\nu=1, c_1=c_2=0, c_3=0.5>\bar{c}_3$. 

  In \cite{LLY2024} (see example 3.2 there), we have obtained explicit but not elementary expressions for the solutions in this case by solving the corresponding hypergeometric equation (\ref{eq2_1_2}), where 
  \[
    u_\theta = \frac{ \sin\theta \big( K( \cos^2\frac{\theta}{2} ) - \alpha K( \sin^2\frac{\theta}{2} ) \big) }{2 \big( E( \cos^2\frac{\theta}{2} ) - \sin^2\frac{\theta}{2} K( \cos^2\frac{\theta}{2} ) + \alpha E( \sin^2\frac{\theta}{2} ) - \alpha \cos^2\frac{\theta}{2} K( \sin^2\frac{\theta}{2} ) \big) },
  \]
  for some constant $\alpha\in R$ satisfying $\gamma=\frac{K(1/2)(1-\alpha)}{(2E(1/2)-K(1/2))(1+\alpha)}$, and $K(x)$ and $E(x)$ are respectively the complete elliptic integrals of the first and second kind
  \[
    K(x) = \int_0^{\pi/2} \frac{1}{\sqrt{1-x \sin^2\theta} } d \theta, \qquad 
    E(x) = \int_0^{\pi/2} \sqrt{1-x \sin^2\theta d\theta}, \qquad 0<x<1. 
  \]
  If $0\le \alpha\le \infty$, then the solution is in $C^2(\mathbb{S}^2\setminus\{S, N\})$. 
  If $0<\alpha<\infty$, then $u=u^{c, \gamma}$ with $\gamma^-<\gamma<\gamma^+$, which is of Type 2 behavior as $x\to S$ or $N$ on $\bS^2$. 
  If $\alpha=0$, then $u=u^+(c)$, which is is of Type 3 behavior as $x\to S$ and Type 2 behavior as $x\to N$. If $\alpha=+\infty$, then $u=u^-(c)$, which is is of Type 2 behavior as $x\to S$ and Type 3 behavior as $x\to N$. 
  If $\alpha<0$, then the above solution is a local solution near $N$ or $S$ on $\mathbb{S}^2$. 
\end{example}
We present in Figure \ref{fig:case1} the graphs of the upper and lower solutions $U^{\pm}_{\theta}(y)$, some 
global solutions $U^{c, \gamma}_{\theta}(y)$ for $(c, \gamma)\in I_{\nu}$, and several local solutions with $c\in J_{\nu}$ and $\gamma\notin [\gamma^-, \gamma^+]$. All graphs are plotted over the interval $y\in [-1, 1]$. 
In particular, $U^{c, \gamma}(\pm 1)$ satisfy (\ref{eqcase1_1}), where 
$U_{\theta}^+(-1)=4$, $U_{\theta}^+(1)=0$, $U_{\theta}^-(-1)=0$, $U_{\theta}^-(1)=-4$. For $\gamma\in (\gamma^-, \gamma^+)$, we have $U_{\theta}^{c, \gamma}(-1)=U_{\theta}^{c, \gamma}(1)=0$. Moreover, $U_{\theta}^{\pm}$ are both Type 3 solutions. 
All $U_{\theta}^{c, \gamma}$ with $\gamma\in (\gamma^-, \gamma^+)$ are Type 2 solutions. 

In Figure \ref{fig:case1:1}, we plot the graphs of $u^{c, \gamma}_{\theta}, u^{c, \gamma}_{r}, U^{c, \gamma}_{\theta}, U^{c, \gamma}_{r}$ in the variable $\theta$ over the interval $(0, \pi)$. We also plot the streamlines of the corresponding velocity field in the meridional plane $(x_1, x_3)$, for various values of $\gamma$. These $2$D streamlines can be rotated about the $x_3$-axis to visualize the full $3$D axisymmetric flow. In particular, the last two plots in Figure \ref{fig:case1:1} illustrate local solutions that blow up at some angle $\theta=\theta_0$ as shown in the pictures, before the flow reaching either pole.

\begin{figure}[!htb]
	\centering
	\includegraphics[height=4cm]{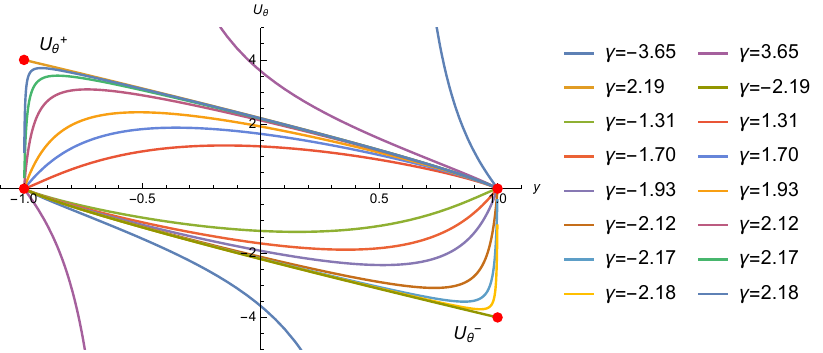}
	\caption{\small{The graphs of $U_{\theta}(y)$ in Example \ref{ex1} ($c_1=c_2=0, c_3=0.5$, Case 1).}}
	\label{fig:case1}
\end{figure}
\begin{figure}[!htb]
	\centering
	\includegraphics[height=3.75cm]{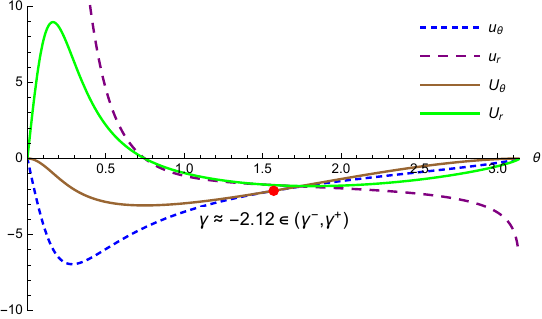}\hspace{1cm}
	\includegraphics[height=4cm]{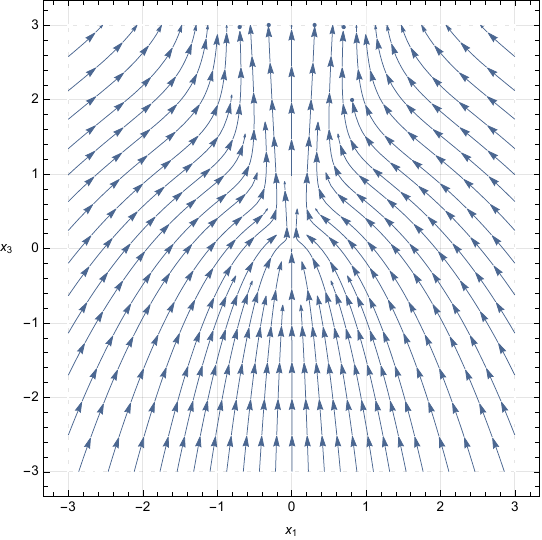}\\
	\includegraphics[height=3.75cm]{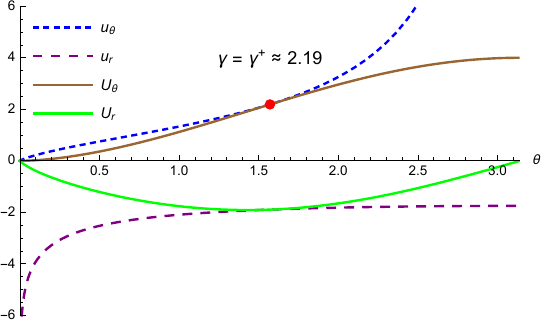}\hspace{1cm}
	\includegraphics[height=4cm]{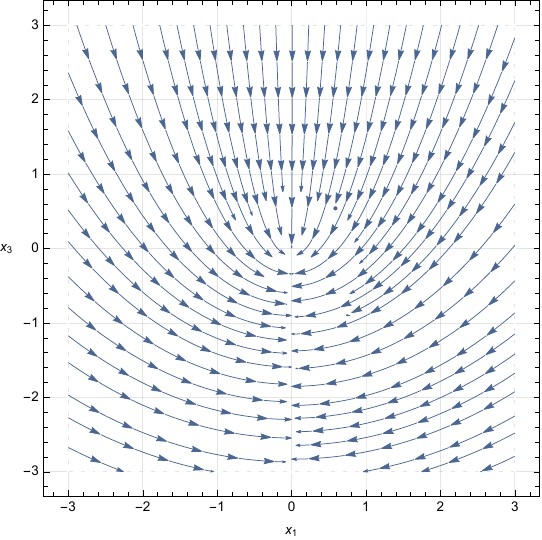}\\
	\includegraphics[height=3.75cm]{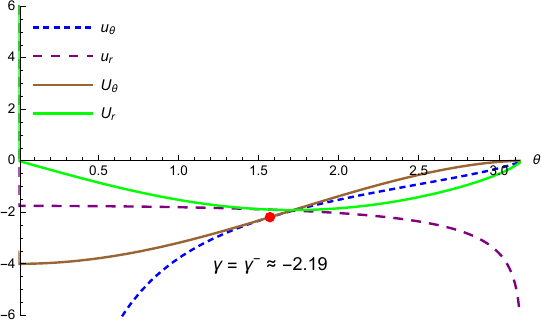}\hspace{1cm}
	\includegraphics[height=4cm]{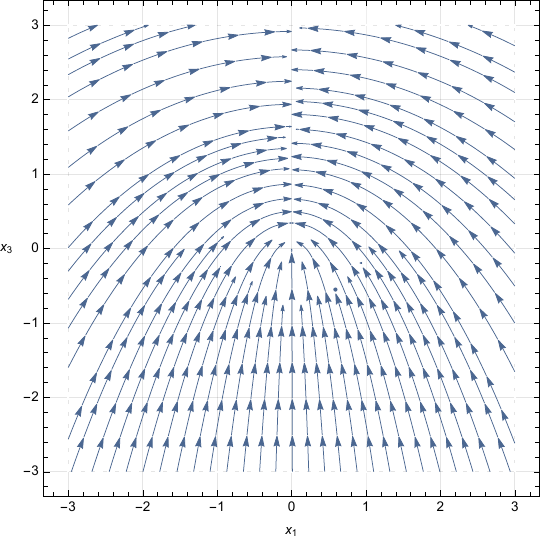}\\
	\includegraphics[height=3.75cm]{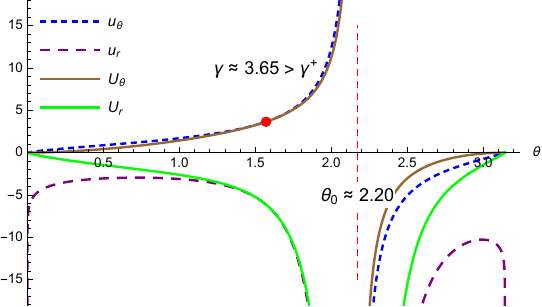}\hspace{1cm}
	\includegraphics[height=4cm]{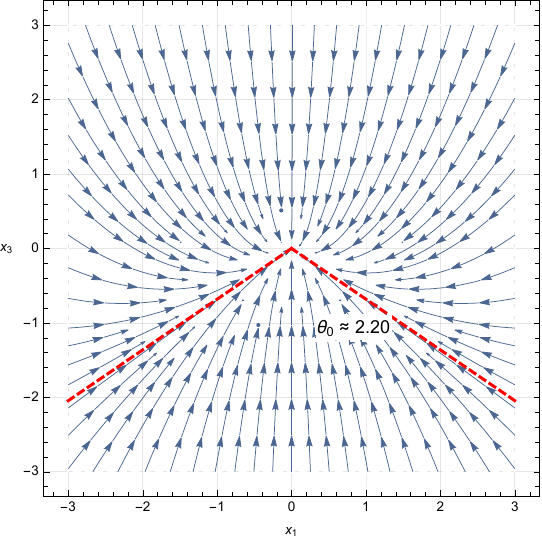}\\
	\includegraphics[height=3.75cm]{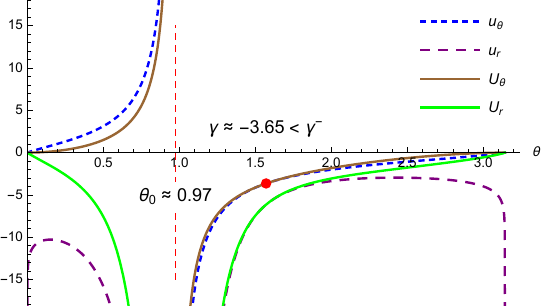}\hspace{1cm}
	\includegraphics[height=4cm]{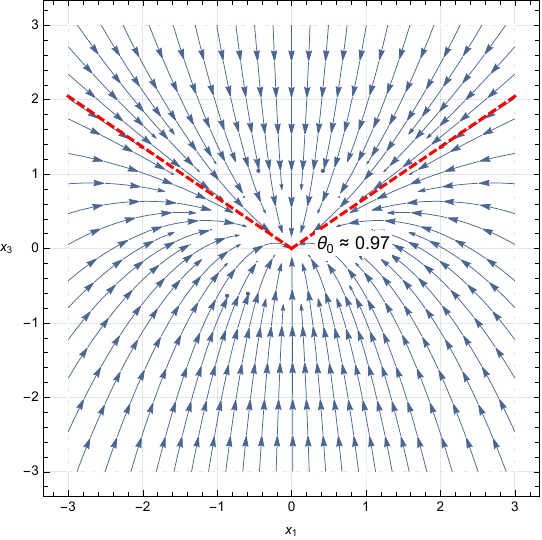}
	\caption{\small{The graphs and streamline of $u^{c, \gamma}$ in Example \ref{ex1} ($c_1=c_2=0, c_3=0.5$, 
	Case 1).}}
	\label{fig:case1:1}
\end{figure}

\medskip

\noindent\textbf{Case 2.} $c_1=-\nu^2, c_2>-\nu^2$, $c_3>\bar{c}_3$. 
In this case, we have 
\be\label{eqcase2_1}
  U^{c, \gamma}(-1)=U_{\theta}^{\pm}(-1)=2\nu, \quad U^{c, \gamma}(1)=U_{\theta}^+(1)>U^{-}_{\theta}(1), \quad \gamma^-<\gamma<\gamma^+. 
\ee
All local solutions are satisfy either (a1) or (a2), and all global solutions are of Type 3. 

\begin{example}\label{ex2_1}
  $\nu=1$, $c_1=-1, c_2=8, c_3=-1.5>\bar{c}_3$. In Figure \ref{fig:case3} (a), we present  the graphs of $U^{\pm}_{\theta}$, some global solutions $U^{c, \gamma}_{\theta}$ for $(c, \gamma)\in I_{\nu}$, and some local solutions near $-1$ and $1$. 
  The boundary values of $U^{\pm}_{\theta}$ are $U_{\theta}^+(-1)=U_{\theta}^-(-1)=2$, $U_{\theta}^+(1)=4$, $U_{\theta}^-(1)=-8$. For $\gamma\in (\gamma^-, \gamma^+)$, we have $U_{\theta}^{c, \gamma}(-1)=2$ and $U_{\theta}^{c, \gamma}(1)=4$. 
\end{example}

\noindent\textbf{Case 3.} $c_1>-\nu^2, c_2=-\nu^2$, $c_3>\bar{c}_3$. 
In this case, we have 
\be\label{eqcase3_1}
  U^{c, \gamma}(-1)=U_{\theta}^{-}(-1)<U_{\theta}^{+}(-1), \quad U^{c, \gamma}(1)=U_{\theta}^{\pm}(1)=-2\nu, \quad \gamma^-<\gamma<\gamma^+. 
\ee
All local solutions satisfy either (a1) or (a2). All global solutions are of Type 3. 
 
\begin{example}\label{ex3_1}
$\nu=1$, $c_1=8, c_2=-1, c_3=-1.5>\bar{c}_3$. In Figure \ref{fig:case3} (b),  we present the graphs of $U^{\pm}_{\theta}$, some global solutions $U^{c, \gamma}_{\theta}$ for $(c, \gamma)\in I_{\nu}$, and some local solutions near $-1$ and $1$. 
The boundary values of $U^{\pm}_{\theta}$ are $U_{\theta}^+(-1)=8$, $U_{\theta}^-(-1)=-4$, $U_{\theta}^+(1)=U_{\theta}^-(1)=-2$. For $\gamma\in (\gamma^-, \gamma^+)$, we have $U_{\theta}^{c, \gamma}(-1)=-4$, $U_{\theta}^{c, \gamma}(1)=-2$. 
\end{example}

\begin{figure}[!htb]
	\centering
	(a)\includegraphics[height=3.75cm]{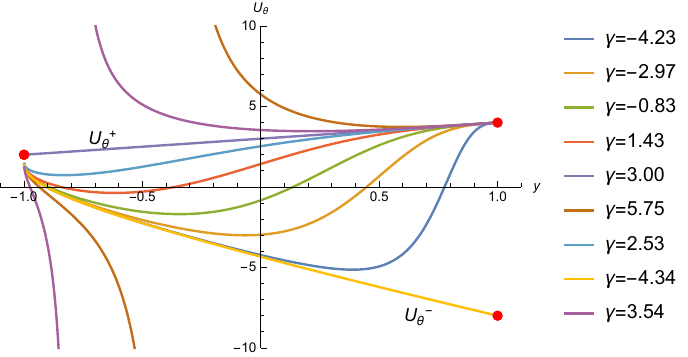}
	(b)\includegraphics[height=3.75cm]{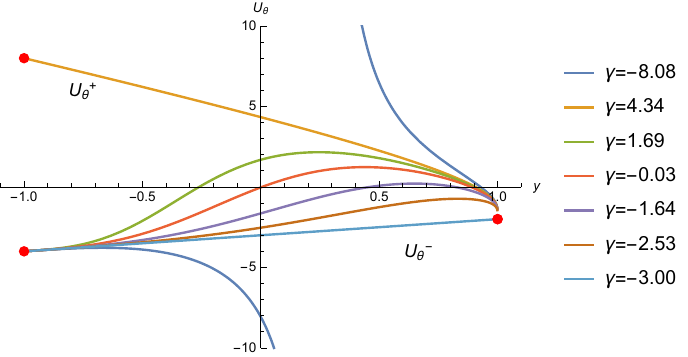}
	\caption{\small{(a) The graphs of $U_{\theta}(y)$ in Example \ref{ex2_1}; (b) The graphs of $U_{\theta}(y)$ Example \ref{ex3_1}}}
	\label{fig:case3}
\end{figure}

\medskip 

\noindent\textbf{Case 4.} $c_1=c_2=-\nu^2$, $c_3>\bar{c}_3$. 
In this case, $U^{c, \gamma}(-1)=2\nu$ and $U^{c, \gamma}(1)=-2\nu$ for all $\gamma^-\le \gamma\le \gamma^+$. 
All local solutions satisfy either (a1) or (a2). All global solutions are of Type 3. 

\begin{example}\label{ex4_1}
  $\nu=1$, $c_1=c_2=-1, c_3=0.5>\bar{c}_3$. We present in Figure \ref{fig:case4} the graphs of $U^{\pm}_{\theta}$, some global solutions $U^{c, \gamma}_{\theta}$ for $(c, \gamma)\in I_{\nu}$, and some local solutions near $-1$ and $1$. In particular, $U_{\theta}^{c, \gamma}(-1)=2$ and $U_{\theta}^{c, \gamma}(1)=-2$ for all $\gamma\in [\gamma^-, \gamma^+]$.
\end{example}

\begin{figure}[!htb]
	\centering
	\includegraphics[height=4cm]{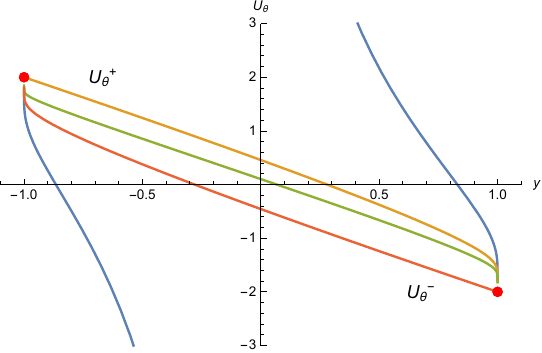}
	\caption{\small{The graphs of $U_{\theta}(y)$ in Example \ref{ex4_1} ($c_1=-1, c_2=-1, c_3=0.5$, Case 4)}}
	\label{fig:case4}
	\vspace{-0.5cm}
\end{figure}

\noindent\textbf{Case 5.} $c_3=\bar{c}_3(c_1,c_2,\nu)$.
 In this case, there is a unique global solution 
\begin{equation}\label{eq5_1}
  U_{\theta}^+(c)\equiv U^-_{\nu, \theta}(c)\equiv U^*_{\theta}(c_1,c_2):= (\nu+\sqrt{\nu^2+c_1})(1-y)+(-\nu-\sqrt{\nu^2+c_2})(1+y).
\end{equation}
This solution is of Type 3, satisfying 
$
  U_{\theta}^*(-1)=\tau_2=2\nu+2\sqrt{\nu^2+c_1}$ and $U_{\theta}^*(1)=\tau_1'=-2\nu-2\sqrt{\nu^2+c_2}. 
$
All other solutions $U_{\theta}$ in this case are local solutions with satisfying either (a1) or (a2). 
The values of $U_{\theta}(\pm 1)$ exhibit different behavior when $c_1, c_2>-\nu^2$ or $c_1=-\nu^2$ or $c_2=-\nu^2$. We provide an example in each case. 

\begin{example}\label{ex5_1}
  (a) $\nu=1$, $c_1=c_2=0, c_3=\bar{c}_3=-4$; \\
  (b) $\nu=1$, $c_1=-1, c_2=8, c_3=\bar{c}_3=-7.5$; \\
  (c) $c_1=8, c_2=-1, c_3=\bar{c}_3=-7.5$; \\
  (d) $c_1=c_2=-1, c_3=\bar{c}_3=0$. \\
  See Figure \ref{fig:case5:1} for the graphs of $U_{\theta}^{c, \gamma}(y)$ in (a)-(d). 
\end{example}
\begin{figure}[!htb]
\vspace{-0.5cm}
	\centering
	(a) \includegraphics[height=3.75cm]{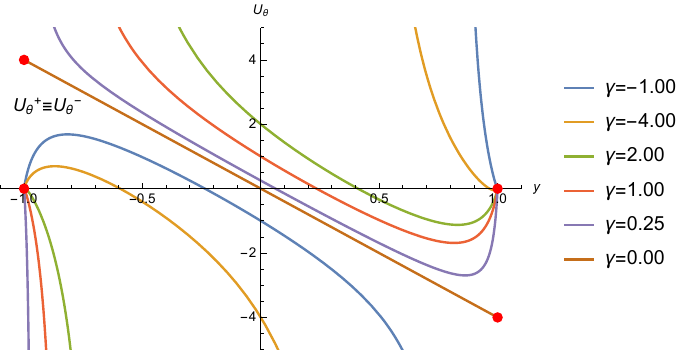}
	(b) \includegraphics[height=3.75cm]{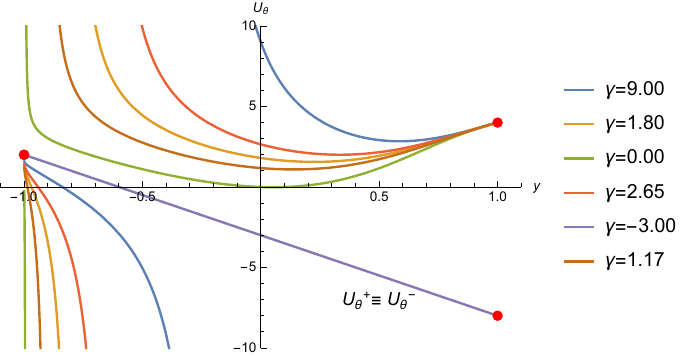}\\
	(c) \includegraphics[height=3.75cm]{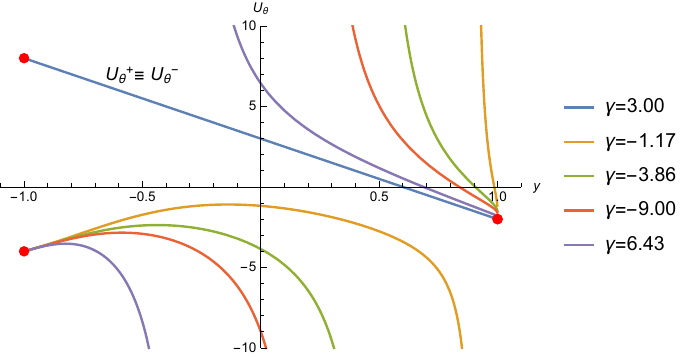}
	(d) \includegraphics[height=3.75cm]{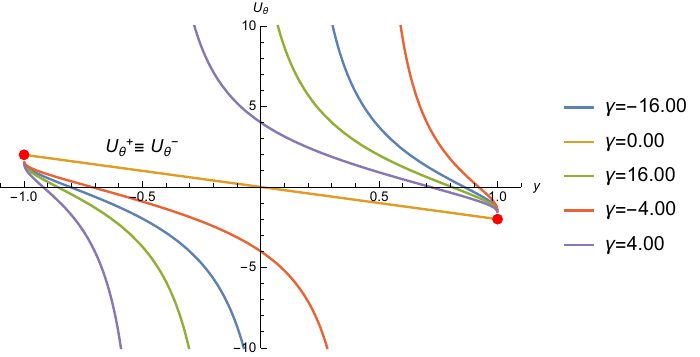}
	\caption{\small{The graphs of $U_{\theta}(y)$ in Example \ref{ex5_1} ($c_3=\bar{c}_3$, Case 5)}}
	\label{fig:case5:1}
\end{figure}

\noindent\textbf{Case 6.} $c\notin J_{\nu}$. In this case, no global solution exists. There are three types of local solutions satisfying (a1)-(a3) as described above. Here are some more details about their behaviors.  

\medskip

\noindent \textbf{(a1)} Solutions exist on $(-1, y_1)$ for some $y_1<1$ and $\lim_{y\to y_1^-}U_{\theta}=-\infty$. Among these solutions, there is an upper solution $U_{\theta}^+$, which has the largest domain and satisfies $U_{\theta}^+(-1)=\tau_2$. 
Any other (a1) solution satisfies $U_{\theta}<U^+_{\theta}$ in the intersection of their respective domains, and $U_{\theta}(-1)=\tau_1$. 

\noindent \textbf{(a2)} Solutions exist on on $(y_0, 1)$ for some $-1<y_0<1$ and $\lim_{y\to y_0^+}U_{\theta}=+\infty$. Among these solutions, there is an lower solution $U^-_{\theta}$, which has the largest domain and satisfies $U_{\theta}^-(1)=\tau_1'$. 
Any other (a2) solution satisfies $U_{\theta}>U^-_{\theta}$ in the intersection of their respective domains,  and $U_{\theta}(1)=\tau_2'$. 

\noindent \textbf{(a3)} Solutions exist on $(y_0, y_1)\subset\subset (-1, 1)$, with $\lim_{y\to y_0^+}U_{\theta}=+\infty$ and $\lim_{y\to y_1^-}U_{\theta}=-\infty$. The graphs of such solutions lie above all (a1) solutions and below all (a2) solutions within the intersection of their respective domains. 

\begin{example}\label{ex6_1}
  (a) $\nu=1$, $c_1=c_2=0$, $c_3=-12<\bar{c}_3$. \\
  (b) $\nu=1$, $c_1=-1, c_2=8$, $c_3=-17.5<\bar{c}_3$.\\
  (c) $\nu=1$, $c_1=8, c_2=-1$, $c_3=-17.5<\bar{c}_3$.\\
  (d) $\nu=1$, $c_1=c_2=-1, c_3=-1.5<\bar{c}_3$. \\
  See Figure \ref{fig:case6} for the graphs of $U_{\theta}$. 
\end{example}
\begin{figure}[!htb]
\vspace{-0.5cm}
	\centering
	(a) \includegraphics[height=3.5cm]{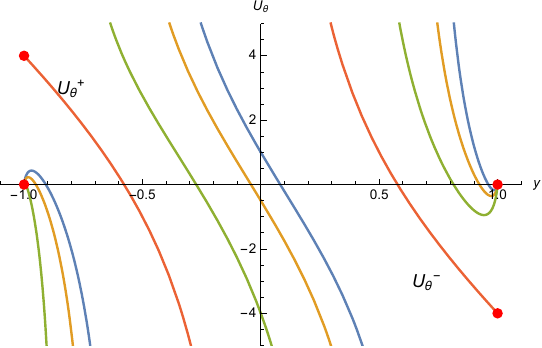}\hspace{1cm}
	(b) \includegraphics[height=3.5cm]{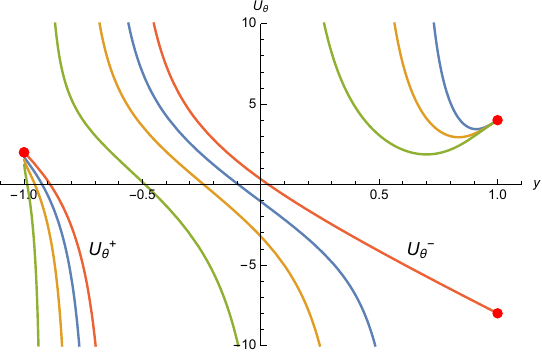}\\
	(c) \includegraphics[height=3.5cm]{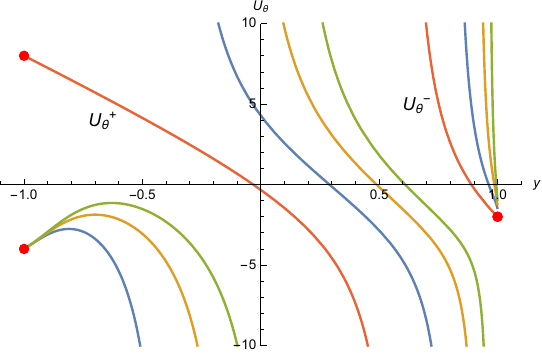}\hspace{1cm}
	(d)\includegraphics[height=3.5cm]{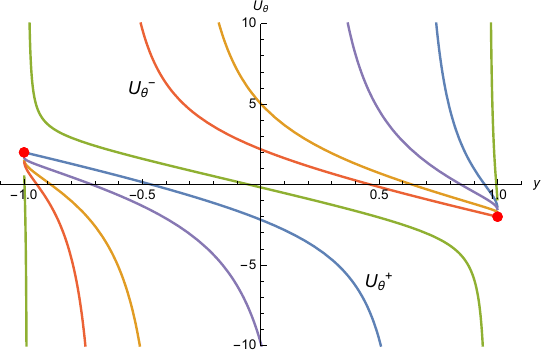}
	\caption{\small{The graphs of $U_{\theta}(y)$ in Example \ref{ex6_1} ($c_3<\bar{c}_3$, no global solution, Case 6)}}
	\label{fig:case6}
\end{figure}

\medskip

\noindent\textbf{b. Vanishing viscosity limit} 

In Section \ref{sec2_3}, we discussed the behavior of $(-1)$-homogeneous axisymmetric no-swirl solutions of (\ref{NS}) as $\nu\to 0$. In particular, it was shown in \cite{LLY2019b} that some sequences of solutions $(u_{\nu_k}, p_{\nu_k})$ of (\ref{NS}) converge in the $C^m_{loc}$ topology to $(v^{\pm}_c, q^{\pm}_c)$ on $\bS^2\setminus\{S, N\}$ as $\nu_k\to 0$, while for other sequences of solutions, transition layer behaviors occur. Specifically, for every latitude circle, there are sequences $(u_{\nu_k}, p_{\nu_k})$ which $C^m_{loc}$ converge respectively to different solutions of the Euler equations on the spherical caps above and below the latitude circle. Below we present an example exhibiting such transition layer behavior. 

\begin{example}\label{exb_1}
  Let $c_1=\frac{25}{9}, c_2=\frac{1}{9}, c_3=-2$. Then (\ref{eq:NSE}) reads as 
  \[
    \nu (1-y^2)U'_{\nu, \theta}+2\nu yU_{\nu, \theta}+\frac{1}{2}U^2_{\nu, \theta}=P_c(y)=2(y-\frac{2}{3})^2.
  \] 
  As $\nu\to 0$, the above equation tends to the Euler's equation for $(-1)$-homogeneous axisymmetric solutions
  $
    \frac{1}{2}V^2_{\theta}=2(y-\frac{2}{3})^2,
  $
  where $V_{\theta}=\sin\theta v_{\theta}$. 
  There are two solutions 
  $
    V_{\theta}^1=2(y-\frac{2}{3})$ and $V_{\theta}^2=-2(y-\frac{2}{3})
  $ in $C^\infty(\mathbb{S}^2\setminus\{S, N\})$ 
  Define 
  $
    V_{\theta}^+ := 2|y-\frac{2}{3}|$ and $V_{\theta}^- := -2|y-\frac{2}{3}|. 
  $ Note $V_{\theta}^{\pm}$ are not smooth at $y_0=2/3$ ($\theta_0=\cos^{-1}(2/3)$). Correspondingly, the streamlines of $v^{\pm}$ are not continuous along $\theta=\theta_0$ 
  (see Figure \ref{fig:ex7_3}). 
  As $\nu \to 0$, the graphs of $U_{\nu, \theta}$ and $V_{\theta}^{1,2}$ is displayed in Figure \ref{fig:ex7_1}. 
\end{example}
\begin{figure}[!htb]
	\centering
    \includegraphics[height=4.5cm]{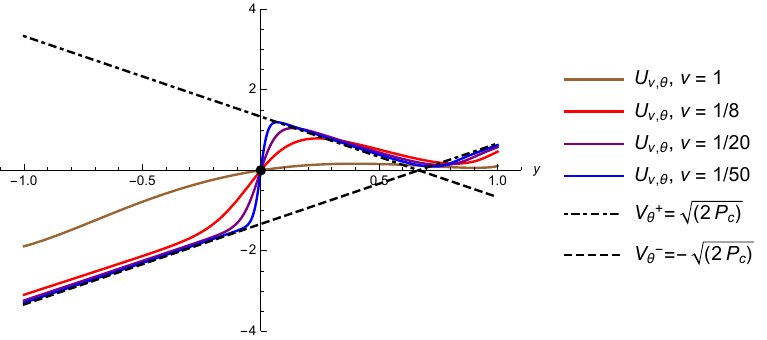}
  	\caption{\small{The graphs of $U_{\nu, \theta}(y)$ and $V_{\theta}^{\pm}$ in Example \ref{exb_1}}}
  	\label{fig:ex7_1}
\end{figure}
In Figure \ref{fig:ex7_1}, we can see that as $\nu\to 0$, $U_{\nu, \theta}(y)\to V_{\theta}^+(y)$ for $y$ in $(0, 1)$, and $U_{\nu, \theta}(y)\to V_{\theta}^-(y)$ for $y$ in $(-1, 0)$. There is a jump discontinuity at $y_1=0$ ($\theta_1=\pi/2$). This means 
$u_{\nu}$ tends to $v^+$ on the upper half sphere, and $u_{\nu}\to v^{-}$ on the lower half sphere (see this phenomenon in Figure \ref{fig:ex7_2} and \ref{fig:ex7_3} of the streamlines of $u_{\nu}$ and $v^{\pm}$).

\begin{figure}[!htb]
	\centering
	(a) \includegraphics[height=4cm]{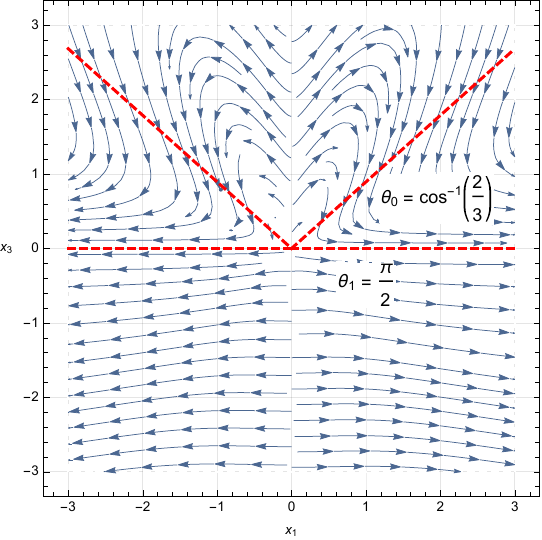} \hspace{1cm} (b) \includegraphics[height=4cm]{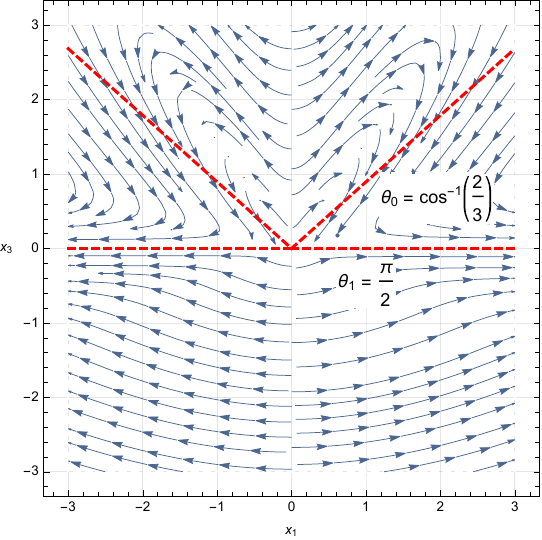}\\
	(c) \includegraphics[height=4cm]{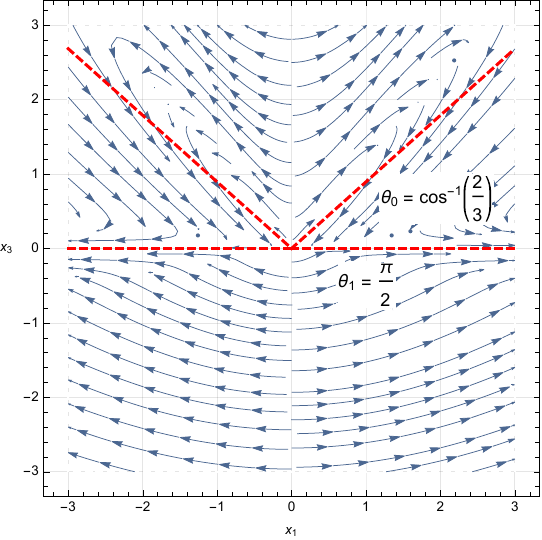} \hspace{1cm} (d) \includegraphics[height=4cm]{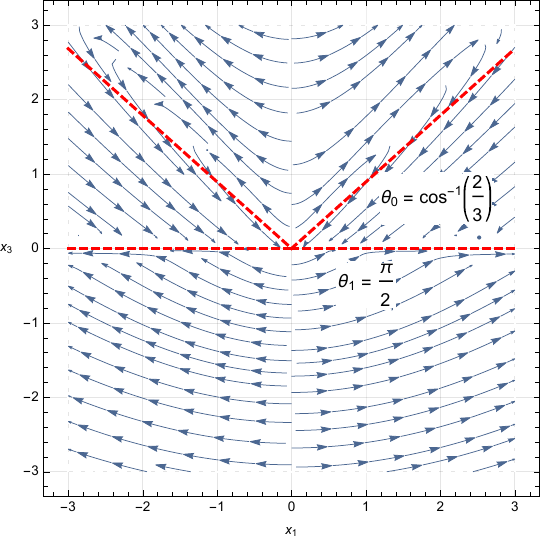}
	\caption{\small{The streamlines of $u_{\nu}$ for (a) $\nu=1$, (b) $\nu=1/8$, (c) $\nu=1/20$, (d) $\nu=1/50$.}} 
	\label{fig:ex7_2}
\end{figure}

\begin{figure}[!htb]
	\centering
	(a) \includegraphics[height=4cm]{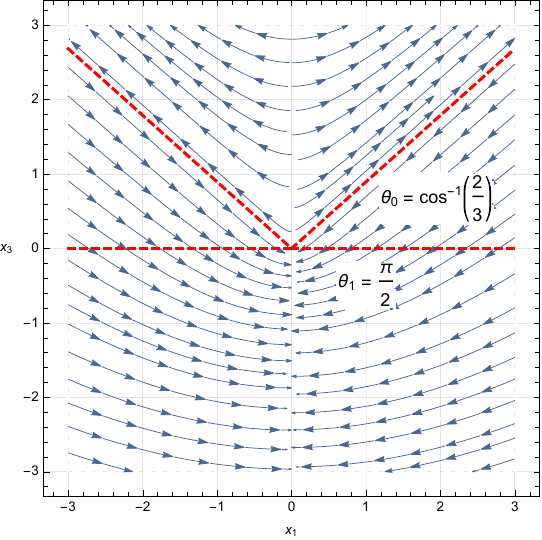} \hspace{1cm} (b) \includegraphics[height=4cm]{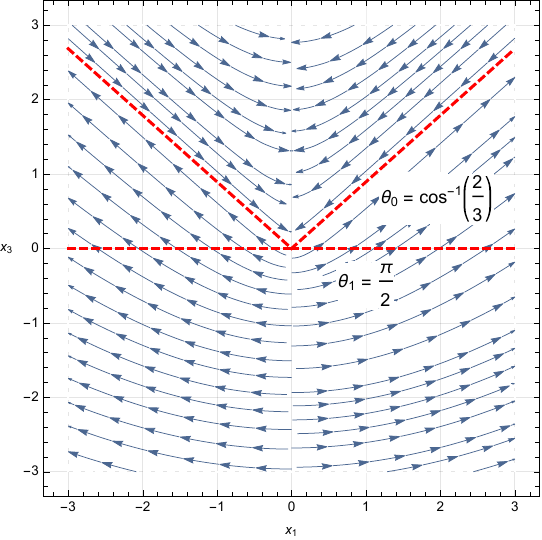}\\
	\caption{\small{(a) The streamlines of $v^{+}$; (b) The streamlines of $v^{-}$.}} 
	\label{fig:ex7_3}
\end{figure}

\FloatBarrier

\bibliographystyle{plain}
\bibliography{refs.bib}

\end{document}